\documentclass[reqno,a4paper]{amsart}
\usepackage[english,activeacute]{babel}
\usepackage[foot]{amsaddr}
\usepackage{amssymb,amsmath,amsthm,amsfonts,mathrsfs,scalefnt,graphicx,graphics,color,hyperref}
\usepackage[font=footnotesize]{caption}
\usepackage[margin=1.05in]{geometry}
\usepackage{lmodern}
\usepackage{textcomp}
\usepackage[on]{auto-pst-pdf}
\usepackage{pst-tree}
\usepackage{pst-all}
\usepackage{tabularx}
\usepackage[T1]{fontenc}
\usepackage{url}  
\usepackage[latin1]{inputenc}
\usepackage{etoolbox}
\usepackage{xcolor,cite}
\usepackage{soul}

\makeatother


\patchcmd{\section}{\scshape}{\bfseries}{}{}
\makeatletter
\renewcommand{\@secnumfont}{\bfseries}
\makeatother
\textwidth 160mm
\textheight 240mm
\topmargin       -10mm
\evensidemargin  -1mm
\oddsidemargin   -1mm
\parindent 0pt

\setlength{\captionmargin}{5pt}
\theoremstyle{plain}
\newtheorem{theorem}{Theorem}[section]
\newtheorem{lemma}[theorem]{Lemma}
\newtheorem{proposition}[theorem]{Proposition}
\newtheorem{corollary}[theorem]{Corollary}

\newtheorem{remark}{Remark}
\numberwithin{equation}{section}
\numberwithin{remark}{section}

\def\Xint#1{\mathchoice
   {\XXint\displaystyle\textstyle{#1}}%
   {\XXint\textstyle\scriptstyle{#1}}%
   {\XXint\scriptstyle\scriptscriptstyle{#1}}%
   {\XXint\scriptscriptstyle\scriptscriptstyle{#1}}%
   \!\int}
\def\XXint#1#2#3{{\setbox0=\hbox{$#1{#2#3}{\int}$}
     \vcenter{\hbox{$#2#3$}}\kern-.5\wd0}}
\def\vp{\Xint=}
\def\vp{\Xint-}

\def\Xint#1{\mathchoice
   {\XXint\displaystyle\textstyle{#1}}%
   {\XXint\textstyle\scriptstyle{#1}}%
   {\XXint\scriptstyle\scriptscriptstyle{#1}}%
   {\XXint\scriptstyle\scriptscriptstyle\scriptscriptstyle{#1}}%
   \!\int}
\def\XXint#1#2#3{{\setbox0=\hbox{$#1{#2#3}{\int}$}
     \vcenter{\hbox{$#2#3$}}\kern-.5\wd0}}

\newmuskip\pFqmuskip
\newcommand*\pFq[6][8]{%
\begingroup%
\pFqmuskip=#1mu\relax%
\mathcode`\,=\string"8000
\begingroup\lccode`\~=`\,
\lowercase{\endgroup\let~}\pFqcomma
{}_{#2}F_{#3}{\left[\genfrac..{0pt}{}{#4}{#5}\Big\arrowvert#6\right]}
\endgroup
}

\newcommand{\pFqcomma}{\mskip\pFqmuskip}
\newcommand{\hf}  {{{}_2F_1}}
\newcommand{\chf}  {{{}_1F_1}}
\newcommand{\HF}  {{{}_3F_2}}
\newcommand{\Cb}  {{\mathbb C}}

\newcommand{\Nb}  {{\mathbb N}}

\newcommand{\Rb}  {{\mathbb R}}

\newcommand{\Zb}  {{\mathbb Z}}
\newcommand{\As} {{\mathcal A}}

\newcommand{\Cs} {{\mathcal C}}

\newcommand{\Ms} {{\mathcal M}}

\newcommand{\Ss} {{\mathcal S}}


\title[]{}
%


\begin{document}

\section*{ON THE STATIONARY DISTRIBUTION OF THE BLOCK COUNTING PROCESS FOR POPULATION MODELS WITH MUTATION AND SELECTION}
   {\sc F. Cordero}\footnote{Faculty of Technology, Universit\"{a}t Bielefeld, Universit\"{a}tsstrasse 25, 33615 Bielefeld, Germany. E-mail: fcordero@techfak.uni-bielefeld.de}, {\sc M. M\"ohle}\footnote{Mathematisches Institut, Eberhard Karls Universit\"at T\"ubingen,
   Auf der Morgenstelle 10, 72076 T\"ubingen, Germany, E-mail address: martin.moehle@uni-tuebingen.de}
   \begin{center}
 \today
\end{center}
\begin{abstract}
We consider two population models subject to the evolutionary forces of selection and mutation, the Moran model and the $\Lambda$-Wright--Fisher model. In such models the block counting process traces back the number of potential ancestors of a sample of the population at present. Under some conditions the block counting process is positive recurrent and its stationary distribution is described via a linear system of equations. In this work, we first characterise the measures $\Lambda$ leading to a geometric stationary distribution, the Bolthausen--Sznitman model being the most prominent example having this feature. Next, we solve the linear system of equations corresponding to the Moran model. For the $\Lambda$-Wright--Fisher model we show that the probability generating function associated to the stationary distribution of the block counting process satisfies an integro differential equation. We solve the latter for the Kingman model and the star-shaped model. 
 \vspace{2mm}

   \noindent Keywords: Kingman coalescent; star-shaped coalescent; Bolthausen--Sznitman coalescent; Wright--Fisher model; Moran model.
\vspace{2mm}

   \noindent 2010 Mathematics Subject Classification:
            Primary 45J05; 60J27; 97K60 Secondary 44A60; 92D15
\end{abstract}
\maketitle
\section{Introduction}\label{s1}
There is a large variety of population models describing the interplay between mutation and selection forwards in time. Understanding the underlying ancestral processes  
is a major challenge in population genetics. In neutral population models, ancestries are typically described by coalescent processes. The most important example is Kingman's coalescent \cite{Ki82}, which only allows for mergers of pairs of ancestral lineages. Kingman shows in \cite{Ki82} that this process arises as the limit of the genealogies of the neutral Moran and Wright--Fisher models when the population size tends to infinity. Convergence to the Kingman coalescent holds for a wide class of neutral population models (see \cite{MM00}). However, in some situations Kingman's coalescent is not a suitable approximation, which leads to consider coalescent processes that allow for multiple mergers. Exchangeable coalescents with multiple mergers (but without simultaneous multiple mergers) are characterised by a finite measure $\Lambda$ on $[0,1]$, and therefore called $\Lambda$-coalescents. They were introduced in \cite{DK99}, \cite{Pit99} and \cite{Sa99}, and have been the subject of extensive research in the past decades (see, e.g., \cite{Pit06,B09}). The case $\Lambda=\delta_{0}$ corresponds to Kingman's coalescent, the case $\Lambda=\delta_{1}$ to the star-shaped coalescent, and the case where $\Lambda$ is the uniform distribution on $(0,1)$ to the Bolthausen--Sznitman coalescent \cite{BS98}. The $\Lambda$-coalescent specifies the genealogy of a sample from a forwards in time population model introduced in \cite{BLG03} and commonly referred as the $\Lambda$-Fleming--Viot process (see also \cite{BB09} for a review on the topic). Moreover, the block counting process of the $\Lambda$-coalescent is moment dual to the type-frequency process of the $\Lambda$-Fleming--Viot process (see, e.g., \cite{E11}). Formulas for the infinitesimal rates of the block counting process are provided in \cite{He15} for the $\Lambda$-coalescents and in \cite{GM16} for the full class of exchangeable coalescents.

Generalisations of the $\Lambda$-Fleming--Viot processes that incorporate selection and mutation are studied in \cite{EGT10} (the Kingman case is treated in \cite{EG09}). In particular, under mild conditions assuring the existence of a stationary distribution for the type-frequency process, a moment duality between the latter and a process describing the genealogy of a typed sample of the population is established there. In this paper, we focus on $\Lambda$-Fleming--Viot processes with two types, parent independent mutation and genic selection, and we refer to them as $\Lambda$-Wright--Fisher models with mutation and selection. In this context, genealogies can be alternatively described by means of the $\Lambda$-ancestral selection graph ($\Lambda$-ASG); originally constructed in \cite{KroNe97, NeKro97} for the Wright--Fisher diffusion, and extended to the $\Lambda$-Wright--Fisher model in \cite{BLW16} (see also \cite{GS18} for $\Lambda$-Wright--Fisher models with frequency dependent selection). In the $\Lambda$-ASG the coalescence mechanism is given by the $\Lambda$-coalescent. Additionally, selection introduces binary branching at constant rate per ancestral line. This approach differs from the one in \cite{EGT10} in that the ASG describes the ancestries of an untyped sample of the population. In absence of mutations, the block counting process of the $\Lambda$-ASG is in moment duality with the type-frequency process in the $\Lambda$-Wright--Fisher model (see, e.g., \cite{GS18}). In presence of mutations, two relatives of the $\Lambda$-ASG permit to resolve mutation events on the spot and encode relevant information of the model: the $\Lambda$-killed ASG and the $\Lambda$-pruned lookdown ASG (the three ancestral processes coincide in absence of mutations). The killed ASG is reminiscent to the coalescent with killing \cite[Chap.~1.3.1]{D08} and it was introduced in \cite{BW18} for the Wright--Fisher diffusion model with selection and mutation. Its construction generalises in a natural way to $\Lambda$-Wright--Fisher models. The killed ASG helps to determine weather or not all the individuals in a sample of the population at present are unfit and is related to the type-frequency process via a moment duality. The pruned lookdown ASG in turn helps to determine the type of the common ancestor of a given sample of the population. It was introduced in \cite{LKBW15} for the Wright--Fisher diffusion model and extended to the $\Lambda$-Wright--Fisher model in \cite{BLW16} and to the Moran model and its deterministic limit in \cite{FC17} (see also \cite{BCH17}). Moreover, the block counting process of the pruned lookdown ASG is Siegmund dual to the fixation line process (see \cite{He15,GM16} for the neutral case and \cite{BLW16} for the general case). In what follows, if not explicitly mentioned, whenever we talk of the block counting process we refer to the block counting process of the $\Lambda$-pruned lookdown ASG.

In the Wright--Fisher diffusion and the Moran model the block counting process is positive recurrent for any strictly positive selection parameter. For the $\Lambda$-Wright--Fisher model, there is a critical value $\sigma_\Lambda$ such that the block counting process is positive recurrent for any selection parameter $\sigma\in(0,\sigma_\Lambda)$ (see \cite{Fo13} and the discussion in \cite[p.~4]{BLW16}). The constant $\sigma_\Lambda$ was introduced in \cite{HeMo12} as $\lim_{k\to\infty}\log{k}/E_k[T_1]$, where $T_1$ is the absorption time of the block counting process of the $\Lambda$-coalescent. Moreover, in absence of mutations, fixation of the fit type is certain if and only if $\sigma\geq \sigma_\Lambda$ (see \cite{DEP11,DEP12} for the Eldon--Wakeley coalescent, \cite{Fo13,G14} for two independent proofs in the general  case and \cite{GS18} for models with frequency dependent selection). In this paper, we are interested in the stationary tail probabilities of the block counting process. In the Wright--Fisher diffusion model they are characterised via a two-step recurrence relation (see \cite{LKBW15}), which is referred in the literature as Fearnhead's recursion (see also \cite{Fe02,Ta07}). Linear systems of equations that characterise the stationary tail probabilities of the block counting process are provided in \cite{BLW16} for the $\Lambda$-Wright--Fisher model and in \cite{FC17} for the Moran model. We refer to these linear systems as Fearnhead-type recursions. The stationary moments of the type-frequency process are characterised via similar linear systems (see \cite{EGT10} and Corollary \eqref{morecur}).

On the basis of the Fearnhead(-type) recursions, we aim to identify the measures $\Lambda$ such that the stationary distribution of the block counting process is geometrically distributed, the measure $\Lambda\equiv 0$ being known to have this feature \cite{BCH17}. Next, we aim to provide explicit expressions for the stationary distribution of the block counting process for the Moran model and some particular cases of the $\Lambda$-model, namely, the Kingman model, the star-shaped model and the Bolthausen--Sznitman model. For the general $\Lambda$-model we will characterise the probability generating function of the stationary distribution of the block counting process via an integro differential equation. 

The paper is organised as follows. In Section \ref{s2} we briefly describe the Moran model and the $\Lambda$-Wright--Fisher model with selection and mutation together with their corresponding block counting process. For both models we recall the characterisation of the stationary tail probabilities of the block counting process via the Fearnhead-type recursions. A similar system of equations for the moments of the asymptotic proportion of unfit individuals is provided via a moment duality. In Section \ref{s3} we characterise the measures $\Lambda$ leading to a geometric distribution and we provide a class of measures having this feature. The Bolthausen--Sznitman coalescent is the most prominent example (see Corollary \ref{geomlawBS}) and it is the only $\beta(a,b)$-model having a geometric stationary distribution (see Remark \ref{remgeo}). In Section \ref{s4} we treat the Moran model with mutation and selection. We obtain formulas for the probability mass function, the probability generating function, the mean and the factorial moments of the stationary distribution of the block counting process. In Section \ref{s5} we characterise the probability generating function of the stationary distribution of the block counting process for the $\Lambda$-Wright--Fisher model by an integro differential equation. In Section \ref{s6} we obtain formulas for the probability mass function, the probability generating function, the mean and the factorial, moments of the stationary distribution of the block counting process in the Wright--Fisher model. Section \ref{s7} treats the star-shaped model with mutation and selection. In Section \ref{s8} we come back to the Bolthausen--Sznitman model. We first relate the geometric law with the results obtained in Section \ref{s5} for the general $\Lambda$-model. In addition, we obtain an explicit expression for the generating function of the moments of the asymptotic frequency of unfit individuals. In Section \ref{s9} we comment on the $\beta(3,1)$-model, which provides another example for which the Fearnhead-type recursions can be solved explicitly.

\section{Preliminaries: Fearnhead-type recursions}\label{s2}
\subsection{The block-counting process of the pruned lookdown ASG}
In the two-types Moran model of size $N>1$ each individual is characterised by a type $i\in\{0,1\}$. If an individual reproduces, its single offspring  inherits the parent's type and replaces a uniformly chosen individual, possibly its own parent. The replaced individual dies, keeping the size of the population constant. Individuals of type $1$ reproduce at rate $1$, whereas individuals of type $0$ reproduce at rate $1+s$, $s>0$. Mutation occurs independently of reproduction. Moreover, each individual mutates to type $j\in\{0,1\}$ at rate $u_j\geq 0$. Hence, the total rate of mutation per individual is $u:=u_0+u_1$. 
Relevant information of the model is given by the continuous-time Markov chain $X^N:=(X^N_t)_{t\geq 0}$ describing the evolution in time of the number of type-$0$ individuals in the population. Its generator is given by
$$\As_{X^N}f(k):=\lambda_k^N (f(k+1)-f(k))+\mu_k^N (f(k-1)-f(k)),\quad k\in [N]_0:=\{0,\ldots,N\},$$
where $\lambda_k^N:=k(N-k)(1+s)/N+(N-k)u_0$ and $\mu_k^N:=k(N-k)/N+ ku_1$. The asymptotic properties of $X^N$ are well known: (1) if $u_0,u_1>0$, $X^N$ admits a unique stationary distribution, which is given by $\pi_N(k):=C_N\prod_{i=1}^k \lambda_{i-1}^N/\mu_i^N$, $k\in [N]_0$, where $C_N$ is a normalising constant, (2) if $u_0=0$ and $u_1>0$, $X^N$ fixates almost surely at $0$, (3) if $u_0>0$ and $u_1=0$, $X^N$ fixates almost surely at $N$, (4) if $u_0=u_1=0$, conditionally on $X_0^N=k$, $X^N$ fixates at $N$ with probability $((1+s)^{N}-(1+s)^{N-k})/((1+s)^N-1)$  (c.f. \cite[Chap.~6.1.1]{D08}).

Backward in time potential ancestors of a sample of the population are traced back with the help of the ASG. An appropriate dynamical ordering and pruning of its lines leads to the pruned lookdown ASG, which in turn permits to characterise the common ancestor type distribution (see \cite{FC17}). The block counting process $L^N:=(L^N_t)_{t\geq 0}$ of the pruned lookdown ASG describes the number of potential ancestors of a given sample of individuals. It is a continuous time Markov chain with state space $[N]:=\{1,\ldots,N\}$ and infinitesimal rates
\begin{equation}\label{rmm}
q_{N}(i,j):=\frac{i\,(N-i)\,s}{N}\, 1_{\{j=i+1\}}+\left( \frac{i(i-1)}{N} +(i-1) \,u_1+u_0\right)1_{\{j=i-1\}} +u_0\,1_{\{j\leq i-2\}},\quad i,j\in[N].
\end{equation}
The process $L^N$ is irreducible, and has hence a unique stationary distribution $(p_n^N)_{n\in[N]}$. Let $L_\infty^N$ be a random variable distributed according to $(p_n^N)_{n\in[N]}$. The stationary tail probabilities $a_n^N:=P(L_\infty^N>n)$, $n\in[N-1]_0$, are characterised by the recurrence relation (see \cite[Prop.~4.7]{FC17})
\begin{equation}\label{frmm}
\left(\frac{n}{N}+u_1\right)a_n^N=\left(\frac{n}{N}+\frac{N-n+1}{N}\,s+u\right)a_{n-1}^N-\frac{N-n+1}{N}\,s\,a_{n-2}^N,\quad n\in\{2,\ldots,N-1\},
\end{equation}
together with the boundary conditions
\begin{equation}\label{bcmm}
 a_0^N=1\quad\textrm{and}\quad\left(1+u+\frac{s}{N}\right)a_{N-1}^N=\frac{s}{N}a_{N-2}^N.
\end{equation} 
Depending on the strengths of selection and mutation two standard limits of large populations arise in the Moran model. The first one assumes that the parameters of selection and mutation remain constant with respect to the size of the population (strong selection - strong mutation). In this case, a special case of the dynamical law of large numbers of Kurtz \cite[Thm.~3.1]{K70} permits to show that the proportion of type-$0$ individuals converges to the solution of the haploid mutation-selection equation (see \cite{FC17b} for more details)
 \begin{equation}
z'(t)=sz(t)(1-z(t))+u_0(1-z(t))-u_1 z(t), \quad t\geq 0,\label{deteq}
\end{equation} 
This is a classical model in population genetics and goes back to Crow and Kimura \cite{CK56}. However, the (random) ancestral structures inherent to this model have been only recently investigated (see \cite{FC17, BCH17}). Note that the same limit is obtained in a moderate selection - moderate mutation regime of the Moran model, i.e. if $s$, $u_0$ and $u_1$ are of order $N^{-\alpha}$ for some $\alpha\in(0,1)$ and time is rescaled by a factor $N^{\alpha}$. This follows by a straightforward Taylor expansion of the generator of the rescaled process and standard convergence results for continuous time Markov chains (e.g. \cite[Thms.~1.6.1,~4.2.11~and~8.2.1]{EK86}).

The other asymptotic regime arises when $s\sim \sigma/N$, $u_0\sim \theta_0/N$ and $u_1\sim \theta_1/N$, for some $\sigma,\theta_1,\theta_0\geq 0$ (weak selection - weak mutation). In this case, rescaling time by $N$, the proportion of fit individuals converges to the Wright--Fisher diffusion process with infinitesimal generator
$$\As_Y f(x):=x(1-x)f{''}(x)+ \left[\sigma x(1-x)+\theta_0(1-x)-\theta_1x\right]f{'}(x),\quad f\in\Cs^2([0,1]),\,x\in[0,1].$$
These two infinite population models are particular cases of the $\Lambda$-Wright--Fisher model. 
The $\Lambda$-Wright--Fisher model describes a two-types infinite population evolving according to random reproduction, two-way mutation and fecundity selection. The parameters of the model are (1) a finite measure $\Lambda$ on $[0,1]$ modelling the neutral reproduction, (2) the selective advantage $\sigma\in\Rb_+:=[0,\infty)$ and (3) the mutation rates $\theta_0,\theta_1\in \Rb_+$. The process $X$ describing the frequency of type $0$ in the population has the generator
\begin{align*}
A_{X} f(x)&:=\int_{(0,1]}\frac{\Lambda({\rm d}z)}{z^2}\left[x(f(x+z(1-x))-f(x))+(1-x)(f(x-zx)-f(x))\right]\\
&+\frac{\Lambda(\{0\})}{2}x(1-x)\,f''(x)+\left[\sigma x(1-x)+\theta_0(1-x)-\theta_1x\right]\,f'(x),\quad f\in\Cs^2([0,1]),\,x\in[0,1].
\end{align*}
\begin{remark}
 The case $\Lambda=2\delta_{0}$, where $\delta_0$ is the Dirac mass at $0$, corresponds to the Wright--Fisher diffusion model.
\end{remark}
\begin{remark}[Crow--Kimura and seed bank models]
 The degenerate case $\Lambda\equiv 0$, meaning that there is no neutral reproduction, corresponds to the Crow--Kimura model, i.e. the solution of the ODE \eqref{deteq} with $s=\sigma$, $u_0=\theta_0$ and $u_1=\theta_1$. One may think of a population of seeds which do not reproduce, but forces like mutation and (viability) selection may still act on the seeds since they are exposed to heat, chemicals or radiation. The study of seed banks models is an active area of research and goes back to the seminal paper of Kaj. et al. \cite{KKL01}. In the case $u=\theta=0$, the ODE \eqref{deteq} can be recovered from the seed bank model with geometric germination rate and weak selection \cite[Eq.~(5)]{KMTZ17} by assuming that the selection intensity is proportional to the germination rate and letting the latter go to $0$. In the case $s=\sigma=0$,  the ODE \eqref{deteq} corresponds to the seed bank component in \cite[Eq.~(1)]{BBGW18} when the relative seed bank size $K$ tends to zero.
\end{remark}

In \cite{BLW16} the $\Lambda$-pruned lookdown ASG was defined in order to trace back ancestries in the $\Lambda$-Wright--Fisher model. The way ancestral lines are pruned in this process is tailored to compute the common ancestor type distribution (see Remark \ref{catd}). The corresponding block counting process $L^\Lambda:=(L^\Lambda_t)_{t\geq 0}$ is the continuous time Markov chain with state space $\Nb:=\{1,2,\ldots\}$ and infinitesimal generator
\begin{align*}
 G_{L^\Lambda} g(k):&=\sum\limits_{\ell=1}^{k-1}\binom{k}{k-\ell+1}\lambda_{k,k-\ell+1}[g(\ell)-g(k)]+k\sigma[g(k+1)-g(k)]\\
 &+(k-1)\theta_1[g(k-1)-g(k)]+\sum\limits_{\ell=1}^{k-1}\theta_0[g(k-\ell)-g(k)],\quad g:\Nb\to\Rb,\, k\in\Nb,
\end{align*}
where $\lambda_{k,j}:=\int_{[0,1]} x^j(1-x)^{k-j}x^{-2}\Lambda({\rm d}x)$, $2\leq j\leq k$. 

Let $\sigma_\Lambda:=-\int_{[0,1]} \log(1-x)\,\frac{\Lambda({\rm d}x)}{x^2}$. In \cite{Fo13} it is shown that if $\sigma\in(0,\sigma_\Lambda)$ and $\theta=0$, then the process $L^\Lambda$ is positive recurrent. The next result improves this condition for $\theta:=\theta_0+\theta_1>0$.
\begin{lemma}\label{posrec}
Assume that $\sigma>0$. If $\theta_0>0$ or $\sigma<\sigma_\Lambda+\theta_1$, then the process $L^\Lambda$ is positive recurrent.  
\end{lemma}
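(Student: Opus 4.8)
The plan is to exhibit a suitable Lyapunov (Foster--Lyapunov) function for the Markov chain $L^\Lambda$ and verify the standard drift condition for positive recurrence: there exist a finite set $F\subset\Nb$, a constant $\varepsilon>0$ and a function $V:\Nb\to\Rb_+$ with $V(k)\to\infty$ such that $G_{L^\Lambda}V(k)\le -\varepsilon$ for all $k\notin F$. The natural candidate is $V(k)=\log k$ (or a smooth surrogate of it), since the critical constant $\sigma_\Lambda$ was introduced in \cite{HeMo12} precisely as $\lim_{k\to\infty}\log k/E_k[T_1]$, so $\log k$ is the ``right'' scale on which the coalescent part of the generator produces a drift of order $-\sigma_\Lambda$. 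First I would compute $G_{L^\Lambda}V(k)$ term by term for $V(k)=\log k$: the selection term $k\sigma[\log(k+1)-\log k]=k\sigma\log(1+1/k)\to\sigma$; the $\theta_1$ term $(k-1)\theta_1[\log(k-1)-\log k]\to-\theta_1$; the $\theta_0$ term $\sum_{\ell=1}^{k-1}\theta_0[\log(k-\ell)-\log k]$, which is negative and, crucially, diverges to $-\infty$ as $k\to\infty$ whenever $\theta_0>0$ (it behaves like $-\theta_0 k\log k\cdot(1+o(1))$ up to the obvious normalisation subtlety — more carefully, $\sum_{\ell=1}^{k-1}\log((k-\ell)/k)=\log((k-1)!/k^{k-1})\sim -k$ by Stirling, so this term is of order $-\theta_0 k$); and the coalescent term $\sum_{\ell=1}^{k-1}\binom{k}{k-\ell+1}\lambda_{k,k-\ell+1}[\log\ell-\log k]$, which is $\le 0$ and whose limit I would identify, via the known asymptotics of the total coalescence rate and a dominated-convergence argument against $\Lambda(\mathrm dx)/x^2$, as $-\sigma_\Lambda$.

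With these asymptotics in hand the two cases are immediate. If $\theta_0>0$, then $G_{L^\Lambda}V(k)$ is dominated by the $\theta_0$-term, which tends to $-\infty$, so for $k$ large enough $G_{L^\Lambda}V(k)\le -1$ regardless of the size of $\sigma$; this is the content of the first alternative and it requires essentially no information about $\Lambda$. If instead $\theta_0=0$ but $\sigma<\sigma_\Lambda+\theta_1$, then $\limsup_{k\to\infty}G_{L^\Lambda}V(k)\le \sigma-\theta_1-\sigma_\Lambda<0$ (using that the coalescent contribution has $\limsup\le -\sigma_\Lambda$), so again $G_{L^\Lambda}V(k)\le -\varepsilon$ outside a finite set. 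Since $L^\Lambda$ is irreducible on $\Nb$ (selection moves up by one, mutation moves down), the Foster--Lyapunov criterion then yields positive recurrence, and the existence of a (unique) stationary distribution follows. A small technical point is that $\log k$ is only defined for $k\ge 1$ and is $0$ at $k=1$; one either works with $V(k)=\log(k+1)$ or simply notes the drift condition need only hold off a finite set, so the behaviour near $k=1$ is irrelevant.

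The main obstacle is the rigorous treatment of the coalescent term $\sum_{\ell=1}^{k-1}\binom{k}{k-\ell+1}\lambda_{k,k-\ell+1}[\log\ell-\log k]$. One must show that its $\limsup$ as $k\to\infty$ is at most $-\sigma_\Lambda$ (an exact limit is not needed). The cleanest route is to interchange the sum over $\ell$ with the integral defining $\lambda_{k,j}$: writing the coalescent rates in terms of $\Lambda(\mathrm dx)/x^2$, the jump from $k$ to $\ell$ (i.e. a merger of $k-\ell+1$ of the $k$ lines) has, under the $\Lambda$-coalescent, the interpretation that each of the $k$ lines is independently marked with probability $x$ and all marked lines merge; hence the expected value of $\log(\text{resulting number of lines})-\log k$ under a single reproduction event of size $x$ is $\int_{[0,1]} \big(E[\log(\max(1,\mathrm{Bin}(k,1-x)+1))]-\log k\big)\Lambda(\mathrm dx)/x^2$, and by the law of large numbers $\mathrm{Bin}(k,1-x)/k\to 1-x$, so the bracket converges pointwise to $\log(1-x)$ (for $x<1$) and the whole expression converges, by monotone/dominated convergence justified by $-\log(1-x)\in L^1(\Lambda(\mathrm dx)/x^2)$ when $\sigma_\Lambda<\infty$, to $\int_{[0,1]}\log(1-x)\Lambda(\mathrm dx)/x^2=-\sigma_\Lambda$. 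Handling the atom of $\Lambda$ at $1$ (where $\log(1-x)=-\infty$, consistent with $\sigma_\Lambda=\infty$, so that $L^\Lambda$ is then automatically positive recurrent for every $\sigma$) and the Kingman part $\Lambda(\{0\})$ (which contributes $0$ to $\sigma_\Lambda$ and, after the $\binom{k}{2}$ rate, a vanishing contribution to the $\log$-drift) are the routine but slightly delicate pieces; everything else is elementary.
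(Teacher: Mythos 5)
Your plan is viable and is close in spirit to the paper's own argument: the paper also runs a drift argument with a logarithmic Lyapunov function, following Foucart. The differences are in execution. The paper treats $\theta_0>0$ by the one-line observation that from any state the chain jumps directly to $1$ at rate $\theta_0$ (so $T_2$ is stochastically dominated by an $\mathrm{Exp}(\theta_0)$ time), treats $\theta_1>\sigma$ by comparison with a positive recurrent birth--death chain, and in the remaining case uses the function $f(\ell)=\sum_{k=2}^{\ell}\frac{k}{\delta(k)}\log\frac{k}{k-1}$ (asymptotically $\log(\ell)/\sigma_\Lambda$, via $\delta(n)/n\to\sigma_\Lambda$ from \cite{HeMo12}), importing the drift bound $G_{L^\Lambda}f(\ell)\le -1+(\sigma-\theta_1)\ell/\delta(\ell)$ from \cite{Fo13} and then arguing by Dynkin's formula, optional stopping with truncations $f_N$, and non-explosion (domination by a Yule process with rate $\sigma$) to get $E_n[T_{n_0}]\le Cf(n)<\infty$. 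You instead take $V(k)=\log k$, compute the drift of each generator term directly, and invoke a packaged Foster--Lyapunov theorem; what this buys is a self-contained computation that handles all parameter regimes uniformly, at the price of having to verify yourself the hypotheses (well-definedness of $G_{L^\Lambda}V$, irreducibility, and non-explosion, the last of which you should state explicitly -- the Yule domination used in the paper, or the fact that $V$ is norm-like and $G_{L^\Lambda}V$ is bounded above, does the job).

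Two concrete points need repair, though neither threatens the strategy. First, your paintbox rewriting of the coalescent drift as $\int_{[0,1]}\bigl(E[\log(\mathrm{Bin}(k,1-x)+1)]-\log k\bigr)\Lambda(\mathrm dx)/x^2$ omits the restriction to reproduction events in which at least two of the $k$ lines are marked; the generator only contains mergers of $j\ge 2$ lines. The omitted events contribute the positive term $(1-x)^k\log(1+1/k)/x^2$, whose integral against $\Lambda_0$ diverges whenever $\int x^{-2}\Lambda_0(\mathrm dx)=\infty$ (e.g.\ the Bolthausen--Sznitman measure), so the formula as written is $+\infty$ and cannot be used to bound the true (finite, nonpositive) drift. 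The fix is immediate: work with $E[-\log(1-(B_{k,x}-1)/k);\,B_{k,x}\ge 2]$, $B_{k,x}\sim\mathrm{Bin}(k,x)$, and note that Fatou's lemma (twice, using $P(B_{k,x}\ge2)\to1$ and $B_{k,x}/k\to x$) gives $\liminf_k$ of the integral $\ge\int -\log(1-x)\Lambda_0(\mathrm dx)/x^2$; no dominating function is needed, and indeed your appeal to dominated convergence with dominator $-\log(1-x)$ is not justified as stated -- but, as you say, only the one-sided bound is required. Second, the Kingman component does not give a vanishing contribution: it contributes $\binom{k}{2}m_0\log(1-1/k)\sim -m_0(k-1)/2\to-\infty$ to the log-drift, and an atom at $0$ forces $\sigma_\Lambda=\infty$ (the integrand $-\log(1-x)/x^2$ blows up at $0$), consistent with coming down from infinity. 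Both errors are in the harmless direction (they make the drift more negative), but the bookkeeping should be corrected.
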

\begin{proof}
 If $\sigma_\Lambda=\infty$ the result is already covered in \cite{Fo13}. In the case $\sigma_\Lambda<\infty$, we follow the proof of \cite[Lemma~2.4]{Fo13}. Let $T_k:=\inf\{s\geq 0: L_s^\Lambda<k\}$, $k\in\Nb$. We will show that there exists $n_0\in\Nb$, such that for all $n\geq n_0$, $E_n[T_{n_0}]<\infty$. If $\theta_0>0$ and $L_0^\Lambda=n\geq 2$, $T_2$ is dominated by an exponential time with parameter $\theta_0$, and the result follows in this case. Now we assume that $\theta_0=0$ and that $\sigma-\theta_1<\sigma_\Lambda$. If $\theta_1>\sigma$,  $L^\Lambda$ is dominated by a birth and death process with birth rate $\sigma$ and death rate $\theta_1$, which is positive recurrent. Hence $L^\Lambda$ is positive recurrent. At last we consider the case where $\sigma\geq \theta_1$. We define for $n\geq 2$ and $\ell\in\Nb$
 $$\delta(n):=-n\int_{[0,1]}\log\left(1-\frac{1}{n}(nx-1+(1-x)^n)\right)\frac{\Lambda({\rm d}x)}{x^2}\quad\textrm{and}\quad f(\ell):=\sum_{k=2}^\ell \frac{k}{\delta(k)}\log\left(\frac{k}{k-1}\right).$$
 A slight modification of the proof of \cite[Lemma~2.3]{Fo13} permits to show that
 $$G_{L^\Lambda} f(\ell)\leq -1+(\sigma-\theta_1)\frac{\ell}{\delta(\ell)},\quad \ell\geq 2.$$
Set $f_N(\ell):=f(\ell)1_{\{\ell\leq N+1\}}$, $N\in\Nb$. By Dynkin's formula the process $(f_N(L_t^\Lambda)-\int_0^t G_{L^\Lambda} f_N(L_s^\Lambda){\rm d}s)_{t\geq 0}$ is a martingale. Since $\lim_{n\to\infty}\delta(n)/n=\sigma_\Lambda$ (see \cite[Remark~4.3]{HeMo12}), we infer that for any $\epsilon>0$ there is $n_0\in\Nb$ such that for all $\ell\geq n_0$, $\ell/\delta(\ell)\leq \sigma_\Lambda^{-1}+\epsilon$. Consider the stopping time $S_N:=\inf\{s\geq 0: L_s^\Lambda>N\}$. Applying the optional stopping theorem and using that $G_{L^\Lambda} f_N(\ell)=G_{L^\Lambda} f(\ell)$ for $\ell\leq N$ yields for $n_0\leq n\leq N$
 \begin{align*}
  E_n[f_N(L^\Lambda_{T_{n_0}\wedge S_N \wedge k})]&=f_N(n) +\int_0^{T_{n_0}\wedge S_N \wedge k} G_{L^\Lambda} f_N(L_s^\Lambda) {\rm d}s\\
  &\leq f_N(n) +\int_0^{T_{n_0}\wedge S_N \wedge k} \left(-1+(\sigma-\theta_1)\frac{L_s^\Lambda}{\delta(L_s^\Lambda)}\right) {\rm d}s\\
  &\leq f_N(n) +\left(-1+\frac{(\sigma-\theta_1)}{\sigma_\Lambda}+\epsilon(\sigma-\theta_1)\right)E_n\left[T_{n_0}\wedge S_N \wedge k\right].
 \end{align*}
 Therefore,
 $$\left(1-\frac{\sigma-\theta_1}{\sigma_\Lambda}-\epsilon(\sigma-\theta_1)\right)E_n[T_{n_0}\wedge S_N\wedge k]\leq  f_N(n)- E_n[f_N(L^\Lambda_{T_{n_0}\wedge S_N \wedge k})]\leq f_N(n).$$
We choose $\epsilon>0$ such that $1-(\sigma-\theta_1)(\sigma_\Lambda^{-1}+\epsilon)>0$. Since the process $L^\Lambda$ is non-explosive (it is dominated by a Yule process with parameter $\sigma$), $S_N\rightarrow\infty$ as $N\to\infty$. Letting $N\to\infty$ in the previous inequality yields, for all $n\geq n_0$,
 $$\left(1-\frac{\sigma-\theta_1}{\sigma_\Lambda}-\epsilon(\sigma-\theta_1)\right)E_n[T_{n_0}\wedge k]\leq f(n).$$
 Letting $k\to \infty$ yields $E_n[T_{n_0}]\leq f(n)$. The proof is achieved.
 \end{proof}
\begin{remark}
 From the results in \cite{HeMo12} it follows that if the $\Lambda$-coalescent comes down from infinity, then $\sigma_\Lambda=\infty$. Therefore, for measures $\Lambda$ having this property, the block-counting process is always positive recurrent.
\end{remark}

Under the assumptions of Lemma \ref{posrec}, the process $L^\Lambda$ admits a unique stationary measure $(p_n^\Lambda)_{n\in\Nb}$. We denote by $L_\infty^\Lambda$ a random variable distributed according to $(p_n^\Lambda)_{n\in\Nb}$. From \cite[Thm.~2.4]{BLW16} we know that the sequence of stationary tail probabilities $a_n^\Lambda:=P(L_\infty^\Lambda>n)$, $n\in\Nb_0:=\Nb\cup\{0\}$, is the unique solution of the system of equations
\begin{equation}\label{frlm}
 \frac{1}{n}\sum\limits_{k=2}^\infty\binom{k+n-1}{k}\lambda_{k+n,k}(a_n^\Lambda-a_{k+n-1}^\Lambda)+ \left(\frac{m_1}{n}+\sigma+\theta\right)a_n^\Lambda
 =\sigma a_{n-1}^\Lambda+\theta_1 a_{n+1}^\Lambda,\quad n\in\Nb,
 \end{equation}
with $m_1:=\Lambda(\{1\})$, together with the boundary conditions $a_0^\Lambda=1$ and $\lim_{n\rightarrow\infty} a_{n}^\Lambda=0.$ For $\Lambda=2\delta_{0}$, \eqref{frlm} reduces to Fearnhead's recursion (see \cite{Fe02,Ta07,LKBW15}). For this reason, we refer to \eqref{frmm} and to \eqref{frlm} as Fearnhead-type recursions. 
\begin{remark}\label{catd}
Assume that $L^\Lambda$ is positive recurrent. Denote by $J_t$ the type of the ancestor at time $0$ of an individual randomly sampled from the population at time $t$. Then \cite[Thm.~2.4]{BLW16} also shows that
$$h_\Lambda(x):=\lim_{t\to\infty} P(J_t=0\mid X_0=x)=\sum_{n=0}^\infty x(1-x)^n a_n^\Lambda,\quad x\in[0,1].$$
\end{remark}
\subsection{Moment duality}
Under the assumption that $X$ has a stationary distribution (for example if $\theta_0,\theta_1>0$), the process $(X_t,1-X_t)_{t\geq 0}$ is in moment duality with a branching-coalescing process, following the typed ancestry of a given sample of the population (see \cite{EG09} for the Wright--Fisher diffusion model  and \cite{EGT10} for general $\Lambda$-Fleming--Viot models with seletion and mutation). Moment dualities can be also constructed, in an untyped manner, on the basis of the ASG. In absence of mutations, the block counting process of the ASG, which in this case coincides with $L^\Lambda$, is moment dual to the process $1-X$ (see, e.g., \cite{GS18}). In presence of mutations the moment duality between these two processes does not hold anymore. In \cite{BW18} the notion of killed ASG is introduced for the Wright--Fisher diffusion model and it is shown that this process is moment dual to $1-X$. This construction extends in a natural way to the $\Lambda$-Wright--Fisher model as follows. In addition to the neutral mechanism given by the$\Lambda$-coalescent, and the selective binary branching, every line is pruned at rate $\theta_1$ and the whole process is killed at the first beneficial mutation event. The corresponding block-counting process $(R_t)_{t\geq 0}$ has infinitesimal generator  
\begin{align*}
 G_{R} g(k):&=\sum\limits_{\ell=1}^{k-1}\binom{k}{k-\ell+1}\lambda_{k,k-\ell+1}[g(\ell)-g(k)]+k\sigma[g(k+1)-g(k)]\\
 &+k\theta_1[g(k-1)-g(k)]+k\theta_0 [g(\Delta)-g(k)],\quad g:\Nb\cup\{\Delta\}\to\Rb,\, k\in\Nb,
\end{align*}
where $\Delta$ is a cemetery point. To the best of the authors knowledge the following moment duality for the general two-way mutation selection $\Lambda$-Wright--Fisher model has not been stated in the literature so far. 
\begin{proposition}[Moment duality]\label{mdlwfm} For all $n\in\Nb_0$ and $x_0\in[0,1].$
\begin{equation}\label{eqmd}
 E_{x_0}[(1-X_t)^n]=E_n[(1-x_0)^{R_t}],\qquad t\geq 0
\end{equation}
 with the convention $y^\Delta=0$ for any $y\in[0,1]$.
\end{proposition}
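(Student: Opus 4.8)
The plan is to prove \eqref{eqmd} by the classical generator‑duality argument with duality function \(H\colon[0,1]\times(\Nb_0\cup\{\Delta\})\to[0,1]\) given by \(H(x,k):=(1-x)^k\) for \(k\in\Nb_0\) and \(H(x,\Delta):=0\), so that \eqref{eqmd} reads \(E_{x_0}[H(X_t,n)]=E_n[H(x_0,R_t)]\). Recall that \(X\) is the \([0,1]\)‑valued process generated by \(A_X\) and that polynomials lie in its domain. The crux is the pointwise infinitesimal identity
\[
 A_X H(\cdot,k)(x)=G_R H(x,\cdot)(k),\qquad x\in[0,1],\ k\in\Nb
\]
(the case \(k=0\) being trivial, since both sides of \eqref{eqmd} are then equal to \(1\)); it is the infinitesimal form of the sampling/moment duality, but I would verify it by a direct computation. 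Once it is established, a standard semigroup argument upgrades it to \eqref{eqmd}.

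For the infinitesimal identity, take \(f(x)=(1-x)^k\), so \(f'(x)=-k(1-x)^{k-1}\) and \(f''(x)=k(k-1)(1-x)^{k-2}\). Substituting \(x=1-(1-x)\) in the drift part \([\sigma x(1-x)+\theta_0(1-x)-\theta_1 x]f'(x)\) turns it, term by term, into \(k\sigma[(1-x)^{k+1}-(1-x)^k]+k\theta_1[(1-x)^{k-1}-(1-x)^k]-k\theta_0(1-x)^k\), i.e.\ exactly the selection, pruning and killing contributions \(k\sigma[g(k+1)-g(k)]\), \(k\theta_1[g(k-1)-g(k)]\) and \(k\theta_0[g(\Delta)-g(k)]\) to \(G_R g(k)\) with \(g(j)=(1-x)^j\). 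The Kingman term \(\tfrac{\Lambda(\{0\})}{2}x(1-x)f''(x)\) becomes \(\binom{k}{2}\Lambda(\{0\})[(1-x)^{k-1}-(1-x)^k]\). For the \(\Lambda\)‑reproduction integral I would use the identities \(1-(x+z(1-x))=(1-x)(1-z)\) and \(1-(x-zx)=(1-x)+zx\), expand \(((1-x)(1-z))^k\) and \(((1-x)+zx)^k\) by the binomial theorem, note that the coefficients of \(z^0\) and \(z^1\) cancel (the former automatically, since we subtract \(f(x)\); the latter by a one‑line check), so that after division by \(z^2\) the integrand is a polynomial in \(z\), hence bounded, and the integral over \((0,1]\) finite; then integrate termwise against \(z^{-2}\Lambda({\rm d}z)\). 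A binomial resummation — rewriting the resulting expansion in powers of \(z\) as one in powers of \((1-z)\) — identifies the outcome with \(\sum_{j=2}^{k}\binom{k}{j}\lambda_{k,j}[(1-x)^{k-j+1}-(1-x)^k]\), the \(\Lambda(\{0\})\) contribution supplying the difference between \(\lambda_{k,2}\) and the \(z\in(0,1]\) part of the \(j=2\) term; the substitution \(\ell=k-j+1\) then gives \(\sum_{\ell=1}^{k-1}\binom{k}{k-\ell+1}\lambda_{k,k-\ell+1}[g(\ell)-g(k)]\), the coalescence part of \(G_R g(k)\). I would sanity‑check this bookkeeping on \(k=2,3\).

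It remains to pass from the infinitesimal identity to \eqref{eqmd}. Fix \(t>0\), \(x_0\in[0,1]\) and \(n\in\Nb\), run \(X\) and \(R\) independently, and set \(\phi(s):=E_{x_0,n}[H(X_s,R_{t-s})]\), \(s\in[0,t]\); the goal is \(\phi(0)=\phi(t)\), which is \eqref{eqmd} since \(\phi(0)=E_n[(1-x_0)^{R_t}]\) and \(\phi(t)=E_{x_0}[(1-X_t)^n]\). The Markov property gives, formally, \(\phi'(s)=E[(A_XH(\cdot,R_{t-s}))(X_s)]-E[(G_RH(X_s,\cdot))(R_{t-s})]\), which vanishes by the infinitesimal identity, whence \(\phi\) is constant. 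I expect the main obstacle to be making this last step rigorous: \(G_R\) is unbounded (from state \(j\) the selective branching alone contributes rate \(j\sigma\)), so the interchange of \({\rm d}/{\rm d}s\) with the expectations must be justified. This is done by localising at \(S_N:=\inf\{s\ge0:R_s>N\}\) and letting \(N\to\infty\), using that \(R\) is non‑explosive — indeed dominated by a Yule process of branching rate \(\sigma\), so \(S_N\to\infty\) — that \(A_XH(\cdot,k)\) is a bounded function of \(x\) for each \(k\), and that \(0\le H\le1\), in exactly the spirit of the proof of Lemma \ref{posrec}. By contrast the generator computation itself is elementary, its only delicate point being the binomial resummation that produces the \((1-z)\)‑weights of the \(\lambda_{k,j}\) and the correct bookkeeping of the atom of \(\Lambda\) at \(0\).
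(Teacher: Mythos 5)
Your proposal is correct and takes essentially the same approach as the paper: the same duality function $H(x,n)=(1-x)^n$, $H(x,\Delta)=0$, and the same generator identity $A_X H(\cdot,n)(x)=G_R H(x,\cdot)(n)$, established by the same binomial bookkeeping (the paper expands $\lambda_{n,j}$ starting from $G_R$ and matches $A_X$, you go in the opposite direction, which is equivalent). The only difference is the last step, where the paper simply invokes a standard generator-duality theorem (Liggett, Thm.~3.42, or Jansen--Kurt, Prop.~1.2) while you sketch the underlying localization argument by hand using non-explosiveness of $R$; both are valid.
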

\begin{proof}
Let $H:[0,1]\times \Nb_0\cup\{\Delta\}$ be the function defined via $H(x,n):=(1-x)^n$, $n\in\Nb_0$, $x\in[0,1]$, and $H(x,\Delta)=0$ for all $x\in[0,1]$. Applying the generator $G_R$ to the function $H(x,\cdot)$ and rearranging terms leads to
\begin{align}\label{peq1}
G_RH(x,\cdot)(n)&=\sum\limits_{j=2}^{n}\binom{n}{j}\lambda_{n,j}\left((1-x)^{n+1-j}-(1-x)^n\right)-n(1-x)^{n-1}\left(\sigma x(1-x)-\theta_1 x+\theta_0(1-x)\right).
\end{align}
Moreover, using the definition of the coefficients $\lambda_{n,j}$, $2\leq j\leq n$, applying the binomial theorem and rearranging terms, we obtain
\begin{align}\label{peq2}
& \sum\limits_{j=2}^{n}\binom{n}{j}\lambda_{n,j}\left((1-x)^{n+1-j}-(1-x)^n\right)=\frac{\Lambda(\{0\})}{2}n(n-1)(1-x)^{n-1}x\nonumber\\
 &\qquad+\int_{(0,1]}\frac{\Lambda(dz)}{z^2}\left\{x[((1-x)(1-z))^n-(1-x)^n]+(1-x)[(1-x(1-z))^n-(1-x)^n]\right\}.
\end{align}
Note that $H(\cdot,n)'(x)=-n(1-x)^{n-1}$ and $H(\cdot,n)''(x)=n(n-1)(1-x)^{n-2}$. Thus, plugging \eqref{peq2} in \eqref{peq1} yields $G_R H(x,\cdot)(n)=A_X H(\cdot,n)(x)$, $n\in\Nb_0$ and $x\in[0,1]$. The result follows from \cite[Thm.~3.42]{Li10} (or \cite[Prop.~1.2]{JK14}).
\end{proof}

Let us assume now that $\theta_0$ and $\theta_1$ are strictly positive. In this case, the process $R$ is absorbed either in $0$ or in $\Delta$. Moreover, the process $X$ is positive recurrent and the moments of its stationary distribution are characterised in the following corollary. 
\begin{corollary}[Moments of the stationary distribution]\label{morecur}
Assume that $\theta_0,\theta_1>0$ and let $X_\infty$ be a random variable distributed according to the stationary distribution of the process $X$. Then, for all $n\in\Nb_0$
$$E[(1-X_\infty)^n]=P_n(R_\infty=0)=:w_n.$$
Moreover, the sequence $(w_n)_{n\in\Nb_0}$ is characterised via
\begin{equation}\label{momrec}
 \left(\theta+\sigma+\frac1n\sum_{\ell=1}^{n-1}\binom{n}{n-\ell+1}\lambda_{n, n-\ell+1}\right)w_n=\theta_1 w_{n-1}+\sigma w_{n+1}+\frac1n\sum_{\ell=1}^{n-1}\binom{n}{n-\ell+1}\lambda_{n, n-\ell+1}w_\ell,
\end{equation}
for $n\in\Nb$ and $w_0=1$.
\end{corollary}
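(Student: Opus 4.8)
The plan is to deduce Corollary \ref{morecur} directly from the moment duality of Proposition \ref{mdlwfm} together with the ergodicity of $X$. First I would note that since $\theta_0,\theta_1>0$, the process $X$ is positive recurrent, so $X_t$ converges in distribution to $X_\infty$ as $t\to\infty$; since $x\mapsto(1-x)^n$ is bounded and continuous on $[0,1]$, the left-hand side $E_{x_0}[(1-X_t)^n]$ converges to $E[(1-X_\infty)^n]$, independently of $x_0$. On the dual side, since $\theta_0,\theta_1>0$ the process $R$ is absorbed almost surely in $\{0,\Delta\}$: beneficial mutations send a line to $\Delta$ at positive rate per line, and whenever $R$ is not at $\Delta$ it is dominated above by a process that, after the branching is controlled, gets killed or driven down, so absorption is certain. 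Hence $(1-x_0)^{R_t}\to 1_{\{R_\infty=0\}}$ almost surely (using the convention $y^\Delta=0$ and, for $x_0\in(0,1)$, that $(1-x_0)^k\to0$ as $k\to\infty$; the boundary cases $x_0\in\{0,1\}$ are handled separately or by continuity), and by bounded convergence $E_n[(1-x_0)^{R_t}]\to P_n(R_\infty=0)=w_n$. Passing to the limit $t\to\infty$ in \eqref{eqmd} gives $E[(1-X_\infty)^n]=w_n$ for all $n\in\Nb_0$, and $w_0=1$ trivially.

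Next I would derive the recursion \eqref{momrec}. The clean way is to use stationarity: writing $\nu$ for the law of $X_\infty$, stationarity of $X$ means $\int A_X f\,{\rm d}\nu=0$ for $f$ in the domain, in particular for $f(x)=(1-x)^n$. By the generator identity $A_X H(\cdot,n)(x)=G_R H(x,\cdot)(n)$ established inside the proof of Proposition \ref{mdlwfm}, and using \eqref{peq1}, integrating against $\nu$ and invoking $E[(1-X_\infty)^m]=w_m$ for each exponent $m$ appearing turns $0=\int G_R H(\cdot,\cdot)(n)\,{\rm d}\nu$ into a linear relation among $w_{n-1},w_n,w_{n+1}$ and $w_1,\dots,w_{n-1}$. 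Concretely, the selection term contributes $\sigma(w_{n+1}-w_n)$ after noting $n(1-x)^{n-1}x(1-x)=n((1-x)^n-(1-x)^{n+1})$ and taking expectations; the type-$1$ mutation term contributes $\theta_1(w_{n-1}-w_n)$; the type-$0$ killing term contributes $-\theta_0 w_n$ (the cemetery value being $0$); and the coalescent sum $\sum_{\ell=1}^{n-1}\binom{n}{n-\ell+1}\lambda_{n,n-\ell+1}$ contributes $\frac1n\sum_{\ell=1}^{n-1}\binom{n}{n-\ell+1}\lambda_{n,n-\ell+1}(w_\ell-w_n)$. Setting the sum of these to zero and rearranging gives exactly \eqref{momrec}. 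Alternatively, one can apply Dynkin's formula to $R$ started at $n$, run to absorption, and read off the same identity by a first-step decomposition; I would present whichever is shorter.

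The main obstacle I anticipate is the rigorous justification that $R$ is absorbed almost surely in $\{0,\Delta\}$ when $\theta_0,\theta_1>0$, and more importantly that the limit $t\to\infty$ may be taken inside the duality and that the limiting identity $E[(1-X_\infty)^n]=w_n$ is the one that actually pins down $(w_n)$. Absorption in $\{0,\Delta\}$ is not completely immediate because the branching part of $G_R$ pushes $R$ upward and $R$ can grow large before a beneficial mutation occurs; the point is that $R$ is non-explosive (dominated by a Yule process with parameter $\sigma$) and that on each excursion above a fixed level the probability of hitting $\Delta$ before returning is bounded below, so almost-sure absorption follows by a Borel--Cantelli argument. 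One must also make sure $w_n$ is well defined, i.e. $P_n(R_\infty\in\{0,\Delta\})=1$, which is exactly this statement. Given non-explosivity and the uniform boundedness of $H$, bounded convergence handles the interchange of limit and expectation on both sides, so once absorption is in hand the rest is routine.
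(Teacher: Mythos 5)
Your proposal is correct, and the identification $E[(1-X_\infty)^n]=w_n$ is obtained exactly as in the paper, by letting $t\to\infty$ in the duality \eqref{eqmd}; the paper simply asserts this step, whereas you justify the interchange of limits (ergodicity of $X$, a.s.\ absorption of $R$, bounded convergence), which is a welcome addition. Where you diverge is the derivation of \eqref{momrec}: the paper obtains it by a first-step decomposition of the absorption event $\{R_\infty=0\}$ according to the first jump of $R$ out of $n$, i.e.\ by the $G_R$-harmonicity of $k\mapsto w_k$ (with $w_\Delta=0$), which is a one-line argument entirely on the dual side. You instead use stationarity of $\nu=\mathcal{L}(X_\infty)$, namely $\int A_X f\,{\rm d}\nu=0$ for $f(x)=(1-x)^n$, combined with the generator identity $A_XH(\cdot,n)=G_RH(\cdot,\cdot)(n)$ from the proof of Proposition \ref{mdlwfm}; integrating \eqref{peq1} term by term against $\nu$ and dividing by $n$ does give precisely \eqref{momrec}, so this route is sound (polynomials lie in the domain of $A_X$, so the stationarity identity is legitimate), and it has the mild advantage of recycling the computation already done for the duality, at the cost of working through $X$ rather than staying with the elementary Markov-chain argument for $R$ — which you also mention as an alternative and which is the paper's choice. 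One simplification for your anticipated obstacle: absorption of $R$ in $\{0,\Delta\}$ is more immediate than your excursion/Borel--Cantelli argument suggests, since from every state $k\in\Nb$ the jump to $\Delta$ occurs at rate $k\theta_0\ge\theta_0>0$, so (given non-explosion, by the Yule domination you cite) the absorption time is stochastically dominated by an exponential time of parameter $\theta_0$.
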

\begin{proof}
 The first part follows directly by letting $t\to\infty$ in \eqref{eqmd}. Equation \eqref{morecur} is obtained by decomposing the event $\{R_\infty=0\}$ according to the first step of $R$ away from its initial state.
\end{proof}
\begin{remark}
 In the case $\Lambda\equiv 0$, it is known that $w_n=w^n$, where $w$ is the smallest root in $[0,1]$ of $\sigma w^2-(\theta+\sigma)w+\theta_1=0$ (see \cite{BCH17}). 
\end{remark}
The main focus of this work is on solving the Fearnhead-type recursions \eqref{frmm} and \eqref{frlm}. Nevertheless, we also provide an explicit expression for the generating function of the coefficients $w_n$ in Section \ref{s8} for the Bolthausen--Sznitman model.

In what follows it will be convenient to decompose $\Lambda:= m_0 \delta_{0}+\Lambda_0$, where $\Lambda_0$ has no mass at $0$.

\section{\texorpdfstring{Geometric law in the $\Lambda$-model}{Geometric law in the Lambda-model}}\label{s3}
For the Crow--Kimura model ($\Lambda\equiv 0$), it has been shown in \cite{BCH17} (see also \cite{FC17}) that the block counting process is positive recurrent if and only if $\theta_0>0$ or $\theta_0=0$ and $\theta_1>\sigma$, in which case its stationary distribution is geometric with parameter $1-p$, i.e. $p_n^\Lambda = (1-p)\,p^{n-1}$, $n\in\Nb$, where
\begin{equation}\label{pargeo}
p:=\left\{\begin{array}{ll}
                             \frac{\sigma}{\sigma+\theta_0}& \textrm{if $\theta_1=0$},\\
                      
                             \frac{\sigma+\theta-\sqrt{(\sigma-\theta)^2+4\sigma\theta_0}}{2\theta_1}& \textrm{if $\theta_1>0$}.
                           \end{array}\right.
                           \end{equation}
In this section, we aim to characterise the measures $\Lambda$ such that $L_\infty^\Lambda$ is geometrically distributed.  
\begin{proposition}\label{geomchar}
Let $\rho\in(0,1)$. The following assertions are equivalent
\begin{enumerate}
 \item The random variable $L_\infty^\Lambda$ is geometrically distributed with parameter $1-\rho$.
 \item $m_0=0$ and for all $n\in\Nb$,
  \begin{equation}\label{cg1}
\frac{1}{n}\int_{(0,1]}\left[(1-\rho)(1-x)^n+\rho-\left(\frac{1-x}{1-\rho x}\right)^n\right]\frac{\Lambda_0({\rm d}x)}{x^2} =\theta_1 \rho^2-(\sigma+\theta)\rho +\sigma.
 \end{equation}
 \item $m_0=m_1=0$ and for all $n\in\Nb_0$,
 \begin{equation}\label{cg2}
(1-\rho) \int_{(0,1)}\left[\left(\frac{1-x}{1-\rho x}\right)^n\frac{1}{1-\rho x}-(1-x)^n\right]\frac{\Lambda_0({\rm d}x)}{x}=\theta_1 \rho^2-(\sigma+\theta)\rho +\sigma.
\end{equation}
\item $m_0=m_1=0$ and for all $n\in\Nb_0$,
\begin{equation}\label{cg3a}
 \int_{(0,1)}(1-x)^n\Lambda_0({\rm d}x)=(1-\rho)\int_{(0,1)}\left(\frac{1-x}{1-\rho x}\right)^n\frac{\Lambda_0({\rm d}x)}{(1-\rho x)^2},
\end{equation}
and
\begin{equation}\label{cg3b}
\int_{(0,1)}\frac{\Lambda_0({\rm d}x)}{1-\rho x}=\frac{\theta_1 \rho^2-(\sigma+\theta)\rho +\sigma}{\rho(1-\rho)}.
\end{equation}
\end{enumerate}
If in addition $\Lambda\neq 0$, then $\int x^{-1}\Lambda_0({\rm d}x)=+\infty$, corresponding to a dust-free component.
\end{proposition}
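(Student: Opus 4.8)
The plan is to prove the four‑way equivalence by first establishing $(1)\Leftrightarrow(2)$ through a direct substitution of the geometric tail $a_n^\Lambda=\rho^n$ into the Fearnhead‑type recursion \eqref{frlm}, then deriving $(2)\Leftrightarrow(3)\Leftrightarrow(4)$ from a handful of explicit identities combined with telescoping, finite differences and a moment‑rigidity argument, and finally reading off the dust‑free statement from \eqref{cg2}. Throughout I write $\phi_n(x):=(1-\rho)(1-x)^n+\rho-\big(\tfrac{1-x}{1-\rho x}\big)^n$, $\psi_n(x):=\big(\tfrac{1-x}{1-\rho x}\big)^n\tfrac1{1-\rho x}-(1-x)^n=(1-x)^n\big((1-\rho x)^{-n-1}-1\big)$, and $C:=\theta_1\rho^2-(\sigma+\theta)\rho+\sigma$; note $\psi_n>0$ on $(0,1)$.

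For $(1)\Leftrightarrow(2)$: $L_\infty^\Lambda$ is geometric with parameter $1-\rho$ iff $a_n^\Lambda=\rho^n$ for all $n$, which, by the uniqueness of the solution of \eqref{frlm}, is equivalent to $(\rho^n)_{n\geq0}$ solving \eqref{frlm} (the boundary conditions $a_0=1$, $a_n\to0$ being automatic). I would plug $a_n^\Lambda=\rho^n$ into \eqref{frlm}, use $\lambda_{k+n,k}=\int_{[0,1]}x^{k-2}(1-x)^n\Lambda({\rm d}x)$, split off the atom at $0$ (which only meets the summand $k=2$, contributing $\frac{m_0 n(n+1)}{2}\rho^n(1-\rho)$), observe that the atom at $1$ does not enter the sum for $n\geq1$ because $(1-x)^n$ vanishes there, and evaluate the remaining series by Tonelli together with $\sum_{k\geq0}\binom{k+n-1}{k}t^k=(1-t)^{-n}$. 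After the terms $\pm\,\rho^n n x(1-x)^n$ cancel, this collapses to
\[ \int_{(0,1)}\phi_n(x)\,x^{-2}\,\Lambda_0({\rm d}x)=nC-\frac{m_0 n(n+1)}{2}\,\rho(1-\rho)-m_1\rho,\qquad n\in\Nb. \]
Since $\phi_0\equiv0$ and $\phi_{n+1}-\phi_n=(1-\rho)x\psi_n$ (so $\phi_n=(1-\rho)x\sum_{k=0}^{n-1}\psi_k\geq0$), the left side is nonnegative, so the right side cannot tend to $-\infty$; this forces $m_0=0$. With $m_0=0$ the identity becomes $\int_{(0,1)}\phi_n x^{-2}\Lambda_0=nC-m_1\rho$, and adding $m_1\phi_n(1)=m_1\rho$ gives exactly \eqref{cg1}. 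Every step is reversible given \eqref{cg1} (the integrals being finite since $\phi_n(x)x^{-2}$ is bounded on $(0,1)$), which yields $(2)\Rightarrow(1)$.

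For $(2)\Leftrightarrow(3)\Leftrightarrow(4)$ I would use the elementary identities $\phi_1(x)x^{-2}=\rho(1-\rho)/(1-\rho x)$ and $\psi_n-\psi_{n+1}=x\big[(1-\rho)(1-\rho x)^{-2}\big(\tfrac{1-x}{1-\rho x}\big)^n-(1-x)^n\big]$, all checked by direct computation. Telescoping $\phi_N=(1-\rho)x\sum_{k=0}^{N-1}\psi_k$ turns a sum of \eqref{cg2} over $n=0,\dots,N-1$ into \eqref{cg1}, giving $(3)\Rightarrow(2)$ immediately (here $m_1=0$ makes the two integration domains coincide). Conversely, from \eqref{cg1} one takes first differences in $n$ to get \eqref{cg2} for $n\geq1$ (automatically restricted to $(0,1)$, since $\psi_n(1)=0$), and second differences then yield \eqref{cg3a} for $n\geq1$. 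The crucial point is a rigidity lemma: after the substitution $y=1-x$, \eqref{cg3a} says two finite measures on $(0,1)$ have equal moments of every order $\geq1$; their signed difference $\mu$ therefore makes $y\,{\rm d}\mu$ a measure with all moments zero, so $y\,{\rm d}\mu=0$ and hence $\mu=0$, upgrading \eqref{cg3a} to all $n\geq0$. Running the identities backwards gives \eqref{cg2} for all $n\geq0$, in particular its $n=0$ instance $\rho(1-\rho)\int_{(0,1)}(1-\rho x)^{-1}\Lambda_0=C$; comparing with the $n=1$ case of \eqref{cg1}, which reads $\rho(1-\rho)\int_{(0,1]}(1-\rho x)^{-1}\Lambda_0=C$, forces $m_1=0$, completing $(2)\Rightarrow(3)$. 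The equivalence $(3)\Leftrightarrow(4)$ is the same differencing read as a chain of equivalences, the $n=0$ case of \eqref{cg2} being precisely \eqref{cg3b}. I expect this rigidity/$m_1=0$ step to be the main obstacle, being the only genuinely non‑mechanical ingredient.

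Finally, for the dust‑free statement, assume $\Lambda\neq0$ and, for contradiction, that $\int_{(0,1)}x^{-1}\Lambda_0({\rm d}x)<\infty$, so that $\nu({\rm d}x):=x^{-1}\Lambda_0({\rm d}x)$ is a finite measure on $(0,1)$. In \eqref{cg2} the integrand $\big(\tfrac{1-x}{1-\rho x}\big)^n(1-\rho x)^{-1}-(1-x)^n$ is bounded by $\tfrac1{1-\rho}+1$ uniformly in $n$ and tends to $0$ pointwise on $(0,1)$, so dominated convergence against $\nu$ and $n\to\infty$ give $C=0$. Then \eqref{cg3b} (the $n=0$ case of \eqref{cg2}) reads $\rho(1-\rho)\int_{(0,1)}(1-\rho x)^{-1}\Lambda_0({\rm d}x)=0$, and since $(1-\rho x)^{-1}>0$ on $(0,1)$ this forces $\Lambda_0=0$, whence $\Lambda=m_0\delta_0+\Lambda_0=0$, contradicting $\Lambda\neq0$. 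Hence $\int x^{-1}\Lambda_0({\rm d}x)=+\infty$; as $m_0=m_1=0$, this is the same as $\int_{[0,1]}x^{-1}\Lambda({\rm d}x)=+\infty$, i.e.\ the absence of a dust component.
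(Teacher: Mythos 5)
Your proposal is correct, and for most of its length it runs along the same lines as the paper: the equivalence of (1) and (2) by substituting $a_n^\Lambda=\rho^n$ into \eqref{frlm}, using uniqueness of the solution, Tonelli and the negative-binomial series, with nonnegativity of the integrand forcing $m_0=0$; then first and second differences in $n$, telescoping back, and dominated convergence for the dust-free statement. The genuine difference is how you dispose of $m_1$ and extend \eqref{cg2}/\eqref{cg3a} from $n\ge 1$ to $n=0$. The paper, after obtaining \eqref{cg2} for $n\ge1$ by differencing \eqref{aux2}, invokes the telescoping identity \eqref{aux3} to assert $nI_n=nC$ and compares with \eqref{aux2} to conclude $m_1=0$; as printed, that step only returns $I_1=C-m_1\rho$, i.e.\ \eqref{cg1} at $n=1$ again, so it presupposes what it is meant to prove. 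You instead insert a moment-determinacy (``rigidity'') lemma: \eqref{cg3a} for all $n\ge1$ says that the pushforward of $\Lambda_0$ under $x\mapsto 1-x$ and the pushforward of $(1-\rho)(1-\rho x)^{-2}\Lambda_0({\rm d}x)$ under $x\mapsto (1-x)/(1-\rho x)$ have equal moments of all orders $n\ge1$; multiplying their signed difference by $y$ gives a finite signed measure on $[0,1]$ with all moments zero, hence zero, so \eqref{cg3a} also holds at $n=0$, whence $J_0=J_1=C$, and comparing the $n=0$ case of \eqref{cg2} with the $n=1$ case of \eqref{cg1} (which differ exactly by the atom at $1$, contributing $\rho m_1$) forces $m_1=0$. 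This Hausdorff/Weierstrass argument is the one genuinely non-mechanical ingredient, and it supplies a complete justification at precisely the point where the paper's own telescoping argument is circular; the price is an appeal to moment determinacy on $[0,1]$, which is standard and in any case close in spirit to the Hausdorff moment machinery the paper uses later in Section \ref{s3}. Your remaining steps ($\phi_{n+1}-\phi_n=(1-\rho)x\psi_n$, $\phi_1(x)x^{-2}=\rho(1-\rho)/(1-\rho x)$, $\psi_n(1)=0$ for $n\ge1$, the $(3)\Leftrightarrow(4)$ differencing with \eqref{cg3b} as the $n=0$ case of \eqref{cg2}, and the dominated-convergence contradiction giving the dust-free claim) all check out.
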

\begin{proof}
From Eq. \eqref{frlm}, we see that $L_\infty^\Lambda\sim\textrm{Geom}(1-\rho)$ if and only if
  \begin{equation}\label{geoc1}
 \frac{1}{n}\sum\limits_{k=2}^\infty\frac{(n)_k^\uparrow}{k!}\lambda_{k+n,k}(\rho-\rho^k)+ \frac{m_1}{n}\rho 
 =\theta_1 \rho^2-(\sigma+\theta)\rho +\sigma,\quad n\in\Nb,
\end{equation}
 where $()^{\uparrow}$ is the rising factorial (see Appendix \ref{Ab}). Now, let us define
 \begin{equation*}
 \begin{split}
 I_n&:=\frac{1}{n}\int_{(0,1)}\left[(1-\rho)(1-x)^n+\rho-\left(\frac{1-x}{1-\rho x}\right)^n\right]\frac{\Lambda_0({\rm d}x)}{x^2},\quad n\in\Nb,\\
 J_n&:=(1-\rho) \int_{(0,1)}\left[\left(\frac{1-x}{1-\rho x}\right)^n\frac{1}{1-\rho x}-(1-x)^n\right]\frac{\Lambda_0({\rm d}x)}{x},\quad n\in\Nb_0,\\
 K_n&:= \int_{(0,1)}\left[(1-x)^n-\left(\frac{1-x}{1-\rho x}\right)^n\frac{1-\rho}{(1-\rho x)^2}\right]\Lambda_0({\rm d}x),\quad n\in\Nb_0.\\
 \end{split}
\end{equation*}
Using Fubini's theorem, we see that
\begin{equation}\label{aux0}
 \frac{1}{n}\sum\limits_{k=2}^\infty\frac{(n)_k^\uparrow}{k!}\lambda_{k+n,k}(\rho-\rho^k)=\frac{m_0(n+1)}{2}(\rho-\rho^2)+I_n.
\end{equation}
Therefore, $L_\infty^\Lambda\sim\textrm{Geom}(1-\rho)$ if and only if
\begin{equation}\label{aux1}
 \frac{m_0(n+1)}{2}(\rho-\rho^2)+I_n+\frac{m_1}{n}\rho=\theta_1 \rho^2-(\sigma+\theta)\rho +\sigma,\quad n\in\Nb.
\end{equation}
Since the left-hand side of \eqref{cg1} equals $I_n+m_1\rho/n$, clearly (2) implies (1). Now assume that $L_\infty^\Lambda\sim\textrm{Geom}(1-\rho)$. A straightforward application of Fubini's theorem shows that
$$I_n= \frac{1}{n}\sum\limits_{k=2}^\infty\frac{(n)_k^\uparrow}{k!}(\rho-\rho^k)\int_{(0,1)}x^k(1-x)^n\frac{\Lambda_0({\rm d}x)}{x^2}\geq 0.$$
Therefore, if $m_0>0$, the left-hand side of \eqref{aux1} tends to infinity as $n\to\infty$, in contrast to the right-hand side which is constant. We conclude that $m_0=0$. Hence Eq. \eqref{aux1} yields \eqref{cg1}. Thus (1) implies (2).

Now we prove that (2) implies (3). Indeed, if (2) holds true, then
\begin{equation}\label{aux2}
 kI_k+m_1\rho=k(\theta_1 \rho^2-(\sigma+\theta)\rho +\sigma),\quad k\in\Nb.
\end{equation}
Note that $J_n=(n+1)I_{n+1}-n I_n$. Therefore, \eqref{cg2}, for $n\geq 1$, is obtained by writing down Eq. \eqref{aux2} for $k=n$ and $k=n+1$ and taking the difference of these two equations. In addition, since 
\begin{equation}\label{aux3}
 nI_n=I_1+\sum_{k=1}^{n-1}J_n, \quad n\in\Nb,
\end{equation}
we deduce that
$n I_n=n(\theta_1 \rho^2-(\sigma+\theta)\rho +\sigma).$
Comparing this equation with \eqref{aux2}, we get $m_1=0$. Since for $m_1=0$ we have $I_1=J_0$, we conclude that \eqref{cg1} holds true also for $n=0$, which ends the proof that (2) implies (3). Moreover, since $I_1=J_0$ for $m_1=0$, (2) follows directly from (3) using \eqref{aux3}. 

It remains to prove the equivalence of (3) and (4), but this follows using that
$$J_n-J_{n+1}=(1-\rho)K_n,\quad J_n=J_1-(1-\rho)\sum_{k=0}^{n-1}K_k,\quad\textrm{and}\quad J_0=(1-\rho)\rho\int_{(0,1)}\frac{\Lambda_0({\rm d}x)}{1-\rho x}.$$
Finally, let us assume that $L_\infty^\Lambda\sim\textrm{Geom}(1-\rho)$ and that  $\int x^{-1}\Lambda_0({\rm d}x)<+\infty$. Applying the dominated convergence theorem, we get that
the left-hand side of \eqref{cg2} converges to zero as $n\to\infty$. Hence the right-hand side of \eqref{cg2} has to be zero. Since the function integrated in \eqref{cg2} is non-negative, we conclude that it has to be zero, which is impossible. This finishes the proof.
\end{proof}
A first consequence of the previous result is that for the Wright--Fisher diffusion model and for the star-shaped model the distribution of $L_\infty^\Lambda$ is not geometric. 
Next, we show the existence of a non-trivial $\Lambda$ measure such that $L_\infty^\Lambda$ has the geometric distribution. More precisely, we show that the uniform measure on $[0,1]$ provides such an example.
\begin{lemma}\label{uniquerho}
Let $\Lambda$ be the uniform measure on $[0,1]$. Then, there is a unique $\rho:=\rho(\sigma, \theta_0,\theta_1)\in(0,1)$ satisfying Eq. \eqref{cg3b}. Moreover, 
\begin{enumerate}
 \item if $\theta_0=\theta_1=0$, then $\rho=1-e^{-\sigma}$.
 \item if $\theta_0=0$ and $\theta_1>0$, then $\rho=1-\theta_1^{-1}\,W\left(\theta_1 e^{\theta_1-\sigma}\right)$.
 \item if $\theta_0>0$ and $\theta_1=0$, then $\rho=1-\theta_0\, \left[W\left(\theta_0 e^{\theta_0+\sigma}\right)\right]^{-1}$,
\end{enumerate}
where $W$ denotes the (single-valued) restriction to $\Rb_+$ of the (multi-valued) Lambert-W function (see, e.g., \cite{CGHJK96}). 
\end{lemma}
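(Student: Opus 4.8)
The plan is to evaluate the integral in \eqref{cg3b} in closed form when $\Lambda_0$ is Lebesgue measure on $(0,1)$ (the uniform measure has no atoms, so in the notation of Proposition \ref{geomchar} we have $m_0=m_1=0$ and $\Lambda_0={\rm d}x$), reduce \eqref{cg3b} to a one-dimensional equation governed by a strictly increasing function, and then read off the three explicit roots via the Lambert-$W$ function. For $\rho\in(0,1)$ one has $\int_{(0,1)}(1-\rho x)^{-1}\,{\rm d}x=-\rho^{-1}\log(1-\rho)$, so after multiplying through by $\rho$, \eqref{cg3b} becomes
\[
-\log(1-\rho)=\frac{\theta_1\rho^2-(\sigma+\theta)\rho+\sigma}{1-\rho}.
\]
Writing $N(\rho):=\theta_1\rho^2-(\sigma+\theta)\rho+\sigma$ and using $\theta=\theta_0+\theta_1$, a short computation gives $N(\rho)=-\theta_0-(1-\rho)(\theta_1\rho-\sigma-\theta_0)$, hence $N(\rho)/(1-\rho)=-\theta_0/(1-\rho)-\theta_1\rho+\sigma+\theta_0$. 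Thus \eqref{cg3b} is equivalent to $g(\rho)=\sigma+\theta_0$, where $g(\rho):=-\log(1-\rho)+\theta_0/(1-\rho)+\theta_1\rho$ for $\rho\in[0,1)$.

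For existence and uniqueness I would observe that $g(0)=\theta_0<\sigma+\theta_0$ (here we use $\sigma>0$), that $g(\rho)\to+\infty$ as $\rho\uparrow 1$, and that $g'(\rho)=(1-\rho)^{-1}+\theta_0(1-\rho)^{-2}+\theta_1>0$; hence $g$ is a strictly increasing bijection of $[0,1)$ onto $[\theta_0,\infty)$, and there is exactly one $\rho\in(0,1)$ solving \eqref{cg3b}.

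To identify this $\rho$ I would solve $g(\rho)=\sigma+\theta_0$ in each regime with the substitution $y:=1-\rho\in(0,1)$. When $\theta_0=\theta_1=0$ the equation reads $-\log y=\sigma$, i.e. $y=e^{-\sigma}$. When $\theta_0=0$, $\theta_1>0$ it reads $-\log y+\theta_1(1-y)=\sigma$, which rearranges to $\theta_1 y\,e^{\theta_1 y}=\theta_1 e^{\theta_1-\sigma}$; since $\theta_1 y>0$ and $w\mapsto we^{w}$ is an increasing bijection of $[0,\infty)$, this forces $\theta_1 y=W(\theta_1 e^{\theta_1-\sigma})$. When $\theta_0>0$, $\theta_1=0$, putting $z:=\theta_0/y>0$ turns $-\log y+\theta_0/y=\sigma+\theta_0$ into $z+\log z=\sigma+\theta_0+\log\theta_0$, i.e. $z\,e^{z}=\theta_0 e^{\theta_0+\sigma}$, so $z=W(\theta_0 e^{\theta_0+\sigma})$ and $y=\theta_0/z$. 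Reverting to $\rho=1-y$ yields the three stated formulas; since the argument of $W$ is positive in each case the single-valued restriction of $W$ to $\Rb_+$ applies, and one checks $\rho\in(0,1)$ directly (e.g. in the second case $W(\theta_1 e^{\theta_1-\sigma})<W(\theta_1 e^{\theta_1})=\theta_1$ because $\sigma>0$ and $W$ is increasing).

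I expect no genuinely hard step here: the work is the algebraic manipulation that rewrites \eqref{cg3b} as the monotone equation $g(\rho)=\sigma+\theta_0$, together with choosing, in cases (2) and (3), the substitution that exposes the pattern $we^{w}$ and verifying that the relevant arguments of $W$ are nonnegative so that the $\Rb_+$-branch is the correct one.
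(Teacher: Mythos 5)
Your proposal is correct and follows essentially the same route as the paper: both reduce \eqref{cg3b} for the uniform measure to a one-variable transcendental equation, establish uniqueness of the root in $(0,1)$ by elementary calculus, and obtain the explicit cases by inverting $w\mapsto we^{w}$ via the Lambert-$W$ function. The only (cosmetic) difference is that the paper keeps the equation in the form $r(\rho):=(\sigma+\log(1-\rho)-\theta_1\rho)(\rho-1)+\theta_0\rho=0$ and uses concavity of $r$, whereas you divide by $1-\rho$ to get the strictly increasing function $g(\rho)=-\log(1-\rho)+\theta_0/(1-\rho)+\theta_1\rho$, which makes uniqueness immediate.
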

\begin{proof}
 A straightforward calculation shows that \eqref{cg3b} is equivalent to $(\sigma+\log(1-\rho)-\theta_1 \rho)(\rho-1)+\theta_0 \rho=0$. Therefore, we only need to show that the function $r:(0,1)\to\Rb$ defined via $$r(x):=(\sigma+\log(1-x)-\theta_1 x)(x-1)+\theta_0 x,\quad x\in (0,1),$$ 
has a unique root. For this note that for all $x\in(0,1)$, we have $r''(x)=-2\theta_1-\frac1{1-x}<0$, and hence $r'$ is strictly decreasing in $(0,1)$. Moreover, $r'(0+)=1+\sigma+\theta>0$ and $r'(1-)=-\infty$. We infer that $r'$ has a unique root $x_0\in(0,1)$ and that $r$ is strictly decreasing in $(x_0,1)$. In addition, $r(1-)=\theta_0\geq 0$, and thus, $r(x)>\theta_0$ for $x\in(x_0,1)$. Since, $r(0+)=-\sigma<0$, this implies that $r$ has a root in $(0,1)$. The uniqueness of this root is a consequence of the strict monotonicity of $r'$. 
 
It remains to show the explicit formulas for $\rho$ in the cases (1), (2) and (3). Case (1) is trivial. Cases (2) and (3) follow using that the function $W$ is the inverse of the function $x\mapsto xe^x$.
\end{proof}
\begin{corollary}\label{geomlawBS}
For the Bolthausen--Sznitman model with selection parameter $\sigma>0$ and mutation parameters $\theta_0,\theta_1\geq 0$, the stationary distribution of the block counting process is geometric with parameter $1-\rho$, where $\rho$ is the unique
solution of \eqref{cg3b}. 
\end{corollary}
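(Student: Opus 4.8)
The plan is to invoke the equivalence of (1) and (4) in Proposition~\ref{geomchar}. For the Bolthausen--Sznitman model the measure $\Lambda$ is uniform on $[0,1]$, so in the decomposition $\Lambda=m_0\delta_0+\Lambda_0$ one has $m_0=m_1=0$ and $\Lambda_0({\rm d}x)={\rm d}x$ on $(0,1)$. First I would observe that $\sigma_\Lambda=-\int_{(0,1]}\log(1-x)\,x^{-2}\,{\rm d}x=+\infty$, since the integrand is asymptotic to $1/x$ as $x\to 0$; consequently Lemma~\ref{posrec} applies for every $\sigma>0$ and $\theta_0,\theta_1\ge 0$, so $L^\Lambda$ is positive recurrent, $L_\infty^\Lambda$ is well defined, and both the recursion~\eqref{frlm} and Proposition~\ref{geomchar} are at our disposal.

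By Lemma~\ref{uniquerho} there is a unique $\rho=\rho(\sigma,\theta_0,\theta_1)\in(0,1)$ satisfying~\eqref{cg3b}. It therefore remains only to check~\eqref{cg3a} for $\Lambda_0({\rm d}x)={\rm d}x$, and in fact this identity holds for every $\rho\in(0,1)$. Its left-hand side equals $\int_0^1(1-x)^n\,{\rm d}x=1/(n+1)$, while I would evaluate the right-hand side $(1-\rho)\int_0^1(1-x)^n(1-\rho x)^{-(n+2)}\,{\rm d}x$ through the change of variables $y=(1-x)/(1-\rho x)$, which is a smooth decreasing bijection of $(0,1)$ onto itself. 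One computes $x=(1-y)/(1-\rho y)$, $1-\rho x=(1-\rho)/(1-\rho y)$, $1-x=y(1-\rho)/(1-\rho y)$ and ${\rm d}x=-(1-\rho)(1-\rho y)^{-2}\,{\rm d}y$; after substitution the sign from reversing the limits of integration absorbs the minus sign in ${\rm d}x$, every power of $1-\rho$ and every power of $1-\rho y$ cancels, and the integral collapses to $\int_0^1 y^n\,{\rm d}y=1/(n+1)$. Hence~\eqref{cg3a} holds, condition (4) of Proposition~\ref{geomchar} is verified with this $\rho$, and the implication (4)$\Rightarrow$(1) yields $L_\infty^\Lambda\sim\mathrm{Geom}(1-\rho)$, i.e. $p_n^\Lambda=(1-\rho)\rho^{n-1}$ for $n\in\Nb$.

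There is no real obstacle here: the computation is essentially a one-line substitution, and Lemma~\ref{uniquerho} already does the work of locating $\rho$. The only points that deserve attention are the preliminary verification that $\sigma_\Lambda=+\infty$, needed to legitimately speak of $L_\infty^\Lambda$ through Lemma~\ref{posrec} and to apply Proposition~\ref{geomchar}, and the observation that~\eqref{cg3a} is an identity valid for all $\rho\in(0,1)$, so that it is only~\eqref{cg3b} that constrains the parameter. As a consistency check one may note that $\Lambda\ne 0$ with $\int_{(0,1)}x^{-1}\Lambda_0({\rm d}x)=\int_0^1 x^{-1}\,{\rm d}x=+\infty$, in agreement with the dust-free conclusion of Proposition~\ref{geomchar}. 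An alternative to the substitution would be to expand $(1-\rho x)^{-(n+2)}$ as a binomial series and match coefficients in powers of $\rho$, but the change of variables is the cleaner route.
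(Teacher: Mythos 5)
Your proposal is correct and follows essentially the same route as the paper: verify \eqref{cg3a} for the uniform measure by a M\"obius change of variables (yours, $y=(1-x)/(1-\rho x)$, differs from the paper's $y=(1-\rho)x/(1-\rho x)$ only by the reflection $y\mapsto 1-y$), take $\rho$ from Lemma \ref{uniquerho} for \eqref{cg3b}, and conclude via Proposition \ref{geomchar}. Your preliminary check that $\sigma_\Lambda=\infty$ (so Lemma \ref{posrec} gives positive recurrence) is a sensible addition that the paper leaves implicit here.
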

\begin{proof}
 Using the change of variables $y=(1-\rho)x/(1-\rho x)$, we get
 $$(1-\rho)\int_0^1\left(\frac{1-x}{1-\rho x}\right)^n\frac{{\rm d}x}{(1-\rho x)^2}=\int_0^1 (1-y)^n {\rm d}y,$$
 and therefore the uniform measure on $[0,1]$ satisfies \eqref{cg3a}. Since \eqref{cg3b} is satisfied for $\rho$, the result follows using Proposition \ref{geomchar}.
\end{proof}

\begin{corollary}\label{absBS}
 For the Bolthausen--Sznitman model with selection parameter $\sigma>0$ and no mutation, i.e. $\theta=0$, we have
 $$P_x(X_\infty=0)=\frac{(1-x)e^{-\sigma}}{x+(1-x)e^{-\sigma}}, \qquad x\in[0,1].$$
\end{corollary}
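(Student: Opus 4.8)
The plan is to combine the moment duality of Proposition~\ref{mdlwfm} with the geometric stationary law established in Corollary~\ref{geomlawBS}. Since here $\theta_0=\theta_1=0$, the killed ASG involves neither pruning of lines at rate $\theta_1$ nor killing at rate $\theta_0$, so its block counting process $R$ never reaches the cemetery $\Delta$ and its generator $G_R$ reduces to the generator of $L^\Lambda$; in particular $R$ and $L^\Lambda$ have the same law. For the Bolthausen--Sznitman measure, $\sigma_\Lambda=-\int_0^1\log(1-x)\,x^{-2}\,{\rm d}x=+\infty$ (the integrand behaves like $1/x$ near $0$), so by Lemma~\ref{posrec} the process $L^\Lambda$ is positive recurrent for every $\sigma>0$, and by Corollary~\ref{geomlawBS} together with Lemma~\ref{uniquerho}(1) its stationary distribution is geometric with parameter $1-\rho$, where $\rho=1-e^{-\sigma}$. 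Hence, writing $R_\infty$ for a random variable with this stationary law, $P(R_\infty=k)=e^{-\sigma}(1-e^{-\sigma})^{k-1}$ for $k\in\Nb$.

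Next I would invoke the moment duality \eqref{eqmd} in the present setting, $E_x[(1-X_t)^n]=E_n[(1-x)^{R_t}]$ for $t\ge0$, $n\in\Nb$, $x\in[0,1]$ (the convention $y^\Delta=0$ being vacuous since $R$ stays in $\Nb$), and let $t\to\infty$. Positive recurrence gives $R_t\Rightarrow R_\infty$ independently of the starting state, so the right-hand side converges to
\[
E[(1-x)^{R_\infty}]=\sum_{k\ge1}(1-x)^k e^{-\sigma}(1-e^{-\sigma})^{k-1}=\frac{(1-x)e^{-\sigma}}{x+(1-x)e^{-\sigma}},
\]
a one-line geometric-series evaluation (the denominator is $1-(1-x)(1-e^{-\sigma})$); denote this quantity by $c(x)$. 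The left-hand side therefore also converges, to the same value $c(x)$ for every $n\ge1$, and since $[0,1]$ is compact the method of moments forces $1-X_t$ to converge in distribution to the Bernoulli law of mean $c(x)$. As $\theta=0$, the endpoints $0$ and $1$ are absorbing for $X$ and fixation occurs almost surely, so $X_t\to X_\infty\in\{0,1\}$ a.s.\ with $\mathrm{law}(1-X_\infty)=\mathrm{Bernoulli}(c(x))$; hence $P_x(X_\infty=0)=c(x)$, which is the asserted identity.

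The genuinely delicate point --- the only ingredient not already available from the excerpt --- is the passage $t\to\infty$ on the type-frequency side, i.e.\ the classical fixation statement that $X_t$ converges almost surely to a $\{0,1\}$-valued limit in the absence of mutation; this is standard for $\Lambda$-Wright--Fisher models with selection (cf.\ \cite{Fo13,G14}), and if one wishes to avoid it the weak-limit version obtained above already suffices to interpret $P_x(X_\infty=0)$ as $\lim_{t\to\infty}P_x(X_t<\varepsilon)$ for any $\varepsilon\in(0,1)$. A purely equivalent route, avoiding Proposition~\ref{mdlwfm}, would instead feed $a_n^\Lambda=\rho^n=(1-e^{-\sigma})^n$ into the formula of Remark~\ref{catd} to get $h_\Lambda(x)=x/\bigl(x+(1-x)e^{-\sigma}\bigr)$, and then use that, without mutation, the type $J_t$ is constant along the sampled ancestral lineage, whence $h_\Lambda(x)=\lim_{t\to\infty}E_x[X_t]=P_x(X_\infty=1)$ and $P_x(X_\infty=0)=1-h_\Lambda(x)$. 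Everything else is routine.
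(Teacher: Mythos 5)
Your argument is correct and is essentially the paper's proof: the paper also establishes the identity by noting that $R$ and $L^\Lambda$ coincide in law when $\theta=0$ and then combining the moment duality of Proposition~\ref{mdlwfm} with Corollary~\ref{geomlawBS} and Lemma~\ref{uniquerho}. You merely spell out the $t\to\infty$ passage (geometric-series evaluation, method of moments on $[0,1]$, fixation) that the paper leaves implicit, so no further comment is needed.
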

\begin{proof}
Since $R$ and $L^\Lambda$ have the same distribution when $\theta=0$, the result follows from Proposition \ref{mdlwfm}, Lemma \ref{uniquerho} and Corollary \ref{geomlawBS}.  
\end{proof}

It seems natural to ask if the uniform measure on $[0,1]$ is the unique (up to a multiplicative constant) measure $\Lambda$ leading to the geometric distribution. This question will be the matter in the rest of this section. 
\begin{lemma}\label{transf}
Let $\varphi:[0,1]\to [0,1]$ be defined via $\varphi(x):=(1-x)/(1-\rho x)$, $x\in[0,1]$. If $L_\infty^\Lambda\sim\textrm{Geom}(1-\rho)$, then the moments $y_k:=\int y^{k}\mu({\rm d}y)$, $k\in\Nb_0$, where $\mu:=\Lambda\circ\varphi^{-1}$ denotes the pushforward of the measure $\Lambda$ by $\varphi$, satisfy the linear system of equations 
 $$y_n-2\rho y_{n+1}+\rho^2 y_{n+2}-(1-\rho)^{n+1}\sum_{k\geq 0}\binom{n+k-1}{k}\rho^k y_{n+k}=0,\quad n\in\Nb,$$
 and $\rho y_0-2\rho y_1+\rho^2 y_2=0.$
 \end{lemma}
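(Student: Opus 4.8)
The plan is to extract the claimed relations directly from part~(4) of Proposition~\ref{geomchar}, specifically from identity~\eqref{cg3a}, by pushing it forward through the fractional linear map $\varphi$ and reading off the resulting relation between the moments of $\mu$.

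First I would record the algebraic properties of $\varphi$. A direct computation gives that $\varphi$ is an involution, $\varphi\circ\varphi=\mathrm{id}$ on $[0,1]$, together with the identities
\[
1-\varphi(x)=\frac{(1-\rho)\,x}{1-\rho x},\qquad 1-\rho\,\varphi(x)=\frac{1-\rho}{1-\rho x},\qquad x\in[0,1].
\]
Since $L_\infty^\Lambda\sim\mathrm{Geom}(1-\rho)$, Proposition~\ref{geomchar} yields $m_0=m_1=0$, so that $\Lambda=\Lambda_0$ is supported on $(0,1)$; consequently $\mu=\Lambda\circ\varphi^{-1}$ is a finite measure on $(0,1)$ with $y_0=\mu((0,1))=\|\Lambda\|<\infty$, and \eqref{cg3a} holds for every $n\in\Nb_0$. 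Because $\varphi$ is an involution we also have $\Lambda=\mu\circ\varphi^{-1}$, hence $\int h\,{\rm d}\Lambda=\int (h\circ\varphi)\,{\rm d}\mu$ for every bounded measurable $h$.

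Next I would change variables $y=\varphi(x)$ in both sides of \eqref{cg3a}. Using the two displayed identities, the left-hand side becomes
\[
\int_{(0,1)}(1-x)^n\,\Lambda({\rm d}x)=\int_{(0,1)}\bigl(1-\varphi(y)\bigr)^n\,\mu({\rm d}y)=(1-\rho)^n\int_{(0,1)}\frac{y^n}{(1-\rho y)^n}\,\mu({\rm d}y),
\]
while the right-hand side, using $\varphi(\varphi(y))=y$ and $1-\rho\varphi(y)=(1-\rho)/(1-\rho y)$, becomes
\[
(1-\rho)\int_{(0,1)}\varphi(x)^n\,\frac{\Lambda({\rm d}x)}{(1-\rho x)^2}=\frac{1}{1-\rho}\int_{(0,1)}y^n(1-\rho y)^2\,\mu({\rm d}y)=\frac{1}{1-\rho}\bigl(y_n-2\rho\,y_{n+1}+\rho^2 y_{n+2}\bigr).
\]
In the left-hand side I would then expand $(1-\rho y)^{-n}=\sum_{k\ge0}\binom{n+k-1}{k}(\rho y)^k$ and interchange sum and integral; this is legitimate since $0\le y\le 1$, $\rho\in(0,1)$, $y_{n+k}\le\mu((0,1))<\infty$ and $\sum_{k\ge0}\binom{n+k-1}{k}\rho^k=(1-\rho)^{-n}<\infty$, so the double series converges absolutely. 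This turns the left-hand side into $(1-\rho)^n\sum_{k\ge0}\binom{n+k-1}{k}\rho^k y_{n+k}$.

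Finally I would equate the two expressions, multiply through by $1-\rho$, and rearrange to obtain
\[
y_n-2\rho\,y_{n+1}+\rho^2 y_{n+2}-(1-\rho)^{n+1}\sum_{k\ge0}\binom{n+k-1}{k}\rho^k y_{n+k}=0,\qquad n\in\Nb_0.
\]
For $n\ge1$ this is the first asserted identity. For $n=0$ all terms with $k\ge1$ vanish, since $\binom{k-1}{k}=0$, and the relation collapses to $\rho y_0-2\rho y_1+\rho^2 y_2=0$, the second asserted identity. I do not expect a genuine obstacle here: the argument is a change of variables plus the negative binomial expansion, and the only point that warrants care is the termwise integration, which is why I isolate above the uniform bound $y_{n+k}\le\|\Lambda\|$ and the convergence of $\sum_k\binom{n+k-1}{k}\rho^k$.
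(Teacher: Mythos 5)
Your proposal is correct and follows essentially the same route as the paper: it uses that $\varphi$ is a M\"obius involution to transfer \eqref{cg3a} into the identity $(1-\rho)^n\int y^n(1-\rho y)^{-n}\mu({\rm d}y)=\frac{1}{1-\rho}\int y^n(1-\rho y)^2\mu({\rm d}y)$ and then expands $(1-\rho y)^{-n}$ by the negative binomial series to read off the moment relations. The only difference is that you spell out the justification of the sum--integral interchange and the $n=0$ case, which the paper leaves implicit.
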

 \begin{proof}
  Noting  that $\varphi$ is a M\"obius transformation satisfying $\varphi^{-1}=\varphi$, a straightforward calculation shows that \eqref{cg3a} translates into
  $$(1-\rho)^n \int_{[0,1]}\frac{y^n}{(1-\rho y)^n}\mu({\rm d}y)=\frac{1}{1-\rho}\int_{[0,1]}y^n(1-\rho y)^2\mu({\rm d}y), \quad n\in\Nb_0.$$
  Moreover,
  $$\int_{[0,1]}y^n(1-\rho y)^2\mu({\rm d}y)=y_n-2\rho y_{n+1}+\rho^2 y_{n+2},$$
  and
  $$\int_{[0,1]}\frac{y^n}{(1-\rho y)^n}\mu({\rm d}y)=\int_{[0,1]}y^n\sum_{k\geq 0}\binom{n+k-1}{k}(\rho y)^k\mu({\rm d}y)=\sum_{k\geq 0}\binom{n+k-1}{k}\rho^k y_{n+k}.$$
  The result follows.
 \end{proof}

Let us consider the linear operator $S:\ell^\infty\to\ell^\infty$ on the Banach space $\ell^\infty:=\{x=(x_i)_{i\in\Nb_0}\in\Rb^\infty :  \rVert x\lVert:=\sup_{i\in\Nb_0}|x_i|<\infty\}$ defined via
$$(Sx)_n:=2 \rho x_{n+1}-\rho^2x_{n+2}+(1-\rho)^{n+1}\sum_{k\geq 0}\binom{n+k-1}{k}\rho^k x_{n+k}\quad\textrm{and}\quad (Sx)_0:=2\rho x_1-\rho^2 x_2+(1-\rho)x_0,$$
for all $n\in\Nb$ and $x\in\ell^\infty$. Note that from Lemma \ref{transf}, if $L_\infty^\Lambda\sim\textrm{Geom}(1-\rho)$, then the vector $y:=(y_k)_{k\in\Nb_0}$ of the moments of $\mu=\Lambda\circ \varphi^{-1}$ is a fixed point of $S$.
We are then interested on the fixed points of $S$ arising as the moments of a finite measure. It is a well known result of Hausdorff \cite{Ha21} that a non-negative sequence $x:=(x_k)_{k\in\Nb_0}$ corresponds to the moments of a finite measure if and only if $x$ is a completely monotone sequence, i.e. if 
$\Delta^n x_k:=\Delta^{n-1} x_k-\Delta^{n-1}x_{k+1}\geq 0$ for all $k\in\Nb_0$ and $n\in\Nb$,
where $\Delta^0$ is the identity operator.
The set $K:=\{x\in\ell^\infty: \textrm{$x$ is completely monotone}\}$ is a closed convex cone in $\ell^\infty$. In particular, the set $X:=K-K=\{x-y: x,y\in K\}$ is a Banach space. The latter is known as the set of moment sequences, since its elements are exactly the sequences that are obtained as the moments of a finite signed measure on $[0,1]$. We aim to determine the dimension of the set of fixed points of $S$ in $X$.

\begin{proposition}\label{p1}
Let $\mu$ be a finite measure on $[0,1]$ and let $x:=(x_k)_{k\in\Nb_0}\in K$ be the sequence of moments of $\mu$, then
\begin{equation}\label{smu}
(Sx)_k=\int_{[0,1]} y^k \mu_S({\rm d}y), \quad k\in\Nb_0,
\end{equation}
where $\mu_S ({\rm d}y) :=\rho y(2-\rho y)\mu({\rm d}y) +(1-\rho)\mu\circ \phi^{-1}({\rm d}y)$ and $\phi:[0,1]\to[0,1]$ is defined via
$$\phi(y):=\frac{(1-\rho)y}{1-\rho y},\quad y\in[0,1].$$
In particular, $S(K)\subset K$. 
\end{proposition}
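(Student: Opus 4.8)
The plan is to verify the claimed formula \eqref{smu} by computing $(Sx)_k$ directly from the definition of $S$ and recognising each of the two resulting terms as the $k$-th moment of one of the two pieces of $\mu_S$. First I would treat the generic case $k\in\Nb$. Writing $x_j=\int_{[0,1]}y^j\mu({\rm d}y)$ and substituting into $(Sx)_k=2\rho x_{k+1}-\rho^2 x_{k+2}+(1-\rho)^{k+1}\sum_{j\geq 0}\binom{k+j-1}{j}\rho^j x_{k+j}$, the first two terms combine to $\int_{[0,1]}(2\rho y^{k+1}-\rho^2 y^{k+2})\,\mu({\rm d}y)=\int_{[0,1]}y^k\,\rho y(2-\rho y)\,\mu({\rm d}y)$, which is exactly the first-moment contribution of $\mu_S$. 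For the series term, I would pull the (justified) Fubini interchange through: $(1-\rho)^{k+1}\sum_{j\geq 0}\binom{k+j-1}{j}\rho^j\int_{[0,1]}y^{k+j}\,\mu({\rm d}y)=(1-\rho)\int_{[0,1]}y^k\,(1-\rho)^k\sum_{j\geq 0}\binom{k+j-1}{j}(\rho y)^j\,\mu({\rm d}y)$. The inner sum is the negative-binomial expansion $(1-\rho y)^{-k}$, valid since $\rho y\le\rho<1$, so the term equals $(1-\rho)\int_{[0,1]}\bigl((1-\rho)y/(1-\rho y)\bigr)^k\,\mu({\rm d}y)=(1-\rho)\int_{[0,1]}\phi(y)^k\,\mu({\rm d}y)=(1-\rho)\int_{[0,1]}y^k\,\mu\circ\phi^{-1}({\rm d}y)$, the second contribution of $\mu_S$. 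Adding the two pieces gives $(Sx)_k=\int_{[0,1]}y^k\,\mu_S({\rm d}y)$.

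Next I would check the boundary index $k=0$ separately, since $S$ is defined there by $(Sx)_0=2\rho x_1-\rho^2 x_2+(1-\rho)x_0$ rather than by the general formula. Here the first two terms again give $\int_{[0,1]}\rho y(2-\rho y)\,\mu({\rm d}y)$, and $(1-\rho)x_0=(1-\rho)\mu([0,1])=(1-\rho)\int_{[0,1]}1\,\mu\circ\phi^{-1}({\rm d}y)$ since $\phi$ maps $[0,1]$ into $[0,1]$ and $\mu\circ\phi^{-1}$ is a finite measure of the same total mass. So \eqref{smu} holds at $k=0$ as well; in fact this is precisely the $k=0$ instance of the negative-binomial computation, where the empty sum contributes $1$, confirming that the special definition of $(Sx)_0$ is the natural one.

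Finally, $S(K)\subset K$ follows immediately: $\mu_S$ is a finite non-negative measure on $[0,1]$ (it is a sum of $\rho y(2-\rho y)\,\mu({\rm d}y)$, which is non-negative because $0\le\rho y\le\rho<1$ forces $2-\rho y>0$, and $(1-\rho)\,\mu\circ\phi^{-1}$, a non-negative pushforward measure), so by Hausdorff's theorem its moment sequence $(Sx)_k$ lies in $K$. The only point requiring a word of care is the interchange of summation and integration in the series term and the convergence of the negative-binomial series at the endpoint $y=1$; both are handled by noting $\rho y\le\rho<1$ uniformly on $[0,1]$, so the geometric-type majorant $\sum_j\binom{k+j-1}{j}\rho^j=(1-\rho)^{-k}<\infty$ dominates and Tonelli/Fubini applies. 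I do not expect any genuine obstacle here; the content of the proposition is the identification of $\mu_S$, and the computation above produces it.
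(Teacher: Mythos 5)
Your proof is correct and follows essentially the same route as the paper: compute $(Sx)_k$ from the definition, sum the negative-binomial series to obtain the factor $\left(\frac{1-\rho}{1-\rho y}\right)^k$, identify that term as the $k$-th moment of the pushforward $(1-\rho)\,\mu\circ\phi^{-1}$, and conclude $S(K)\subset K$ from Hausdorff's characterisation. Your write-up merely makes explicit the Fubini/Tonelli justification and the $k=0$ boundary case, which the paper leaves implicit.
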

\begin{proof}
 From definition, we have
 $$(Sx)_k=\int_{[0,1]}y^k\left(2\rho y-\rho^2 y^2 +(1-\rho)\left(\frac{1-\rho}{1-\rho y}\right)^k \right)\mu({\rm d}y),\quad k\in\Nb_0.$$
 The first result follows. The second one is a direct consequence of \eqref{smu}.
\end{proof}
We can now identify the restriction of $S$ to $K$ with the operator $\Ss:\Ms_f([0,1])\to\Ms_f([0,1])$ on the space of finite measures defined via
\begin{equation}\label{Smu}
\Ss(\mu)({\rm d}y)=\rho y(2-\rho y)\mu({\rm d}y) +(1-\rho)\mu\circ \phi^{-1}({\rm d}y),\quad \mu\in\Ms_f([0,1]).
\end{equation}
Moreover, fixed points of $S$ in $K$ are in a one-to-one relation with fixed points of $\Ss$. Note that if $\mu$ is a fixed point of $\Ss$ then its support is invariant under $\phi$. 

We denote by $\Ms_f^{ac}$, $\Ms_f^{sc}$ and $\Ms_f^{d}$ the subsets of finite measures that are absolutely continuous, singular continuous and discrete, respectively. We know that if $\Lambda$ is the uniform distribution on $[0,1]$, then $L_\infty^\Lambda\sim\textrm{Geom}(1-\rho)$. Therefore, using Lemma \ref{transf} and Proposition \ref{p1}, we conclude that the measure $\mu\in\Ms_f^{ac}$ with density $h(y):=(1-\rho)/(1-\rho y)^2$, $y\in[0,1]$, is a fixed point of $\Ss$. 

For each $k\in\Nb$, $\phi^{(k)}$ denotes the $k$-th iteration of the function $\phi$, and $\phi^{(-k)}$ denotes its inverse.
\begin{lemma}\label{iterates}
 For all $n\in\Nb$ and $x\in[0,1]$, we have
 $$\phi^{(n)}(x)=\frac{(1-\rho)^nx}{1-x(1-(1-\rho)^n)}\quad\textrm{and}\quad \phi^{(-n)}(x)=\frac{x}{(1-\rho)^n+x(1-(1-\rho)^n)}.$$
\end{lemma}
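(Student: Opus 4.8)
The plan is to prove the formulas for $\phi^{(n)}$ and $\phi^{(-n)}$ by induction on $n\in\Nb$, exploiting the fact that $\phi$ is a M\"obius transformation. Recall $\phi(x)=(1-\rho)x/(1-\rho x)$, which corresponds (up to scaling of the representing matrix) to
$$M:=\begin{pmatrix} 1-\rho & 0\\ -\rho & 1\end{pmatrix},$$
in the sense that $\phi(x)=(ax+b)/(cx+d)$ with $\begin{pmatrix}a&b\\c&d\end{pmatrix}=M$. Since composition of M\"obius transformations corresponds to matrix multiplication, it suffices to compute $M^n$; then the claimed identity for $\phi^{(n)}$ is exactly the assertion that
$$M^n=\begin{pmatrix} (1-\rho)^n & 0\\ -(1-(1-\rho)^n) & 1\end{pmatrix}$$
up to an overall scalar (here the normalisation is already fixed by the lower-right entry being $1$, which I will check is preserved).

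First I would verify the base case $n=1$: the right-hand side of the displayed formula for $\phi^{(n)}(x)$ with $n=1$ is $(1-\rho)x/(1-x(1-(1-\rho)))=(1-\rho)x/(1-\rho x)=\phi(x)$, as required. For the inductive step, assume the formula holds for $n$; then
$$\phi^{(n+1)}(x)=\phi\big(\phi^{(n)}(x)\big)=\frac{(1-\rho)\,\dfrac{(1-\rho)^n x}{1-x(1-(1-\rho)^n)}}{1-\rho\,\dfrac{(1-\rho)^n x}{1-x(1-(1-\rho)^n)}}.$$
Clearing the inner denominator in numerator and denominator gives
$$\phi^{(n+1)}(x)=\frac{(1-\rho)^{n+1} x}{1-x(1-(1-\rho)^n)-\rho(1-\rho)^n x}=\frac{(1-\rho)^{n+1} x}{1-x\big(1-(1-\rho)^n+\rho(1-\rho)^n\big)}=\frac{(1-\rho)^{n+1} x}{1-x(1-(1-\rho)^{n+1})},$$
where in the last step I used $1-(1-\rho)^n+\rho(1-\rho)^n=1-(1-\rho)^n(1-\rho)=1-(1-\rho)^{n+1}$. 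This closes the induction for $\phi^{(n)}$.

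For $\phi^{(-n)}$, the cleanest route is to observe that $\phi^{(-n)}$ is the inverse function of $\phi^{(n)}$, so it suffices to solve $y=(1-\rho)^n x/(1-x(1-(1-\rho)^n))$ for $x$ in terms of $y$: multiplying out gives $y-yx(1-(1-\rho)^n)=(1-\rho)^n x$, hence $y=x\big((1-\rho)^n+y(1-(1-\rho)^n)\big)$, which rearranges to $x=y/\big((1-\rho)^n+y(1-(1-\rho)^n)\big)$, matching the claimed expression after renaming $y$ to $x$. (Equivalently, one notes that replacing $\rho$ by the value $\rho'$ with $1-\rho'=(1-\rho)^{-1}$ turns the $\phi^{(n)}$ formula into the $\phi^{(-n)}$ formula, reflecting $M^{-1}$ being of the same form.) There is no real obstacle here; the only point requiring a little care is the bookkeeping of the algebraic simplification $1-(1-\rho)^n+\rho(1-\rho)^n=1-(1-\rho)^{n+1}$ in the inductive step, and checking that the formulas indeed map $[0,1]$ into $[0,1]$ — the latter is immediate since for $x\in[0,1]$ and $\rho\in(0,1)$ all the quantities $(1-\rho)^n x$, $1-x(1-(1-\rho)^n)$, etc., are nonnegative and the numerator is dominated by the denominator.
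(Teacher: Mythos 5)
Your proof is correct and matches the paper's approach: the paper likewise establishes the formula for $\phi^{(n)}$ by induction and obtains $\phi^{(-n)}$ from it by inversion, only without writing out the details you supply. The Möbius-matrix remark is a pleasant extra but not needed; the inductive simplification $1-(1-\rho)^n+\rho(1-\rho)^n=1-(1-\rho)^{n+1}$ and the solve-for-$x$ step are exactly the computations the paper leaves to the reader.
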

\begin{proof}
The first identity can be shown by induction. The second one follows from the first one.  
\end{proof}
\begin{proposition}
Let $\mu\in\Ms_f^d$ be a fixed point of $\Ss$, then $\mu(\{0\})=\mu(\{1\})=0$. Moreover, if $x_0\in(0,1)$ has positive mass, then for all $k\in\Zb$,
$m_{k}:=\mu(\{\phi^{(k)}(x_0)\})>0$ and
\begin{equation}\label{mk}
m_{-k}=\frac{(1-\rho)^{k-2}(1-\rho x_0)^2\, m_0}{((1-\rho)^{k-1}+x_0(1-(1-\rho)^{k-1}))^2}\quad\textrm{and}\quad m_{k}=\frac{(1-\rho)^k(1-\rho x_0)^2\,m_0}{(1-x_0(1-(1-\rho)^{k+1}))^2}, \quad k\in\Nb.
\end{equation}
\end{proposition}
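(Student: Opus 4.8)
The plan is to read off from the fixed-point equation $\Ss(\mu)=\mu$ a single scalar recursion relating the mass of $\mu$ at a point $x$ to its mass at $\phi^{-1}(x)$, and then to iterate this recursion along the $\phi$-orbit of $x_0$.

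First I would record that $\phi$ is a strictly increasing bijection of $[0,1]$ onto itself, with derivative $\phi'(y)=(1-\rho)/(1-\rho y)^2>0$, whose only fixed points are $0$ and $1$ (indeed $\phi(y)=y$ reduces to $y=y^2$). Writing the purely atomic measure $\mu$ in the form $\mu=\sum_i m^{(i)}\delta_{x_i}$, formula \eqref{Smu} gives $\Ss(\mu)=\sum_i\rho x_i(2-\rho x_i)m^{(i)}\delta_{x_i}+(1-\rho)\sum_i m^{(i)}\delta_{\phi(x_i)}$, which is again purely atomic; hence $\Ss(\mu)=\mu$ is equivalent to equality of the point masses at every $x\in[0,1]$. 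Collecting the mass of $\Ss(\mu)$ at $x$ and using that $\phi$ is a bijection, this reads $\mu(\{x\})=\rho x(2-\rho x)\mu(\{x\})+(1-\rho)\mu(\{\phi^{-1}(x)\})$. The crucial simplification is the identity $1-\rho x(2-\rho x)=(1-\rho x)^2$, which turns the previous line into
\begin{equation*}
(1-\rho x)^2\,\mu(\{x\})=(1-\rho)\,\mu(\{\phi^{-1}(x)\}),\qquad x\in[0,1].\tag{$\ast$}
\end{equation*}

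Part one is then immediate: since $\phi(0)=0$ and $\phi(1)=1$, relation $(\ast)$ at $x=0$ gives $\mu(\{0\})=(1-\rho)\mu(\{0\})$ and at $x=1$ gives $(1-\rho)^2\mu(\{1\})=(1-\rho)\mu(\{1\})$, and $\rho\in(0,1)$ forces $\mu(\{0\})=\mu(\{1\})=0$. For part two, fix $x_0\in(0,1)$ with $m_0:=\mu(\{x_0\})>0$ and put $x_k:=\phi^{(k)}(x_0)$, $m_k:=\mu(\{x_k\})$; as $0$ and $1$ are the only fixed points of the bijection $\phi$, the points $x_k$, $k\in\Zb$, are pairwise distinct and lie in $(0,1)$. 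Specialising $(\ast)$ to $x=x_k$ and noting $\phi^{-1}(x_k)=x_{k-1}$ yields the two-sided recursion $m_k=(1-\rho)(1-\rho x_k)^{-2}m_{k-1}$; every factor is strictly positive because $1-\rho x_k>1-\rho>0$, so a forward and a backward induction starting from $m_0>0$ give $m_k>0$ for all $k\in\Zb$.

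To obtain the closed forms I would exploit the telescoping hidden in Lemma \ref{iterates}. Writing $q:=1-\rho$ and $D_j:=1-x_0+x_0q^j$ (so that $D_1=1-\rho x_0$ and $D_{k+1}=1-x_0(1-q^{k+1})$), the expressions of Lemma \ref{iterates} read $x_j=q^jx_0/D_j$, whence a one-line computation gives $1-\rho x_j=D_{j+1}/D_j$. Consequently $\prod_{j=1}^k(1-\rho x_j)=D_{k+1}/D_1$ and $\prod_{i=0}^{k-1}(1-\rho x_{-i})=D_1/D_{1-k}$, and iterating the recursion gives $m_k=q^km_0D_1^2/D_{k+1}^2$ and $m_{-k}=q^{-k}m_0D_1^2/D_{1-k}^2$ for $k\in\Nb$; rewriting $D_{k+1}=1-x_0(1-q^{k+1})$ and $q^{k-1}D_{1-k}=q^{k-1}+x_0(1-q^{k-1})$ turns these into exactly the formulas \eqref{mk}. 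The proof is essentially bookkeeping once $(\ast)$ is in hand; the only points requiring a little care are the reduction to atom-by-atom equality (using that $\Ss$ preserves pure atomicity) and keeping track of which direction of the recursion is being iterated. No convergence estimate is needed, since finiteness of $\mu$ is automatic: $m_k\to0$ geometrically as $k\to\pm\infty$.
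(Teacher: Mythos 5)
Your proof is correct and follows essentially the same route as the paper: the pointwise fixed-point relation $(1-\rho x)^2\,\mu(\{x\})=(1-\rho)\,\mu(\{\phi^{(-1)}(x)\})$ is exactly the recursion \eqref{r1a}--\eqref{r2a} used there, obtained from \eqref{Smu}, iterated along the $\phi$-orbit and evaluated via Lemma \ref{iterates}. Your unified notation $D_j$ merely streamlines the telescoping of the two identities for $1-\rho\,\phi^{(\pm i)}(x_0)$ that the paper writes out separately, and your explicit treatment of the endpoints $0,1$ spells out what the paper compresses into ``therefore $x_0\in(0,1)$''.
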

\begin{proof}
Let $\mu\in\Ms_f^d$ be a fixed point of $\Ss$ and assume that there is $x_0\in[0,1]$ with $m_0:=\mu(\{x_0\})>0$. From \eqref{Smu} we deduce that
$m_0=\rho x_0(2-\rho x_0)m_0+ (1-\rho)\mu (\{\phi^{(-1)}(x_0)\}).$
Therefore, $x_0\in(0,1)$ and  $m_{-1}:=\mu(\{\phi^{-1}(x_0)\})>0$. Iterating the argument yields, for all $k\in\Nb$, $m_{-k}:=\mu(\{\phi^{(-k)}(x_0)\})>0$ and
\begin{equation}\label{r1a}
 m_{-k}\frac{\left(1-\rho \phi^{(-k)}(x_0)\right)^2}{1-\rho}=m_{-k-1}, \quad k\in\Nb.
\end{equation}
Hence, $m_{-k}=m_0\prod_{i=0}^{k-1}(1-\rho\phi^{(-i)}(x_0))^2/(1-\rho)$. The first identity follows using that
$$1-\rho\phi^{(-i)}(x_0)=(1-\rho)\frac{(1-\rho)^{i-1}+x_0(1-(1-\rho)^{i-1})}{(1-\rho)^{i}+x_0(1-(1-\rho)^{i})},\quad i\in\Nb_0.$$
Similarly, for $k\in\Nb$, $m_{k}:=\mu(\{\phi^{(k)}(x_0)\})>0$ and
\begin{equation}\label{r2a}
 m_{k+1}=\frac{1-\rho}{\left(1-\rho \phi^{(k+1)}(x_0)\right)^2}m_{k}, \quad k\in\Nb.
\end{equation}
Thus, $m_{k}=m_0\prod_{i=1}^{k}(1-\rho)/(1-\rho\phi^{(i)}(x_0))^2$. The second identity follows using that
$$1-\rho\phi^{(i)}(x_0)=\frac{1-x_0(1-(1-\rho)^{i+1})}{1-x_0(1-(1-\rho)^{i})},\quad i\in\Nb_0.$$
\end{proof}
The next result provides a class of fixed points of $\Ss$ in $\Ms_f^d$ and of discrete measures $\Lambda$ such that $L_\infty^\Lambda$ is geometrically distributed.
\begin{proposition}
 For any $\rho, x_0\in(0,1)$ and $m_0>0$, the measure $\mu:=\mu(\rho,x_0,m_0)$ given by
 $$\mu:=\sum_{k\in\Zb}m_k \delta_{\phi^{(k)}(x_0)},$$
 where the coefficients $(m_k)_{k\in\Zb}$ are defined via Eq. \eqref{mk}, is a fixed point of $\Ss$ in $\Ms_f^d$. In addition, for
 $m_0<\sigma x_0(1-x_0)$, the equation $m_0(1-\rho x_0)^2-x_0(1-x_0)(\theta_1 \rho^2-(\sigma+\theta)\rho+\sigma)=0$ has a unique solution $\rho_*$ in $(0,1)$. Setting $\Lambda:=\mu(\rho_*,x_0,m_0)\circ \varphi^{-1}$, we have $L_\infty^\Lambda\sim\textrm{Geom}(1-\rho_*)$. 
\end{proposition}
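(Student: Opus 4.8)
The plan is to prove the claim in four steps. Step~1: $\mu=\mu(\rho,x_0,m_0)$ is a well-defined finite discrete measure. Step~2: $\Ss(\mu)=\mu$. Step~3: when $m_0<\sigma x_0(1-x_0)$, the displayed quadratic in $\rho$ has a unique root $\rho_*\in(0,1)$. Step~4: for this $\rho_*$, the measure $\Lambda:=\mu(\rho_*,x_0,m_0)\circ\varphi^{-1}$ meets condition~(4) of Proposition~\ref{geomchar}, hence $L_\infty^\Lambda\sim\mathrm{Geom}(1-\rho_*)$. For Step~1, note that $\phi$ is a M\"obius map with fixed points only $0$ and $1$, and $\phi(y)<y<\phi^{-1}(y)$ for $y\in(0,1)$ (a one-line check, since $\phi(y)/y=(1-\rho)/(1-\rho y)<1$); hence $\{\phi^{(k)}(x_0):k\in\Zb\}$ consists of pairwise distinct points of $(0,1)$, decreasing to $0$ as $k\to+\infty$ and increasing to $1$ as $k\to-\infty$. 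Substituting Lemma~\ref{iterates} into \eqref{mk} shows the denominators there converge to $(1-x_0)^2$ (as $k\to+\infty$) and $x_0^2$ (as $k\to-\infty$), so $m_k$ decays geometrically in $|k|$, $\sum_{k\in\Zb}m_k<\infty$, and $\mu\in\Ms_f^{d}$.

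For Step~2, compute $\Ss(\mu)$ from \eqref{Smu}: the push-forward term only shifts the atoms, $\mu\circ\phi^{-1}=\sum_{k\in\Zb}m_{k-1}\delta_{\phi^{(k)}(x_0)}$, so, as the atoms are distinct and $1-\rho y(2-\rho y)=(1-\rho y)^2$, the identity $\Ss(\mu)=\mu$ is equivalent to $(1-\rho\phi^{(k)}(x_0))^2 m_k=(1-\rho)m_{k-1}$ for all $k\in\Zb$ --- precisely the recursion of which \eqref{mk} is the solution (cf.\ \eqref{r1a}--\eqref{r2a}), which one may also re-verify directly by inserting \eqref{mk} together with the identities $1-\rho\phi^{(i)}(x_0)=\tfrac{1-x_0(1-(1-\rho)^{i+1})}{1-x_0(1-(1-\rho)^{i})}$. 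For Step~3, let $\psi(\rho):=m_0(1-\rho x_0)^2-x_0(1-x_0)(\theta_1\rho^2-(\sigma+\theta)\rho+\sigma)$, a polynomial of degree at most $2$. Then $\psi(0)=m_0-\sigma x_0(1-x_0)<0$ by the hypothesis, while $\psi(1)=m_0(1-x_0)^2+\theta_0 x_0(1-x_0)>0$ (using $\theta_1-\theta=-\theta_0$). A polynomial of degree at most $2$ with $\psi(0)\psi(1)<0$ has exactly one root in $(0,1)$: at least one by the intermediate value theorem, and two would force $\psi(0)$ and $\psi(1)$ to share a sign. Call this root $\rho_*$.

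For Step~4, fix $\rho=\rho_*$; then $\varphi\colon y\mapsto(1-y)/(1-\rho_* y)$ is an involution of $[0,1]$ taking $(0,1)$ onto $(0,1)$ and interchanging $0$ and $1$, so $\Lambda:=\mu\circ\varphi^{-1}$ is carried by $(0,1)$, i.e.\ $m_0=m_1=0$ in the notation of Proposition~\ref{geomchar}. By Step~2 and the identification of fixed points of $\Ss$ with those of $S$ (the discussion after Proposition~\ref{p1}), the moment sequence of $\mu$ is a fixed point of $S$; the change of variables in the proof of Lemma~\ref{transf} (using $1-x=(1-\rho_*)y/(1-\rho_* y)$ and $1-\rho_* x=(1-\rho_*)/(1-\rho_* y)$ for $x=\varphi(y)$) turns this into exactly \eqref{cg3a} for $\Lambda$. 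It then remains to verify \eqref{cg3b}: since $1-\rho_*\varphi(y)=(1-\rho_*)/(1-\rho_* y)$, its left-hand side equals $(1-\rho_*)^{-1}\sum_{k\in\Zb}(1-\rho_*\phi^{(k)}(x_0))m_k$, and I would show by telescoping that this sum equals $(1-\rho_* x_0)^2 m_0/(\rho_* x_0(1-x_0))$; then \eqref{cg3b} reads $\psi(\rho_*)=0$, which holds by the choice of $\rho_*$. By Proposition~\ref{geomchar} it follows that $L_\infty^\Lambda\sim\mathrm{Geom}(1-\rho_*)$ --- positive recurrence of $L^\Lambda$ being assured because $\Lambda$ is dust-free (a direct check on the atoms of $\mu$ near $1$, which $\varphi$ maps near $0$), so that $\sigma_\Lambda=+\infty$ (as $-\log(1-x)\ge x$) and Lemma~\ref{posrec} applies.

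The only real computation is the telescoping sum in Step~4, and I expect it to be the main obstacle. Concretely, with $b_i:=1-x_0+x_0(1-\rho_*)^i$ ($i\ge0$) and $c_j:=x_0+(1-x_0)(1-\rho_*)^{j-1}$ ($j\ge1$), Lemma~\ref{iterates} gives $1-\rho_*\phi^{(i)}(x_0)=b_{i+1}/b_i$ and $1-\rho_*\phi^{(-j)}(x_0)=(1-\rho_*)c_j/c_{j+1}$, and combining these with \eqref{mk} and the identities $b_k-b_{k+1}=\rho_* x_0(1-\rho_*)^k$, $c_j-c_{j+1}=\rho_*(1-x_0)(1-\rho_*)^{j-1}$, each summand becomes a constant multiple of $b_{k+1}^{-1}-b_k^{-1}$ (for $k\ge0$), respectively $c_{j+1}^{-1}-c_j^{-1}$ (for $k=-j<0$); the two telescoping series then sum to the stated value using $b_0=c_1=1$, $b_\infty=1-x_0$, $c_\infty=x_0$. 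Everything else --- finiteness, the fixed-point identity, and the quadratic root --- reduces to a short verification.
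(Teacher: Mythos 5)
Your proposal is correct and follows essentially the same route as the paper: geometric decay of the $m_k$ for finiteness, the one-step recursion $(1-\rho\,\phi^{(k)}(x_0))^2 m_k=(1-\rho)m_{k-1}$ for the fixed-point property, the sign change of the quadratic at $0$ and $1$, and verification of \eqref{cg3a} via the involution $\varphi$ together with the same telescoping evaluation of $\int(1-\rho_* x)^{-1}\Lambda_0({\rm d}x)$ to check \eqref{cg3b}. Your added check that $\Lambda$ is dust-free, so $\sigma_\Lambda=\infty$ and Lemma \ref{posrec} gives positive recurrence (needed for $L_\infty^\Lambda$ to exist), is a correct small refinement that the paper leaves implicit.
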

\begin{proof}
 Since $\lim_{k\to\infty} \phi^{(k)}(x_0) =0$ and $\lim_{k\to\infty} \phi^{(-k)}(x_0) =1$, then there exists $k_0\in\Nb$ such that 
$$m_k\leq c_{x_0}\left(1-\frac{\rho}{2}\right)^k m_0\quad\textrm{and}\quad m_{-k}\leq C_{x_0}\left(1-\frac{\rho}{2}\right)^k m_0,\quad k>k_0,$$
for some appropriate constants $c_{x_0},C_{x_0}>0$. Therefore, $\mu\in\Ms_f^d$. Moreover, since the coefficients $(m_k)_{k\in\Zb}$ satisfy \eqref{r1a} and \eqref{r2a}, it follows that for all $k\in\Zb$,
$\Ss\mu (\{\phi^{(-k)}(x_0)\})=\mu(\{\phi^{(-k)}(x_0)\}).$
Hence, $\mu$ is a fixed point of $\Ss$. Now, we assume that $m_0<\sigma x_0(1-x_0)$. Note that the function $r:[0,1]\to\Rb$ defined via
$r(z):=m_0(1-z x_0)^2-x_0(1-x_0)(\theta_1 z^2-(\sigma+\theta)z+\sigma)$, $z\in[0,1],$
is a quadratic polynomial with $r(0)=m_0-\sigma x_0(1-x_0)<0$ and $r(1)=m_0(1-x_0)^2+ x_0(1-x_0)\theta_0>0$, and therefore, it has a unique root $\rho_*$ in $(0,1)$. Let $\mu:=\mu(\rho_*,x_0,m_0)$. Using that $\mu$ is a fixed point of $\Ss$, one can easily show that the measure $\Lambda:=\mu\circ \varphi^{-1}$ satisfy \eqref{cg3a} for $\rho=\rho_*$. It remains to show that $\Lambda$ satisfies \eqref{cg3b} for $\rho=\rho_*$. Using that $\varphi=\varphi^{-1}$ and the definition of $\mu$, we obtain
$$\int_{(0,1)}\frac{\Lambda_0({\rm d}x)}{1-\rho_* x} =\sum_{i\in\Zb} a_i\quad \textrm{where}\quad a_i=m_i \frac{1-\rho_* \phi^{(i)}(x_0)}{1-\rho_*},\quad i\in\Zb.$$
Moreover, setting $b_n:=1-x_0(1-(1-\rho_*)^n)$ for $n\in\Nb_0$,  we get
\begin{align*}
 a_n&=\frac{m_0(1-\rho_* x_0)^2 (1-\rho_*)^{n-1}}{b_{n+1}b_n}=\frac{m_0(1-\rho_* x_0)^2}{\rho_*(1-x_0)}\left(\frac{(1-\rho_*)^{n-1}}{b_n}-\frac{(1-\rho_*)^n}{b_{n+1}}\right).
\end{align*}
Hence, $$\sum_{i\in\Nb_0}a_i=\frac{m_0(1-\rho_* x_0)^2}{\rho_*(1-\rho_*)(1-x_0)}.$$
Similarly, setting $c_n:=(1-\rho_*)^n+x_0(1-(1-\rho_*)^n)$, $n\in\Nb$, we obtain
\begin{align*}
 a_{-n}&=\frac{m_0(1-\rho_* x_0)^2 (1-\rho_*)^{n-2}}{c_{n-1}c_n}=\frac{m_0(1-\rho_* x_0)^2}{\rho_*x_0}\left(\frac{(1-\rho_*)^{n-2}}{c_{n-1}}-\frac{(1-\rho_*)^{n-1}}{c_{n}}\right).
\end{align*}
Therefore,
$$\sum_{i\in\Nb}a_{-i}=\frac{m_0(1-\rho_* x_0)^2}{\rho_*(1-\rho_*)x_0}.$$
Summarising, we have
$$\int_{(0,1)}\frac{\Lambda_0({\rm d}x)}{1-\rho_* x} =\sum_{i\in\Zb} a_i=\frac{m_0(1-\rho_* x_0)^2}{\rho_*(1-\rho_*)x_0(1-x_0)}=\frac{\theta_1 \rho_*^2-(\sigma+\theta)\rho_*+\sigma}{\rho_*(1-\rho_*)},$$
ending the proof.

\end{proof} 
As a consequence the dimension of the set of fixed points of $S$ in $X$ is infinite. In the next proposition, we show that the measure $\mu({\rm d}y)=h(y){\rm d}y$, with $h(y)=1/(1-\rho y)^2$, $y\in[0,1]$, is the unique fixed point of $\Ss$ (up to a multiplicative constant) in $\Ms_f^{ac}$ having a density which is continuous in $[0,1]$. 
\begin{proposition}
Let $h:[0,1]\to\Rb_+$ be a continuous function on $[0,1]$. The measure $\mu({\rm d}y)=h(y){\rm d}y$ on $[0,1]$ is a fixed point of $\Ss$ if and only if 
$$h(y)=\left(\frac{1-\rho}{1-\rho y}\right)^2 h(1), \quad y\in[0,1].$$
\end{proposition}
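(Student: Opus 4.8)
The plan is to rewrite the equation $\Ss(\mu)=\mu$ for $\mu({\rm d}y)=h(y){\rm d}y$ as a scalar functional equation for $h$, and then exploit the dynamics of $\phi$ on $[0,1]$. The reverse implication will be essentially immediate, so the content is the forward one.

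First I would identify the density of the pushforward $\mu\circ\phi^{-1}$. Since $\phi$ is an increasing bijection of $[0,1]$ onto itself with inverse $\phi^{(-1)}(y)=y/(1-\rho+\rho y)$ (Lemma \ref{iterates} with $n=1$), the measure $\mu\circ\phi^{-1}$ is again absolutely continuous, with density $y\mapsto h(\phi^{(-1)}(y))\,(\phi^{(-1)})'(y)$. A direct computation gives
\[
(\phi^{(-1)})'(y)=\frac{1-\rho}{(1-\rho+\rho y)^2}=\frac{(1-\rho\phi^{(-1)}(y))^2}{1-\rho},
\]
where I use the identity $1-\rho\phi^{(-1)}(y)=(1-\rho)/(1-\rho+\rho y)$. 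Substituting this into \eqref{Smu} and recalling $1-\rho y(2-\rho y)=(1-\rho y)^2$, the identity $\Ss(\mu)=\mu$ --- which may be read as an equality of densities, and then holds pointwise on $[0,1]$ since both sides are continuous functions of $y$ (here I use that $h$ is continuous) --- becomes equivalent to
\[
(1-\rho y)^2 h(y)=\big(1-\rho\,\phi^{(-1)}(y)\big)^2\,h\big(\phi^{(-1)}(y)\big),\qquad y\in[0,1].
\]
In other words, setting $g(y):=(1-\rho y)^2 h(y)$, the measure $\mu$ is a fixed point of $\Ss$ if and only if $g=g\circ\phi^{(-1)}$ on $[0,1]$.

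It then remains to observe that a continuous function $g$ on $[0,1]$ with $g=g\circ\phi^{(-1)}$ must be constant. Iterating gives $g(y)=g(\phi^{(-n)}(y))$ for every $n\in\Nb$, and by Lemma \ref{iterates} one has $\phi^{(-n)}(y)\to 1$ as $n\to\infty$ for every $y\in(0,1)$; hence, by continuity of $g$, $g(y)=g(1)$ for $y\in(0,1)$, and then on all of $[0,1]$. Since $g(1)=(1-\rho)^2 h(1)$, this yields $h(y)=\big((1-\rho)/(1-\rho y)\big)^2 h(1)$. Conversely, if $h$ has this form then $g\equiv(1-\rho)^2 h(1)$ is constant, so the displayed identity holds trivially and $\mu$ is a fixed point of $\Ss$.

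I do not anticipate a genuine obstacle here: the only care needed is in the change-of-variables formula for the pushforward density (together with the elementary simplifications $1-\rho y(2-\rho y)=(1-\rho y)^2$ and $1-\rho\phi^{(-1)}(y)=(1-\rho)/(1-\rho+\rho y)$), and in the remark that almost-everywhere equality of the two densities upgrades to everywhere equality because both are continuous on the compact interval $[0,1]$.
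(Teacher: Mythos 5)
Your proof is correct, and it is a streamlined variant of the paper's argument. Both proofs start from the same pointwise functional equation (equality of the continuous densities of $\mu$ and $\Ss(\mu)$, using the change-of-variables density of $\mu\circ\phi^{-1}$ and the simplification $1-\rho y(2-\rho y)=(1-\rho y)^2$), and both ultimately exploit that $\phi^{(-n)}(y)\to 1$ together with continuity of $h$ at $1$. The difference is in how the iteration is handled: the paper iterates the relation $h(y)=\frac{(1-\rho)^2}{(1-\rho y)^2(1-\rho+\rho y)^2}h(\phi^{(-1)}(y))$ directly and must evaluate the resulting product over orbits, which it does via a telescoping identity derived from Lemma \ref{iterates}; you instead observe that the relation says exactly that $g(y):=(1-\rho y)^2h(y)$ is invariant under $\phi^{(-1)}$ (using $(1-\rho)\,(\phi^{(-1)})'(y)=(1-\rho\phi^{(-1)}(y))^2$), so the product computation disappears and the conclusion reduces to the statement that a continuous $\phi^{(-1)}$-invariant function on $[0,1]$ is constant, because every orbit in $(0,1)$ accumulates at $1$. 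Identifying the conserved quantity $g$ is a genuinely cleaner organization of the same mechanism; the paper's explicit telescoping computation buys nothing extra here, though the identities it uses reappear elsewhere (e.g.\ in Remark \ref{remgeo}, where continuity at $1$ fails and the invariant function need not be constant -- your argument makes transparent exactly where that construction evades the uniqueness conclusion). One minor point you handled correctly and should keep: the passage from equality of measures to the pointwise identity uses continuity of both densities, and the extension of $g\equiv g(1)$ from $(0,1)$ to the endpoint $y=0$ (where the orbit does not converge to $1$) uses continuity of $g$ at $0$.
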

\begin{proof}
Proving that the function $h$ defined in the statement leads to a fixed point of $\Ss$ is straightforward. Now, assume that $\mu({\rm d}y)=h(y){\rm d}y$ is a fixed point of $\Ss$. It follows that
$$h(y)=\frac{1-\rho}{(1-\rho y)^2} h(\phi^{(-1)}(y)){\phi^{(-1)}}^{\prime}(y)=\frac{(1-\rho)^2}{(1-\rho y)^2(1-\rho+\rho y)^2}h(\phi^{(-1)}(y)), \quad y\in(0,1).$$
Iterating this equation, we obtain 
\begin{equation}\label{prod}
 h(y)=\left(\prod_{k=0}^n\frac{1-\rho}{(1-\rho \phi^{(-k)}(y))(1-\rho+\rho \phi^{(-k)}(y))}\right)^2 h(\phi^{-{(n+1)}}(y)),\quad y\in(0,1).
\end{equation}
Using Lemma \ref{iterates}, we deduce that
$$\frac{1-\rho}{(1-\rho \phi^{(-k)}(y))(1-\rho+\rho \phi^{(-k)}(y))}=\frac{a_k^2(y)}{a_{k+1}(y) a_{k-1}(y)},\quad k\in\Nb_0,$$
where $a_k(y):=(1-\rho)^k+y(1-(1-\rho)^k)$, $k\geq -1$. Hence,
$$\prod_{k=0}^n\frac{1-\rho}{(1-\rho \phi^{(-k)}(y))(1-\rho+\rho \phi^{(-k)}(y))}=\frac{a_0(y)}{a_{-1}(y)}\frac{a_n(y)}{a_{n+1}(y)}=\frac{1-\rho}{1-\rho y}\frac{a_n(y)}{a_{n+1}(y)}.$$
Letting $n\to\infty$ in \eqref{prod} yields the result.
\end{proof}
\begin{remark}\label{remgeo}
Consider a measure of the form $\mu({\rm d}y)=h(y){\rm d}y$ with $h:[0,1]\to\Rb_+$ being measurable in $[0,1]$ and continuous in an open interval $I\subset (0,1)$. On can easily check that if $\mu$ is a fixed point of $\Ss$, then $h$ is continuous in $(0,1)$. However, for the uniqueness (up to a multiplicative constant) of such a fixed point, the continuity of $h$ at $1$ is crucial. Indeed, let $x_0\in(0,1)$ and let $p:[0,1]\to\Rb_+$ be a continuous function on $[0,1]$ such that $p(0)=p(1)$. We define the function $C:(0,1)\to\Rb_+$ via
$$C(x):=p\left(\frac{\phi^{(k)}(x)-x_0}{\phi(x_0)-x_0}\right), \quad\textrm{for} \quad x\in[\phi^{(-k)}(x_0),\phi^{(-k+1)}(x_0)),\, k\in\Zb.$$
The function $C$ is bounded, continuous in $(0,1)$ and constant on each set of the form $\{\phi^{(i)}(y):i\in\Zb\}$ for some $y\in(0,1)$. Hence, the function $h:(0,1)\to\Rb_+$ defined via $h(y):=C(y)/(1-\rho y)^2$, $y\in(0,1)$, satisfies
$$h(y)=\frac{1-\rho}{(1-\rho y)^2} \,h(\phi^{(-1)}(y))\,{\phi^{(-1)}}^{\prime}(y),\quad y\in(0,1).$$
Therefore, the measure $\mu({\rm d}y):=h(y){\rm d}y$ is a fixed point of $\Ss$. Moreover, one can prove that all the fixed points of $\Ss$ in $\Ms_f^{ac}$ having a density which is continuous in $(0,1)$ are of this form. As a consequence, the unique $\beta(a,b)$-model leading to a geometric distribution is the $\beta(1,1)$-model, i.e. the Bolthausen--Sznitman model.
\end{remark}

\section{Solving the Fearnhead-type recursion for the Moran model}\label{s4}
In the Moran model with mutation and selection, the stationary distribution of the block counting process is characterised by Equations \eqref{frmm} and \eqref{bcmm}, which, using that $a_n^N=\sum_{k=n+1}^N p_k^N$, turn out to be equivalent to  
\begin{equation}\label{prmm}
 \left(\frac{n}{N}+u_1\right)p_n^N=\frac{(N-n+1)s}{N}p_{n-1}^N-u_0\sum\limits_{\ell=n}^N p_\ell^N,\quad n\in\{2,\ldots,N-1\},
\end{equation}
together with the boundary conditions
\begin{equation}\label{pbcmm}
\sum\limits_{i=1}^N p_i^N=1 \quad\textrm{and}\quad (1+u)p_N^N=\frac{s}{N}p_{N-1}^N.
\end{equation}
 Let $D:=\{z\in\Cb: |z|<1\}$ denote the open unit disk and set $D_*=D\setminus\{z: \textrm{Im}(z)=0,\, \textrm{Re}(z)\leq 0\}$. For $z_1,z_2\in D_*$ and any holomorphic function $f:D_*\rightarrow \Cb$ we denote by $\int_{z_1}^{z_2} f(\xi){\rm d}\xi$ the integral of $f$ along any smooth path  in $D_*$ connecting $z_1$ and $z_2$. The following is the main result of this section. It provides explicit expressions for the stationary distribution $(p_n^N)_{n\in[N]}$ and its probability generating function $g_N:\Cb\rightarrow\Cb$, defined via $g_N(z):=\sum_{n=1}^{N} p_n^N\, z^n.$
\begin{theorem}\label{mainMM}
For the Moran model with selection parameter $s>0$ and mutation parameters $u_0,u_1\geq 0$ the following holds
\begin{itemize}
 \item[(i)] If $u_0=0$, then 
\begin{equation}\label{pmfMMu0}
p_n^N=\frac{1}{\pFq{2}{1}{1;,1-N}{Nu+2}{-s}}\frac{(N-1)_{n-1}^{\downarrow}}{(Nu+2)_{n-1}^{\uparrow}}\,s^{n-1},\quad n\in[N],
\end{equation}
where $\hf$ is the Gauss hypergeometric function (see Appendix \ref{Ab}). In particular, we have
\begin{equation}\label{pgfMMu0}
  g_N(z)=\frac{\pFq{2}{1}{1;,1-N}{Nu+2}{-sz}}{\pFq{2}{1}{1;,1-N}{Nu+2}{-s}},\quad z\in \Cb.
\end{equation}
 \item[(ii)] If $u_0>0$, then for all $z\in D_*$
\begin{equation}\label{pgfMM}
 g_N(z)=\frac{N u_0 \,I_0^N}{s(I_0^N-I_1^N)}\frac{\left(z+\frac{1}{s}\right)^{(1+u_1+\rho_0)N}}{z^{Nu_1}\,(1-z)^{N\rho_0}}\int_{0}^z\frac{\left(\frac{I_1^N}{I_0^N}-\xi\right)\,\xi^{Nu_1}\,(1-\xi)^{N\rho_0-1}}{\left(\xi+\frac{1}{s}\right)^{(1+u_1+\rho_0)N+1}}\,{\rm d}\xi,
\end{equation}
with $\rho_0:=u_0/(1+s)$ and
$I_i^N:=\int_{0}^1\frac{y^{Nu_1+i}\,(1-y)^{N\rho_0-1}}{(y+\frac{1}{s})^{(1+u_1+\rho_0)N+1}}\,{\rm d}y$, $i\in\{0,1\}$.
Moreover, 
\begin{equation}\label{pmfMM}
 p_n^N=\frac{Nu_0 }{I_0^N-I_1^N}\left[\frac{I_1^N}{Nu_1+1}q_{n,1}^{N}-\frac{I_0^N}{Nu_1+2}q_{n,2}^{N}\right], \quad n\in[N],
\end{equation}
 where $q_{1,1}^{N}:=1$, $q_{1,2}^{N}:=0$, and for $n\in\{2,\ldots,N\}$
\begin{align*}
q_{n,i}^{N}&:=\sum\limits_{m=0}^{n-i}\frac{(-N+i-1)_m^\uparrow}{(Nu_1+i+1)_m^\uparrow}  (-s)^m\pFq{3}{2}{m+1;,1-N\rho_0;,m-n+i}{Nu_1+m+i+1;,1}{1+s},\quad i\in\{0,1\},
\end{align*}
where $\HF$ is the generalised hypergeometric function (see Appendix \ref{Ab}). 
\end{itemize}
\end{theorem}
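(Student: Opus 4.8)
The plan is to treat the two cases separately, working with the equivalent system \eqref{prmm}--\eqref{pbcmm}. Observe first that \eqref{prmm} in fact also holds for $n=N$, where it reduces to the second identity in \eqref{pbcmm}; thus $(p_n^N)_{n\in[N]}$ is the unique vector satisfying \eqref{prmm} for $n\in\{2,\dots,N\}$ together with $\sum_{n=1}^N p_n^N=1$. For part (i), when $u_0=0$ the tail sum in \eqref{prmm} disappears and the recursion becomes the one-step relation $(n/N+u_1)p_n^N=\tfrac{(N-n+1)s}{N}p_{n-1}^N$, so $p_n^N=p_1^N\,s^{n-1}\,(N-1)_{n-1}^{\downarrow}/(Nu+2)_{n-1}^{\uparrow}$ (with $u=u_1$ here). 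Summing over $n$ and using $(1)_k^{\uparrow}=k!$ and $(1-N)_k^{\uparrow}=(-1)^k(N-1)_k^{\downarrow}$ identifies $(p_1^N)^{-1}=\sum_{n=1}^N s^{n-1}(N-1)_{n-1}^{\downarrow}/(Nu+2)_{n-1}^{\uparrow}$ with $\hf(1,1-N;Nu+2;-s)$, which gives \eqref{pmfMMu0}; the same computation applied to $\sum_n p_n^N z^n$ yields \eqref{pgfMMu0}.

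For part (ii) I would pass to the generating function $g_N$. Multiplying \eqref{prmm} by $z^n$, summing over $n\in\{2,\dots,N\}$, interchanging the order of summation in the tail term, and using $\sum_\ell p_\ell^N=1$, one finds that $g_N$ solves the first-order linear ODE
$$z(1+sz)\,g_N'(z)+N\Bigl(u_1-sz-\frac{u_0 z}{1-z}\Bigr)g_N(z)=(1+Nu_1)\,p_1^N\,z-\frac{Nu_0 z^2}{1-z}.$$
From the partial fraction identity $\tfrac{u_1-sz-u_0z/(1-z)}{z(1+sz)}=\tfrac{u_1}{z}-\tfrac{s(1+u_1+\rho_0)}{1+sz}-\tfrac{\rho_0}{1-z}$ one reads off the integrating factor $M(z)=z^{Nu_1}(1-z)^{N\rho_0}(1+sz)^{-N(1+u_1+\rho_0)}$. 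Since $g_N$ is a polynomial with $g_N(0)=0$, the integration constant must vanish, and writing $\alpha:=(1+Nu_1)p_1^N$, $\beta:=\alpha+Nu_0$ and collecting the powers of $1-\xi$ gives
$$g_N(z)=\frac{(1+sz)^{N(1+u_1+\rho_0)}}{z^{Nu_1}(1-z)^{N\rho_0}}\int_0^z\frac{\xi^{Nu_1}(1-\xi)^{N\rho_0-1}(\alpha-\beta\xi)}{(1+s\xi)^{N(1+u_1+\rho_0)+1}}\,{\rm d}\xi.$$

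Because $g_N$ is analytic at $z=1$ whereas the prefactor contributes a factor $(1-z)^{-N\rho_0}$ with $N\rho_0=Nu_0/(1+s)>0$, the integral of the integrand over $[0,1]$ must vanish. Using $1+s\xi=s(\xi+1/s)$ one has $\int_0^1\xi^{Nu_1+i}(1-\xi)^{N\rho_0-1}(1+s\xi)^{-N(1+u_1+\rho_0)-1}{\rm d}\xi=s^{-N(1+u_1+\rho_0)-1}I_i^N$, so the vanishing condition reads $\alpha I_0^N=\beta I_1^N$; since $0<I_1^N<I_0^N$ this forces $\alpha=Nu_0 I_1^N/(I_0^N-I_1^N)$, hence $\beta=Nu_0 I_0^N/(I_0^N-I_1^N)$ and $\alpha-\beta\xi=\tfrac{Nu_0 I_0^N}{I_0^N-I_1^N}\bigl(\tfrac{I_1^N}{I_0^N}-\xi\bigr)$. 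Substituting this and replacing each $1+s\xi$ by $s(\xi+1/s)$ turns the formula above into \eqref{pgfMM}; the identity is first obtained on $D_*$, where the non-integer powers take their principal branches and the integration path stays in $D_*$, and it extends to all of $\Cb$ because $g_N$ is a polynomial. As the integration constant is fixed by $g_N(0)=0$ and $p_1^N$ by analyticity at $z=1$, the solution is unique and therefore equals the stationary generating function.

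It remains to extract $p_n^N=[z^n]g_N(z)$ from \eqref{pgfMM}, which yields \eqref{pmfMM}. Substituting $\xi=zt$ turns the integral into $z^{Nu_1+1}\int_0^1 t^{Nu_1}(1-zt)^{N\rho_0-1}(I_1^N/I_0^N-zt)(1+szt)^{-N(1+u_1+\rho_0)-1}{\rm d}t$, and after cancelling the powers of $z$ one is left with the product of $(1+sz)^{N(1+u_1+\rho_0)}$, $(1-z)^{-N\rho_0}$ and this Beta-type integral. Expanding the three (generalised) binomial series, integrating the $t$-monomials via $\int_0^1 t^r{\rm d}t=(r+1)^{-1}$ and collecting the coefficient of $z^n$ produces a double convolution of binomial sums, which after reindexing and a standard hypergeometric transformation (carrying the argument from $-s$ to $1+s$) reorganises into the terminating $\HF$'s appearing in $q_{n,i}^N$, the boundary values $q_{1,1}^N=1$ and $q_{1,2}^N=0$ just recording $p_1^N$; alternatively, clearing $1-z$ in the ODE gives a three-term recursion for $p_n^N$ with coefficients linear in $n$, which can be solved directly. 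The genuinely laborious step is this last one --- keeping track of the three Cauchy products and matching the resulting finite sums with $\HF$'s at $1+s$; everything before it is routine bookkeeping, the only delicate points being the branch cuts needed to make sense of \eqref{pgfMM} on $D_*$ and the analyticity argument that pins down $p_1^N$.
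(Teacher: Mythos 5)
Your treatment of part (i), and of the generating-function formula \eqref{pgfMM} in part (ii), is correct and is essentially the paper's own argument: your observation that \eqref{prmm} extended to $n=N$ absorbs the second condition in \eqref{pbcmm} is valid, your ODE is exactly \eqref{edomm} divided by $1-z$, your integrating factor is the paper's separation-of-variables solution, and pinning $p_1^N$ through analyticity (boundedness) of the polynomial $g_N$ at $z=1$ is the same mechanism the paper invokes via $g_N(1)=1$, leading to the identical relation $\alpha I_0^N=\beta I_1^N$ and the same value of $p_1^N$. (Two small points: the identity \eqref{pgfMM} does not ``extend to all of $\Cb$'' --- its right-hand side has branch points at $0$, $1$ and $-1/s$ --- but the theorem only asserts it on $D_*$, so this is harmless.)

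The genuine gap is the derivation of \eqref{pmfMM}. You reduce it to ``expanding the three (generalised) binomial series \dots which after reindexing and a standard hypergeometric transformation (carrying the argument from $-s$ to $1+s$) reorganises into the terminating $\HF$'s'', but this transformation is never identified, and it is precisely where the content of \eqref{pmfMM} lies: a naive expansion after $\xi=zt$ produces, for the coefficient of $z^n$, a fourfold finite sum in which neither the termination structure of $q_{n,i}^N$ nor the argument $1+s$ is visible, and no named identity is offered that collapses it. The paper's route is concrete: the M\"obius substitution of Lemma \ref{lA1} (not $\xi=zt$) identifies the integral in \eqref{pgfMM} with an Appell function $F_1$ via Corollary \ref{IvsF1}, valid for $|z|<1/\sqrt{1+2s}$; the transformation formula \cite[Eq.~(25)]{A82} for $F_1$ then moves the arguments to $-sz$ and $-(1+s)z/(1-z)$; only after this step does expanding the $F_1$ series together with $(1-z)^{-m}$ yield coefficients in which the inner sums terminate (because $m-\ell+i\le 0$) and assemble into $\HF(\cdot;1+s)$, giving $q_{n,i}^N$. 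Your fallback suggestion --- that clearing $1-z$ in the ODE gives a recursion for $p_n^N$ ``which can be solved directly'' --- is likewise unsubstantiated: the resulting coefficient recursion is not two-term and solving it is the very problem the theorem addresses. So the argument for \eqref{pmfMM} needs to be supplied, either by the Appell-function identities above or by an explicit verification of the claimed reorganisation.
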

\begin{remark}
When $u=0$, Eq. \eqref{pmfMMu0} implies that $L_\infty^N$ is a binomial random variable with parameters $N$ and $s/(1+s)$ conditioned to be strictly positive (see also \cite[Sect.~3.1]{FC17}). Hence, \cite[Prop.~4.7]{FC17} leads to the following expression for the absorption probability of $X^N$ at $N$ (c.f \cite[Chap. 6.1.1]{D08})
$$P_k(X^N_\infty=N)=\frac{(1+s)^{N}-(1+s)^{N-k}}{(1+s)^N-1}.$$
\end{remark}
The proof of Theorem \ref{mainMM} is based on the following result.
\begin{lemma} The generating function $g_N $ satisfies the ordinary differential equation
\begin{align}\label{edomm}
 z(1-z)(1+sz)\,g_N'(z)&=-N(sz^2-(s+u)z+u_1)\,g_N(z)+\left(1+N u_1\right)p_1^N\,z(1-z)-Nu_0\, z^2,
\end{align}
on $D$, with boundary conditions $g_N(0)=0$ and $g_N(1)=1$.
\end{lemma}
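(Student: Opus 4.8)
The plan is to convert the Fearnhead-type recursion \eqref{prmm}, together with the boundary conditions \eqref{pbcmm}, into the differential equation \eqref{edomm} by the standard device of multiplying by $z^n$ and summing over $n$. First I would record the elementary generating-function identities
\[
\sum_{n=1}^N n\,p_n^N\,z^n=z\,g_N'(z),\qquad \sum_{n=1}^N (N-n)\,p_n^N\,z^n=N\,g_N(z)-z\,g_N'(z),
\]
so that, after relabelling the summation index, the shifted term $\sum_n\frac{(N-n+1)s}{N}\,p_{n-1}^N\,z^n$ becomes $sz\,g_N(z)-\tfrac{s}{N}\,z^2g_N'(z)$ up to a boundary correction at $n=N$. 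The decisive ingredient is the tail term: writing $T_n:=\sum_{\ell=n}^N p_\ell^N$, an application of Fubini's theorem together with $\sum_{\ell=1}^N p_\ell^N=1$ gives
\[
\sum_{n=1}^N T_n\,z^n=\frac{z}{1-z}\,\bigl(1-g_N(z)\bigr),
\]
which is the source of the factor $1-z$ in \eqref{edomm}.

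Since \eqref{prmm} is valid only for $n\in\{2,\dots,N-1\}$, the next step is to account carefully for the missing endpoints $n=1$ and $n=N$. Summing $z^n$ times \eqref{prmm} over $n=2,\dots,N-1$ and rewriting each piece through the identities above produces, besides the bulk terms in $g_N$ and $g_N'$, a boundary contribution $-(\tfrac1N+u_1)\,p_1^N\,z$ together with several $z^N$-terms carrying $p_N^N$ and $p_{N-1}^N$. Here I would use the second boundary condition $(1+u)\,p_N^N=\tfrac{s}{N}\,p_{N-1}^N$ to check that all $z^N$-contributions cancel, and the normalisation $T_1=\sum_\ell p_\ell^N=1$ to dispose of the $n=1$ term coming from the tail sum. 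The constant $p_1^N=g_N'(0)$ survives and becomes the coefficient of the inhomogeneous term.

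In the last step I would multiply the resulting relation by $N(1-z)$ to clear the denominator produced by the tail sum and collect powers of $z$. The coefficient of $g_N(z)$ turns out to be $N\bigl[(1-z)(sz-u_1)+u_0z\bigr]$, which simplifies to $-N\bigl(sz^2-(s+u)z+u_1\bigr)$ on using $u=u_0+u_1$, while the remaining terms collapse to $(1+Nu_1)\,p_1^N\,z(1-z)-Nu_0\,z^2$; this is exactly \eqref{edomm}. The boundary conditions are immediate: $g_N(0)=0$ because $g_N$ has no constant term, and $g_N(1)=\sum_{n=1}^N p_n^N=1$ by \eqref{pbcmm}. (Since both sides are polynomials in $z$, the identity in fact holds on all of $\Cb$, a fortiori on $D$.)

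The one genuinely delicate point I anticipate is the bookkeeping of the two boundary terms at $n=1$ and $n=N$, and in particular the verification that the $z^N$-terms cancel by virtue of $(1+u)\,p_N^N=\tfrac{s}{N}\,p_{N-1}^N$; the rest is a routine, if somewhat lengthy, generating-function computation. One could instead first eliminate the tail sum from \eqref{prmm} by differencing consecutive equations, arriving at a three-term recursion for the $p_n^N$, but this merely relocates the boundary bookkeeping and does not appear to shorten the argument.
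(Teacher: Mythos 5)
Your proposal is correct and follows essentially the same route as the paper: multiply \eqref{prmm} by $z^n$, sum over $n\in\{2,\dots,N-1\}$, express the three pieces (including the tail sum, which yields the $z/(1-z)$ factor) through $g_N$ and $g_N'$, cancel the $z^N$-boundary terms via $(1+u)p_N^N=\tfrac{s}{N}p_{N-1}^N$, and clear denominators to obtain \eqref{edomm}. The bookkeeping you flag as delicate is exactly what the paper's proof carries out, and your stated identities and the final collection of coefficients are all correct.
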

\begin{proof}
The boundary conditions follow from the definition of $g_N$. Multiplying \eqref{prmm} with $z^n$ and summing over all $n\in\{2,\ldots,N-1\}$ yields
$$ \sum\limits_{n=2}^{N-1}\left(\frac{n}{N}+u_1\right)p_n^N z^n=\frac{s}{N}\sum\limits_{n=2}^{N-1}(N-n+1)p_{n-1}^N z^n-u_0\sum\limits_{n=2}^{N-1} z^n\sum\limits_{\ell=n}^N p_\ell^N.$$
The left hand side is equal to
$$\sum\limits_{n=2}^{N-1}\left(\frac{n}{N}+u_1\right)p_n^N z^n=\frac{z}{N}g_N'(z)+u_1 g_N(z)-\left(\frac{1}{N}+u_1\right)p_1^N z- (1+u_1)p_N^N z^N.$$
Moreover,
$$\frac{s}{N}\sum\limits_{n=2}^{N-1}(N-n+1)p_{n-1}^N z^n=-\frac{sz^2}{N}g_N'(z)+sz\,g_N(z)-\frac{s}{N}p_{N-1}^N \,z^N,$$
$$\sum\limits_{n=2}^{N-1} z^n\sum\limits_{\ell=n}^N p_\ell^N=-\frac{z}{1-z}g_N(z)-p_N^N z^N +\frac{z^2}{1-z}.$$
The result follows putting everything together and using Eq. \eqref{pbcmm}.
\end{proof}
\begin{proof}[Proof of Theorem \ref{mainMM}]
 {{\bf (i)}:} Formula \eqref{pmfMMu0} is obtained by iteration of \eqref{prmm} and using \eqref{pbcmm}. Formula \eqref{pgfMMu0} is a direct consequence of Eq. \eqref{pmfMMu0}.
 
  {{\bf (ii)}:} Since both sides of \eqref{pgfMM} are analytic in $D_*$, it suffices to show that they coincide on the real interval $(0,1)$. Thus, we have to solve \eqref{edomm} in $(0,1)$. Separation of variables in the homogeneous equation $x(1-x)(1+sx)\,g'(x)+N(sx^2-(s+u)x+u_1)\,g(x)=0$ on $(0,1)$ implies that its basic solution is given by
 $g(x)=(x+s^{-1})^{(1+u_1+\rho_0)N}x^{-Nu_1}(1-x)^{-N\rho_0}$, $x\in (0,1)$.
 Hence, for any $x_0\in (0,1)$, the variation of constants method yields
 $$g_N(x)=\frac{1}{s}\frac{\left(x+\frac1{s}\right)^{(1+u_1+\rho_0)N}}{x^{Nu_1}(1-x)^{N\rho_0}}\left[c_{x_0}-\int_{x}^{x_0}\frac{(\beta_N-\alpha_N\,\xi)\xi^{Nu_1}(1-\xi)^{N\rho_0-1}}{\left(\xi+\frac1{s}\right)^{(1+u_1+\rho_0)N+1}}{\rm d}\xi\right], \quad x\in(0,x_0),$$
where $c_{x_0}$ is a constant, $\alpha_N:=Nu_0+(1+Nu_1) p_1^N$ and $\beta_N:=(1+Nu_1)p_1^N$. Moreover, the boundary condition $g_N(0)=0$ implies that
\begin{equation}\label{prepgfMM}
g_N(x)=\frac{1}{s}\frac{\left(x+\frac1{s}\right)^{(1+u_1+\rho_0)N}}{x^{Nu_1}(1-x)^{N\rho_0}}\left[\,\int_{0}^{x}\frac{(\beta_N-\alpha_N\,\xi)\xi^{Nu_1}(1-\xi)^{N\rho_0-1}}{\left(\xi+\frac1{s}\right)^{(1+u_1+\rho_0)N+1}}{\rm d}\xi\right], \quad x\in (0,x_0).
\end{equation}
Since $x_0\in(0,1)$ is arbitrary, the previous identity holds for all $x\in(0,1)$. Letting $x\to 1$ and using that $g_N(1)=1$, we infer that $\beta_N I_0^N= \alpha_N I_1^N$. Hence, $p_1^N=Nu_0 I_1^N/((Nu_1+1)(I_0^N-I_1^N))$. Plugging the resulting expressions for $\alpha_N$ and $\beta_N$ in \eqref{prepgfMM} shows that \eqref{pgfMM} holds in $(0,1)$, and thus in $D_*$. 

Note that from \eqref{pgfMM} and Corollary \ref{IvsF1}, we have for all $z\in \{w\in D: |w|<1/\sqrt{1+2s}\}$
 \begin{equation}\label{pnf}
  g_N(z)=\frac{Nu_0 }{I_0^N-I_1^N}\left[\frac{I_1^N}{Nu_1+1}q_{N,1}(z)-\frac{I_0^N}{Nu_1+2}q_{N,2}(z)\right],
 \end{equation}
 where for $i\in\{1,2\}$
 $$q_{N,i}(z)=\frac{z^i\left(sz+1\right)^{N(1+\rho_0)-i}}{(1-z)^{N\rho_0}}\pFq{}{1}{Nu_1+i\,;,-N+i-1\,;,1-N\rho_0}{Nu_1+i+1}{\frac{z}{z+\frac1s}\,;,\frac{\left(1+\frac1s\right)z}{z+\frac1s}}.$$
 From \cite[Eq.~(25)]{A82}, we get for $z$ sufficiently small
 $$q_{N,i}(z)=\frac{z^i}{1-z}\pFq{}{1}{1\,;,-N+i-1\,;,1-N\rho_0}{Nu_1+i+1}{-sz\,;,\frac{-\left(1+s\right)z}{1-z}}.$$
 Since $(1-z)^{-m}=\sum_{k=0}^\infty \frac{(m)^{\uparrow}_k}{k!}z^k$, the series representation of the Appell function $F_1$ (see \eqref{def1}) yields
 $$q_{N,i}(z)=\sum\limits_{\ell=i}^\infty z^{\ell}\sum\limits_{m=0}^{\ell-i}\frac{(-N+i-1)_m^\uparrow}{(Nu_1+i+1)_m^\uparrow}  (-s)^m\pFq{3}{2}{m+1;,1-N\rho_0;,m-\ell+i}{Nu_1+m+i+1;,1}{1+s}.$$
 The result is obtained plugging the previous identity in \eqref{pnf} and comparing the resulting series expansion for $g_N$ with its definition.
\end{proof}
Now, we establish some consequences of the previous results.
\begin{corollary}[Mean]\label{mMM}
 The random variable $L_\infty^N$ has mean
 $$E\left[L_\infty^N\right]=\frac{N(s+u_0-u_1)+(1+Nu_1)p_1^N}{1+s+Nu_0}.$$
\end{corollary}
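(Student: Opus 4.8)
The plan is to bypass the explicit formulas of Theorem~\ref{mainMM} entirely and to read off the mean directly from the differential equation \eqref{edomm}. Since $g_N(z)=\sum_{n=1}^N p_n^N z^n$ is a polynomial, $E[L_\infty^N]=\sum_{n=1}^N n\,p_n^N=g_N'(1)$, so it suffices to evaluate $g_N'(1)$.

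First I would rewrite \eqref{edomm} as $z(1-z)(1+sz)\,g_N'(z)=R(z)$, where $R(z):=-N(sz^2-(s+u)z+u_1)\,g_N(z)+(1+Nu_1)p_1^N\,z(1-z)-Nu_0\,z^2$, and check that $R(1)=0$: using $g_N(1)=1$ and $u=u_0+u_1$ one gets $R(1)=-N(s-(s+u)+u_1)-Nu_0=Nu_0-Nu_0=0$. Hence both sides of \eqref{edomm} vanish at $z=1$, the left-hand side carrying the explicit factor $1-z$, so dividing by $1-z$ and letting $z\to1$ is legitimate.

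Next, dividing \eqref{edomm} by $1-z$ gives $z(1+sz)\,g_N'(z)=R(z)/(1-z)$ for $z\neq1$; letting $z\to1$ the left-hand side tends to $(1+s)g_N'(1)$, while by L'H\^opital's rule (or a first-order Taylor expansion of $R$ about $z=1$) the right-hand side tends to $-R'(1)$. A direct computation of $R'(1)$, again using $g_N(1)=1$, gives $-R'(1)=N(s+u_0-u_1)+(1+Nu_1)p_1^N-Nu_0\,g_N'(1)$. Equating the two limits yields the linear relation $(1+s)g_N'(1)=N(s+u_0-u_1)+(1+Nu_1)p_1^N-Nu_0\,g_N'(1)$, and solving it for $g_N'(1)$ produces the claimed formula.

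The computation is routine; the only point requiring a little care -- and the origin of the $+Nu_0$ in the denominator -- is that the polynomial multiplying $g_N$ in \eqref{edomm} itself vanishes at $z=1$ (there it equals $Nu_0$ up to sign), so differentiating $R$ re-introduces a term proportional to $g_N'(1)$ which must be collected on the left-hand side. No separate argument is needed for the degenerate case $u_0=0$: the same derivation applies verbatim and yields $E[L_\infty^N]=(N(s-u_1)+(1+Nu_1)p_1^N)/(1+s)$.
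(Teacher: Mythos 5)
Your argument is correct and is essentially the paper's own proof: both extract $E[L_\infty^N]=g_N'(1)$ from the ODE \eqref{edomm} by letting $z\to1$, where the paper's rearranged identity \eqref{pp} (with the term $u_0(1-g_N(z))/(1-z)\to u_0\,g_N'(1)$) plays exactly the role of your division by $1-z$ plus L'H\^opital, producing the same $Nu_0\,g_N'(1)$ term that yields the denominator $1+s+Nu_0$.
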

\begin{proof}
 Note that $E[L_\infty^N]=\lim_{z\rightarrow 1} g_N'(z)$. In addition, Eq. \eqref{edomm} yields
\begin{equation}\label{pp}
 \frac{(1+sz)\,g_N'(z)}{N}=sg_N(z)+u_0-u_1 \frac{g_N(z)}{z} -\frac{u_0(1-g_N(z))}{1-z}+\frac{\left(1+N u_1\right)p_1^N\,}{N}.
\end{equation}
The result follows by letting $z\to1$ in the previous identity.
\end{proof}
\begin{proposition}[Factorial moments]\label{fmMM}
The factorial moments of $L_\infty^N$ satisfy for $n\in[N-1]$
$$((n+1)(1+s)+Nu_0)\,E\left[(L_\infty^N)_{n+1}^{\downarrow}\right]=(n+1)(N-n)s\,E\left[(L_\infty^N)_{n}^{\downarrow}\right]- N(n+1)u_1\,E\left[(L_\infty^N-1)_n^{\downarrow}\right],$$
where $()^{\downarrow}$ is the falling factorial (see Appendix \ref{Ab}). Moreover, for $n\in[N]$
 \begin{enumerate}
  \item if $u_0=0$, then
   \begin{equation*}
E[(L_\infty^N)_n^{\downarrow}]= n!\left(\pFq{2}{1}{n+1;,n+1-N}{Nu+n+2}{-s}p_{n+1}^N+\pFq{2}{1}{n;,n-N}{Nu+n+1}{-s}p_{n}^N\right).
\end{equation*}
\item if $u_1=0$, then
$$E\left[(L_\infty^N)_{n}^{\downarrow}\right]=\frac{n!(N-1)_{n-1}^\downarrow}{\left(2+\frac{Nu}{1+s}\right)_{n-1}^{\uparrow}}\left(\frac{s}{1+s}\right)^{n-1}E[L_\infty^N].$$
\end{enumerate}
\end{proposition}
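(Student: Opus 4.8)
The plan is to derive the three--term recurrence from the mass--function recursion \eqref{prmm} and then to read off the two explicit formulas as corollaries. For the recurrence, first observe that \eqref{prmm} together with the second boundary condition in \eqref{pbcmm} (which is exactly its $n=N$ instance) states that $(k+Nu_1)p_k^N=(N-k+1)s\,p_{k-1}^N-Nu_0\sum_{\ell\ge k}p_\ell^N$ for every $k\in\{2,\dots,N\}$. I would multiply this by the falling factorial $(k-1)_n^{\downarrow}$ and sum over $k$; since $(k-1)_n^{\downarrow}=0$ for $k\in\{1,\dots,n\}$ and $p_k^N=0$ for $k>N$, no boundary term survives and the three resulting sums become expectations. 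On the left, $k\,(k-1)_n^{\downarrow}=(k)_{n+1}^{\downarrow}$ turns $\sum_k(k-1)_n^{\downarrow}(k+Nu_1)p_k^N$ into $E[(L_\infty^N)_{n+1}^{\downarrow}]+Nu_1\,E[(L_\infty^N-1)_n^{\downarrow}]$. In the first term on the right, the shift $k\mapsto k-1$ and the identity $j\,(j)_n^{\downarrow}=(j)_{n+1}^{\downarrow}+n\,(j)_n^{\downarrow}$ give $s\big((N-n)E[(L_\infty^N)_n^{\downarrow}]-E[(L_\infty^N)_{n+1}^{\downarrow}]\big)$. In the last term, exchanging the order of summation and using the hockey--stick identity $\sum_{j=0}^{m}(j)_n^{\downarrow}=(m+1)_{n+1}^{\downarrow}/(n+1)$ produces $\tfrac{Nu_0}{n+1}E[(L_\infty^N)_{n+1}^{\downarrow}]$. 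Collecting these and multiplying through by $n+1$ gives the asserted recurrence for $n\in[N-1]$.

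For the case $u_1=0$ we have $u=u_0$, and the recurrence collapses to the one--step relation $E[(L_\infty^N)_{n+1}^{\downarrow}]=\dfrac{(n+1)(N-n)s}{(n+1)(1+s)+Nu_0}\,E[(L_\infty^N)_n^{\downarrow}]$. Iterating from $E[(L_\infty^N)_1^{\downarrow}]=E[L_\infty^N]$ and reading off the telescoping products $\prod_{j=1}^{n-1}(j+1)=n!$, $\prod_{j=1}^{n-1}(N-j)=(N-1)_{n-1}^{\downarrow}$ and, after factoring $1+s$ out of each denominator, $\prod_{j=1}^{n-1}\big(j+1+\tfrac{Nu_0}{1+s}\big)=\big(2+\tfrac{Nu_0}{1+s}\big)_{n-1}^{\uparrow}$ with a leftover factor $(1+s)^{-(n-1)}$, one obtains the stated formula.

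For the case $u_0=0$ we have $u=u_1$, and by \eqref{pmfMMu0} the numbers $(p_{n+1}^N)_{n\ge0}$ are precisely the Taylor coefficients of $\tilde g(z):=\hf(1,1-N;Nu+2;-sz)\big/\hf(1,1-N;Nu+2;-s)$, so that $g_N(z)=z\,\tilde g(z)$ and hence $E[(L_\infty^N)_n^{\downarrow}]=g_N^{(n)}(1)=\tilde g^{(n)}(1)+n\,\tilde g^{(n-1)}(1)$. I would differentiate $\tilde g$ using $\frac{{\rm d}^j}{{\rm d}z^j}\hf(a,b;c;z)=\frac{(a)_j^{\uparrow}(b)_j^{\uparrow}}{(c)_j^{\uparrow}}\hf(a+j,b+j;c+j;z)$ together with the chain rule, simplify with $(1)_j^{\uparrow}=j!$ and $(-s)^j(1-N)_j^{\uparrow}=s^j(N-1)_j^{\downarrow}$, and recognise, again from \eqref{pmfMMu0}, that $s^j(N-1)_j^{\downarrow}\big/\big((Nu+2)_j^{\uparrow}\,\hf(1,1-N;Nu+2;-s)\big)=p_{j+1}^N$; this yields $\tilde g^{(j)}(1)=j!\,p_{j+1}^N\,\hf(j+1,j+1-N;Nu+j+2;-s)$ for every $j\ge0$. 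Substituting $j=n$ and $j=n-1$ into $E[(L_\infty^N)_n^{\downarrow}]=\tilde g^{(n)}(1)+n\,\tilde g^{(n-1)}(1)$ yields the claimed identity (the edge case $n=N$ is immediate, since $p_{N+1}^N=0$).

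None of the steps is deep; the only real difficulty is the bookkeeping with Pochhammer symbols. In the recurrence one must check that the three combinatorial reductions --- the index shift, the hockey--stick summation, and $j\,(j)_n^{\downarrow}=(j)_{n+1}^{\downarrow}+n\,(j)_n^{\downarrow}$ --- align the coefficients of $E[(L_\infty^N)_{n+1}^{\downarrow}]$, $E[(L_\infty^N)_n^{\downarrow}]$ and $E[(L_\infty^N-1)_n^{\downarrow}]$ correctly; and in the $u_0=0$ case one must track the signs and the rising/falling--factorial identities carefully so that the two summands of $g_N^{(n)}(1)$ emerge as exactly the stated multiples of $p_n^N$ and $p_{n+1}^N$.
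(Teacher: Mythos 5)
Your proposal is correct, and for the three--term recurrence it takes a genuinely different route from the paper. The paper derives the recurrence analytically: it differentiates the generating--function identity \eqref{pp} $n$ times with Leibniz's rule and lets $z\to1$, which requires the limit computations \eqref{l1} and, for the $u_0$--term, an $(n+1)$--fold l'H\^opital argument \eqref{l2}. You instead sum the stationary recursion \eqref{prmm} (whose $n=N$ instance is indeed the second boundary condition in \eqref{pbcmm}, as you note) against the weights $(k-1)_n^{\downarrow}$ and use only the elementary identities $k\,(k-1)_n^{\downarrow}=(k)_{n+1}^{\downarrow}$, $j\,(j)_n^{\downarrow}=(j)_{n+1}^{\downarrow}+n\,(j)_n^{\downarrow}$ and $\sum_{j=0}^{m}(j)_n^{\downarrow}=(m+1)_{n+1}^{\downarrow}/(n+1)$; I checked the bookkeeping (including the vanishing of the $k\le n$ and $j=N$ terms and the final multiplication by $n+1$) and the coefficients come out exactly as stated. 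This avoids the generating function altogether for the recurrence and is arguably more elementary, at the cost of not reusing the ODE machinery the paper has already set up. For the explicit formulas the two arguments essentially coincide: in case $u_0=0$ the paper writes $E[(L_\infty^N)_n^{\downarrow}]=p_1^Ns^{n-1}f^{(n)}(s)$ with $f(x)=x\,\hf(1,1-N;Nu+2;-x)$ and invokes the $n$--fold derivative formula for $\hf$ from Lebedev, while you differentiate $g_N(z)=z\,\tilde g(z)$ in $z$ at $z=1$ with the same contiguous--derivative formula --- the same power series differentiated in a different variable, yielding the same two--term expression (and your reading $g_N(z)=z\,\tilde g(z)$ is the one consistent with \eqref{pmfMMu0}; the display \eqref{pgfMMu0} as printed is missing this factor $z$); in case $u_1=0$ both proofs simply iterate the recurrence. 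Your handling of the edge case $n=N$ via $p_{N+1}^N=0$ (equivalently $(N-1)_N^{\downarrow}=0$) is also fine.
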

\begin{proof}
Differentiating Eq. \eqref{pp} $n$ times and using the general Leibniz rule we obtain
\begin{align}\label{dnp}
 \frac{(1+sz)\,g_N^{(n+1)}(z)}{N}+ \frac{ns\, g_N^{(n)}(z)}{N}&=sg_N^{(n)}(z)-u_1\sum\limits_{k=0}^n\binom{n}{k}\frac{g_N^{(n-k)}(z)(-1)^k k!}{z^{k+1}}\nonumber\\
 &-\frac{u_0 n!(-1)^n}{(z-1)^{n+1}}\left(\sum\limits_{k=0}^n\frac{g_N^{(k)}(z)(z-1)^k(-1)^{k}}{k!}-1\right).
\end{align}
Since $\lim_{z\rightarrow 1}g_N^{(n)}(z)=E[(L_\infty^N)_n^{\downarrow}]$, we have
\begin{equation}\label{l1}
 \lim\limits_{z\rightarrow 1}\sum\limits_{k=0}^n\binom{n}{k}\frac{g_N^{(n-k)}(z)(-1)^k k!}{z^{k+1}}=E\left[\sum\limits_{k=0}^n\binom{n}{k}(L_\infty^N)_{n-k}^{\downarrow}(-1)_k^{\downarrow}\right]=E\left[(L_\infty^N-1)_n^{\downarrow}\right].
\end{equation}
In addition, using l'H\^{o}pital's rule, we get for all $n\in\Nb$
\begin{align}\label{l2}
 \lim\limits_{z\rightarrow 1}\frac{1}{(z-1)^{n+1}}&\left(
 \sum\limits_{k=0}^n\frac{g_N^{(k)}(z)(z-1)^k(-1)^{k}}{k!}-1\right)\nonumber\\&=\lim\limits_{z\rightarrow 1}\frac{1}{(n+1)(z-1)^{n}}\sum\limits_{k=0}^n \left(\frac{g_N^{(k+1)}(z)(z-1)^k(-1)^{k}}{k!}-\frac{g_N^{(k)}(z) k(z-1)^{k-1}(-1)^{k-1}}{k!}\right)\nonumber\\
 &= \lim\limits_{z\rightarrow 1}\frac{g_N^{(n+1)}(z)(-1)^{n}}{n!(n+1)}=\frac{E\left[(L_\infty^N)_{n}^{\downarrow}\right](-1)^{n}}{(n+1)!}.
\end{align}
The first statement follows letting $z\to1$ in \eqref{dnp} and using \eqref{l1} and \eqref{l2}.

Now, we proceed to prove (1). First note that $E[(L_\infty^N)_n^{\downarrow}]=\sum_{j=n}^N (j)_n^{\downarrow}\, p_j^N$. Therefore, using \eqref{pmfMMu0} we get
$$E[(L_\infty^N)_n^{\downarrow}]=p_1^N\sum\limits_{j=n}^N\frac{(N-1)_{j-1}^{\downarrow}}{(Nu+2)_{j-1}^{\uparrow}}\,s^{j-1}(j)_n^{\downarrow}=p_1^N s^{n-1} f^{(n)}(s),$$
where $f(x)=\sum_{j=1}^N\frac{(N-1)_{j-1}^{\downarrow}}{(Nu+2)_{j-1}^{\uparrow}} x^j= \pFq{2}{1}{1;,1-N}{Nu+2}{-x}\, x$. The result follows from \cite[p.~241, Eq.~(9.2.3)]{leb}.

Assertion (2) follows directly iterating the first statement with $u_1=0$. 
\end{proof}

\section{\texorpdfstring{The master equation for the $\Lambda$-Wright--Fisher model}{The master equation for the Lambda-Wright--Fisher model}}\label{s5}
As in the previous section, we denote by $D:=\{z\in\Cb: |z|<1\}$ the open unit disk. In this section we aim to characterise the probability generating function $g_\Lambda:D\to\Cb$ defined via $g_\Lambda(z):=\sum_{k=1}^\infty p_k^\Lambda z^k$. Since $p_n^\Lambda=a_{n-1}^\Lambda-a_n^\Lambda$, Eq. \eqref{frlm} turns into
\begin{equation}\label{prlm}
\left(\frac{m_0(n+1)}{2}+\theta_1\right)p_{n+1}^\Lambda+\sum\limits_{k=n+1}^\infty p_k^\Lambda\left(c_{n,k}+\frac{m_1}{n}+\theta_0\right) =\sigma p_{n}^\Lambda, \quad n\in\Nb,
\end{equation}
where
$$c_{n,k}:=\frac{1}{n}\sum_{\ell=k+1}^\infty\binom{\ell-1}{\ell-n}\int_{(0,1)}\xi^{\ell-n-2}(1-\xi)^n\Lambda_0({\rm d}\xi),\quad k>n.$$
The recursion is completed with the condition $\sum_{k=1}^\infty p_k^\Lambda=1$.

For $z_1,z_2\in D$ and any analytic function $f:D\to\Cb$ we denote by $\int_{z_1}^{z_2} f(\xi){\rm d}\xi$ the integral of $f$ along any smooth path in $D$ connecting $z_1$ and $z_2$. 
\begin{proposition}[Master equation I]\label{MEI}
 For all $z\in D\setminus\{0\}$,
 \begin{align*}
  \frac{m_0}{2}g_\Lambda'(z)+m_1\int_0^z\frac{u-g_\Lambda(u)}{u(1-u)}{\rm d}u+\frac{\sigma z^2- (\sigma+\theta)z+\theta_1}{z(1-z)}\,g_\Lambda(z)&=\left(\frac{m_0}{2} +\theta_1 \right)p_1^\Lambda - \frac{\theta_0 z}{1-z}-\sum\limits_{k=2}^\infty p_k^\Lambda c_{k}(z),
  \end{align*}
where $c_k(z):=\sum_{n=1}^{k-1} c_{n,k}z^n$.
\end{proposition}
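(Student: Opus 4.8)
The plan is to derive the master equation directly from the recursion \eqref{prlm} by multiplying the $n$-th equation by $z^n$ and summing over $n\in\Nb$, treating each of the four groups of terms as a power series in $z$ and identifying the resulting series with the analytic expressions appearing in the statement. First I would handle the term $(\frac{m_0(n+1)}{2}+\theta_1)p_{n+1}^\Lambda$: multiplying by $z^n$ and summing gives $\sum_{n\geq 1}(\frac{m_0(n+1)}{2}+\theta_1)p_{n+1}^\Lambda z^n$, which after shifting the index equals $\frac{m_0}{2}\sum_{k\geq 2}k\,p_k^\Lambda z^{k-1}+\theta_1\sum_{k\geq 2}p_k^\Lambda z^{k-1}=\frac{m_0}{2}(g_\Lambda'(z)-p_1^\Lambda)+\theta_1\frac{g_\Lambda(z)-p_1^\Lambda z}{z}$. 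Next, the right-hand side $\sigma p_n^\Lambda$ contributes $\sigma g_\Lambda(z)$, and the $\theta_0$ contribution $\sum_{n\geq 1}z^n\sum_{k\geq n+1}p_k^\Lambda\theta_0$ is, after interchanging the order of summation, $\theta_0\sum_{k\geq 2}p_k^\Lambda\sum_{n=1}^{k-1}z^n=\theta_0\sum_{k\geq 2}p_k^\Lambda\frac{z-z^k}{1-z}=\frac{\theta_0}{1-z}(z(1-p_1^\Lambda)-(g_\Lambda(z)-p_1^\Lambda z))=\frac{\theta_0}{1-z}(z-g_\Lambda(z))$. The $\frac{m_1}{n}$ term gives $m_1\sum_{n\geq 1}\frac{z^n}{n}\sum_{k\geq n+1}p_k^\Lambda$; writing $\frac{z^n}{n}=\int_0^z u^{n-1}{\rm d}u$ and interchanging sum and integral yields $m_1\int_0^z\frac{1}{u}\sum_{n\geq 1}u^n\sum_{k\geq n+1}p_k^\Lambda{\rm d}u=m_1\int_0^z\frac{1}{u}\cdot\frac{u-g_\Lambda(u)}{1-u}{\rm d}u$, using the same double-sum identity as for the $\theta_0$ term. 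Finally the $c_{n,k}$ term is exactly $\sum_{k\geq 2}p_k^\Lambda c_k(z)$ by definition of $c_k(z)$.

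Collecting the four groups, one obtains
\begin{align*}
 \frac{m_0}{2}(g_\Lambda'(z)-p_1^\Lambda)+\theta_1\frac{g_\Lambda(z)-p_1^\Lambda z}{z}+\sum_{k\geq2}p_k^\Lambda c_k(z)+m_1\int_0^z\frac{u-g_\Lambda(u)}{u(1-u)}{\rm d}u+\frac{\theta_0(z-g_\Lambda(z))}{1-z}&=\sigma g_\Lambda(z).
\end{align*}
Rearranging, moving the $g_\Lambda$-coefficients to the left and the constants to the right, and combining $\theta_1\frac{g_\Lambda(z)}{z}-\frac{\theta_0 g_\Lambda(z)}{1-z}-\sigma g_\Lambda(z)$ over the common denominator $z(1-z)$ gives precisely the coefficient $\frac{\sigma z^2-(\sigma+\theta)z+\theta_1}{z(1-z)}g_\Lambda(z)$ (since $-\theta_1\frac{1-z}{z(1-z)}\cdot(-1)$ — more carefully, $\theta_1/z=\theta_1(1-z)/(z(1-z))$ and $-\theta_0/(1-z)=-\theta_0 z/(z(1-z))$ and $-\sigma=-\sigma z(1-z)/(z(1-z))$, whose numerators sum to $\theta_1(1-z)-\theta_0 z-\sigma z(1-z)=-\sigma z^2+(\sigma+\theta_0+\theta_1)z\cdot(-1)+\theta_1$; one checks the sign bookkeeping so that the left-hand coefficient is $+\frac{\sigma z^2-(\sigma+\theta)z+\theta_1}{z(1-z)}$ after everything is moved to one side). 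The constant terms $-\frac{m_0}{2}p_1^\Lambda$ and $-\theta_1 p_1^\Lambda$ combine with the sign changes to produce $(\frac{m_0}{2}+\theta_1)p_1^\Lambda$ on the right, the $\theta_0$-term yields $-\frac{\theta_0 z}{1-z}$, and $\sum_{k\geq2}p_k^\Lambda c_k(z)$ moves to the right with a minus sign, matching the claimed identity.

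The main obstacle is justifying the interchanges of summation (and of sum and integral) used above, i.e. showing absolute convergence of the double series $\sum_{n\geq 1}\sum_{k\geq n+1}|p_k^\Lambda c_{n,k}|\,|z|^n$ and similar ones for $|z|<1$. For the $m_1$, $\theta_0$ terms this is immediate since $\sum_k p_k^\Lambda=1$ and $\sum_n|z|^n<\infty$. For the $c_{n,k}$ term one must control $c_{n,k}$: from its definition $c_{n,k}=\frac1n\sum_{\ell\geq k+1}\binom{\ell-1}{\ell-n}\int_{(0,1)}\xi^{\ell-n-2}(1-\xi)^n\Lambda_0({\rm d}\xi)$, and one should bound this by comparing with the rates $\lambda_{\ell,n}$ appearing in \eqref{frlm} — indeed $c_{n,k}$ is essentially a tail of $\frac1n\sum_\ell\binom{\ell-1}{n}\lambda_{\ell,n}$-type quantities, which are finite because $\Lambda_0$ is a finite measure with $\int\xi^{-2}\Lambda_0({\rm d}\xi)$ controlled on compacta and the binomial sums telescope to expressions like $(1-\xi)^{-1}$-type factors. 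The cleanest route is probably to note that \eqref{prlm} was itself derived from the convergent system \eqref{frlm} (with $\sum_k\binom{k+n-1}{k}\lambda_{k+n,k}a_k^\Lambda<\infty$ guaranteed by \cite[Thm.~2.4]{BLW16}), so that $\sum_{k\geq n+1}p_k^\Lambda(c_{n,k}+\frac{m_1}{n}+\theta_0)$ converges for each $n$; summing the resulting bound against $|z|^n$ and using $|z|<1$ then gives the needed absolute convergence, after which Fubini/Tonelli and the dominated convergence theorem legitimise every rearrangement. The term-by-term differentiation $\sum k p_k^\Lambda z^{k-1}=g_\Lambda'(z)$ is valid on $D$ since $g_\Lambda$ is a power series with radius of convergence at least $1$.
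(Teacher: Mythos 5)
Your proposal is correct and follows essentially the same route as the paper's proof: multiply the recursion \eqref{prlm} by $z^n$, sum over $n$, use Fubini to identify each group of terms with $\frac{m_0}{2}(g_\Lambda'(z)-p_1^\Lambda)+\theta_1\frac{g_\Lambda(z)-zp_1^\Lambda}{z}$, $\frac{\theta_0(z-g_\Lambda(z))}{1-z}$, $m_1\int_0^z\frac{u-g_\Lambda(u)}{u(1-u)}{\rm d}u$ and $\sum_{k\geq 2}p_k^\Lambda c_k(z)$, then rearrange. Your extra remarks on justifying the interchanges of summation only elaborate on what the paper dispatches with a reference to Fubini's theorem, so there is no substantive difference.
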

\begin{proof}
 Let $z\in D\setminus\{0\}$.  Multiplying \eqref{prlm} with $z^n$ and summing over all $n\in\Nb$ leads to
\begin{equation}\label{gf1}
\sum\limits_{n=1}^\infty \left(\frac{m_0(n+1)}{2}+\theta_1\right) p_{n+1}^\Lambda z^n+\sum\limits_{n=1}^\infty z^n\sum\limits_{k=n+1}^\infty p_k^\Lambda\left(c_{n,k}+\frac{m_1}{n}+\theta_0\right) =\sigma g_\Lambda(z).
\end{equation}
Note that $\sum_{n=1}^\infty n p_n^\Lambda z^{n-1}= g_\Lambda'(z)$. Therefore,  
\begin{equation}\label{lg1}
\sum\limits_{n=1}^\infty \left(\frac{m_0(n+1)}{2}+\theta_1\right) p_{n+1}^\Lambda z^n=\frac{m_0}2 \left(g_\Lambda'(z)-p_1^\Lambda\right)+\theta_1 \,\frac{(g_\Lambda(z)-z p_1^\Lambda)}{z}.
\end{equation}
In addition, using Fubini's theorem, we get 
\begin{equation}\label{r1}
\sum\limits_{n=1}^\infty z^n\sum\limits_{k=n+1}^\infty p_k^\Lambda\left(c_{n,k}+\theta_0\right)=\sum\limits_{k=2}^\infty p_k^\Lambda\left( c_k(z)+\frac{z-z^k}{1-z}\right)=\sum\limits_{k=2}^\infty p_k^\Lambda c_k(z)+\frac{z-g_\Lambda(z)}{1-z}.
\end{equation}
Similarly, we have
\begin{equation}\label{r2}
\sum\limits_{n=1}^\infty \frac{z^n}{n}\sum\limits_{k=n+1}^\infty p_k^\Lambda=\int_0^z \left( \sum\limits_{n=1}^\infty u^{n-1}\sum\limits_{k=n+1}^\infty p_k^\Lambda\right){\rm d}u=\int_0^z \frac{u-g_\Lambda(u)}{u(1-u)}{\rm d}u.
\end{equation}
Plugging \eqref{lg1}, \eqref{r1}, \eqref{r2} in \eqref{gf1} yields the result.
\end{proof}

 \begin{proposition}[Master equation II]\label{MEII}
 For all $z\in D\setminus\{0\}$,
  \begin{align*}
 & \frac{m_0}{2}g_\Lambda'(z)+m_1\int_0^z\frac{u-g_\Lambda(u)}{u(1-u)}{\rm d}u+\frac{\sigma z^2- (\sigma+\theta)z+\theta_1}{z(1-z)}\,g_\Lambda(z)=\left(\frac{m_0}{2} +\theta_1 \right)p_1^\Lambda - \frac{\theta_0 z}{1-z}\\
 &\qquad\qquad\qquad-\int_{(0,1)}\frac{\Lambda_0({\rm d}\xi)}{\xi^2}\left[\int_{z(1-\xi)}^z  \frac{u-g_\Lambda(u)}{u(1-u)}{\rm d}u-\int_{z(1-\xi)}^{\xi+z(1-\xi)}  \frac{1-g_\Lambda(u)}{1-u}{\rm d}u+\int_{0}^{\xi}\frac{1-g_\Lambda(u)}{1-u}{\rm d}u  \right].
  \end{align*}
  \end{proposition}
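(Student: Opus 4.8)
The plan is to obtain the stated equation by transforming, in Master Equation I (Proposition \ref{MEI}), only the term $\sum_{k=2}^\infty p_k^\Lambda c_k(z)$; every other term already occurs in the desired form. Thus it suffices to prove, for every $z\in D\setminus\{0\}$, the identity
\begin{equation*}
\sum_{k=2}^\infty p_k^\Lambda c_k(z)=\int_{(0,1)}\frac{\Lambda_0({\rm d}\xi)}{\xi^2}\left[\int_{w}^z\frac{u-g_\Lambda(u)}{u(1-u)}{\rm d}u-\int_{w}^{\xi+w}\frac{1-g_\Lambda(u)}{1-u}{\rm d}u+\int_0^\xi\frac{1-g_\Lambda(u)}{1-u}{\rm d}u\right],\qquad w:=z(1-\xi).
\end{equation*}
As preparation I would record, for $u\in D$, the two elementary power-series identities
\begin{equation*}
\frac{u-g_\Lambda(u)}{u(1-u)}=\sum_{n=1}^\infty a_n^\Lambda\,u^{n-1},\qquad \frac{1-g_\Lambda(u)}{1-u}=\sum_{n=0}^\infty a_n^\Lambda\,u^{n},
\end{equation*}
with $a_n^\Lambda=P(L_\infty^\Lambda>n)$ and $a_0^\Lambda=1$, both of which follow from $p_n^\Lambda=a_{n-1}^\Lambda-a_n^\Lambda$ and $\sum_n p_n^\Lambda=1$ (the first was already used, in integrated form, in the proof of Proposition \ref{MEI}). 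Since the numerators have a simple zero at $u=0$, both integrands extend analytically to $D$; hence the path integrals above are path-independent, and termwise integration gives, for $z_1,z_2\in D$, $\int_{z_1}^{z_2}\tfrac{u-g_\Lambda(u)}{u(1-u)}{\rm d}u=\sum_{n\ge1}\tfrac{z_2^n-z_1^n}{n}a_n^\Lambda$ and $\int_{z_1}^{z_2}\tfrac{1-g_\Lambda(u)}{1-u}{\rm d}u=\sum_{n\ge0}\tfrac{z_2^{n+1}-z_1^{n+1}}{n+1}a_n^\Lambda$.

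First I would unfold the left-hand side: from $c_k(z)=\sum_{n=1}^{k-1}c_{n,k}z^n$ and a change in the order of summation, $\sum_{k\ge2}p_k^\Lambda c_k(z)=\sum_{n\ge1}z^n\sum_{k>n}p_k^\Lambda c_{n,k}$. Inserting the definition of $c_{n,k}$, interchanging the sums over $k$ and $\ell$ using $\sum_{k=n+1}^{\ell-1}p_k^\Lambda=a_n^\Lambda-a_{\ell-1}^\Lambda$, setting $m=\ell-n$, and factoring $\xi^{-2}$ out of $\xi^{m-2}$, this becomes
\begin{equation*}
\sum_{k\ge2}p_k^\Lambda c_k(z)=\int_{(0,1)}\frac{\Lambda_0({\rm d}\xi)}{\xi^2}\sum_{n=1}^\infty\frac{w^n}{n}\sum_{m=2}^\infty\binom{m+n-1}{m}\xi^m\big(a_n^\Lambda-a_{m+n-1}^\Lambda\big),\qquad w=z(1-\xi).
\end{equation*}
For fixed $\xi$ I would then evaluate the inner double sum. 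The part carrying $a_n^\Lambda$ is summed using $\sum_{m\ge0}\binom{m+n-1}{m}\xi^m=(1-\xi)^{-n}$ together with $w^n(1-\xi)^{-n}=z^n$, giving $\sum_{n\ge1}\tfrac{z^n-w^n}{n}a_n^\Lambda-\xi\sum_{n\ge1}w^n a_n^\Lambda$. In the part carrying $a_{m+n-1}^\Lambda$ I would reindex by $j=m+n-1$ (so $1\le n\le j-1$), use $\binom{m+n-1}{m}=\binom{j}{n-1}$ and the elementary identity $\tfrac1n\binom{j}{n-1}=\tfrac1{j+1}\binom{j+1}{n}$, and observe that $-\xi\sum_n w^n a_n^\Lambda$ supplies exactly the $n=j$ contribution completing the resulting inner sums; the binomial theorem $\sum_{n=1}^{j}\binom{j+1}{n}w^n\xi^{j+1-n}=(w+\xi)^{j+1}-w^{j+1}-\xi^{j+1}$ then turns the whole double sum into
\begin{equation*}
\sum_{n=1}^\infty\frac{z^n-w^n}{n}a_n^\Lambda+\sum_{j=0}^\infty\frac{a_j^\Lambda}{j+1}\big(\xi^{j+1}+w^{j+1}-(w+\xi)^{j+1}\big)
\end{equation*}
(the $j=0$ summand being $0$). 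By the integrated-series formulas above, the first sum equals $\int_w^z\tfrac{u-g_\Lambda(u)}{u(1-u)}{\rm d}u$ and the second equals $\int_0^\xi\tfrac{1-g_\Lambda(u)}{1-u}{\rm d}u+\int_0^w\tfrac{1-g_\Lambda(u)}{1-u}{\rm d}u-\int_0^{w+\xi}\tfrac{1-g_\Lambda(u)}{1-u}{\rm d}u$; merging $\int_0^w-\int_0^{w+\xi}=-\int_w^{w+\xi}$ and recalling $w+\xi=\xi+z(1-\xi)$ gives exactly the identity displayed above, and substituting it into Master Equation I finishes the proof.

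Apart from this bookkeeping, the step I expect to require genuine care is the justification of the interchanges of $\sum_n$, $\sum_k$, $\sum_\ell$ and $\int_{(0,1)}$, because $\int_{(0,1)}\xi^{-2}\Lambda_0({\rm d}\xi)$ may be infinite. The decisive point is that $m\ge2$ tames the $\xi^{-2}$: from $1-(1-\xi)^n-n\xi(1-\xi)^n=\xi\sum_{i=0}^{n-1}(1-\xi)^i\big(1-(1-\xi)^{n-i}\big)$ and $1-(1-\xi)^{n-i}\le(n-i)\xi$ one obtains
\begin{equation*}
(1-\xi)^n\sum_{m=2}^\infty\binom{m+n-1}{m}\xi^{m-2}=\xi^{-2}\big(1-(1-\xi)^n-n\xi(1-\xi)^n\big)\le\binom{n+1}{2},
\end{equation*}
uniformly in $\xi\in(0,1)$. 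Since $\Lambda_0$ is finite and $|z|<1$, the series $\sum_{n}\tfrac{|z|^n}{n}\binom{n+1}{2}\Lambda_0((0,1))$ converges and dominates the quantities occurring in the two displayed reductions and in the termwise evaluation of the inner sums, so Fubini's theorem legitimizes all these rearrangements; finally, the points $w=z(1-\xi)$ and $\xi+w$ at which the power series of the first paragraph are evaluated indeed lie in $D$, since $|w|=|z|(1-\xi)<1$ and $|\xi+w|\le\xi+|z|(1-\xi)<1$.
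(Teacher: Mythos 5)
Your proof is correct, and it establishes exactly the same key identity as the paper, namely the representation of $\sum_{k\ge 2}p_k^\Lambda c_k(z)$ as $\int_{(0,1)}\xi^{-2}\Lambda_0({\rm d}\xi)$ applied to the three-integral bracket, which is then substituted into Proposition \ref{MEI}. The execution, however, is organized differently. The paper works at the level of the probability generating function: it writes $c_k(z)=\int_{(0,1)}\xi^{-2}\Lambda_0({\rm d}\xi)\int_0^z C_k(u,\xi)\,{\rm d}u$, computes the kernel $C_k(u,\xi)$ in closed form via binomial tail sums, sums against $p_k^\Lambda$ to express the integrand through $g_\Lambda(u)$, $g_\Lambda(u(1-\xi))$ and $g_\Lambda(\xi+u(1-\xi))$, and only then integrates in $u$ with changes of variables to obtain the bracket. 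You instead sum over $k$ first, which brings in the tail probabilities via $\sum_{k=n+1}^{\ell-1}p_k^\Lambda=a_n^\Lambda-a_{\ell-1}^\Lambda$, manipulate the resulting double series in $(n,m)$ with the identity $\tfrac1n\binom{j}{n-1}=\tfrac1{j+1}\binom{j+1}{n}$ and the binomial theorem, and recognize the three integrals through termwise integration of the expansions $\tfrac{u-g_\Lambda(u)}{u(1-u)}=\sum_{n\ge1}a_n^\Lambda u^{n-1}$ and $\tfrac{1-g_\Lambda(u)}{1-u}=\sum_{n\ge0}a_n^\Lambda u^n$; this keeps the argument entirely at the level of series in the $a_n^\Lambda$, which is natural given that \eqref{frlm} is stated in terms of them, and avoids the closed-form kernel and the $u$-substitutions. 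A further plus of your write-up is the explicit domination argument, $\xi^{-2}\bigl(1-(1-\xi)^n-n\xi(1-\xi)^n\bigr)\le\binom{n+1}{2}$ uniformly in $\xi$, which justifies the interchanges of summation and integration even when $\int\xi^{-2}\Lambda_0({\rm d}\xi)=\infty$ -- a point the paper's proof passes over with a bare appeal to Fubini.
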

\begin{proof}
From Proposition \ref{MEI} it suffices to show that
$$\sum\limits_{k=2}^\infty p_k^\Lambda c_{k}(z)=\int_{(0,1)}\frac{\Lambda_0({\rm d}\xi)}{\xi^2}\left[\int_{z(1-\xi)}^z  \frac{u-g_\Lambda(u)}{u(1-u)}{\rm d}u-\int_{z(1-\xi)}^{\xi+z(1-\xi)}  \frac{1-g_\Lambda(u)}{1-u}{\rm d}u+\int_{0}^{\xi}\frac{1-g_\Lambda(u)}{1-u}{\rm d}u  \right].$$
Using Fubini's theorem, we deduce that
\begin{equation}\label{scnk}
\sum\limits_{k=2}^\infty p_k^\Lambda c_{k}(z)=\int_{(0,1)}\frac{\Lambda_0({\rm d}\xi)}{\xi^2}\left(\int_0^z\sum_{k=2}^\infty p_k^\Lambda\, C_k(u,\xi)\,{\rm d}u\right).
\end{equation}
where for $u\in D\setminus\{0\}$ and $\xi\in(0,1)$
$$C_k(u,\xi):=\sum_{n=0}^{k-2}u^{n}\sum_{\ell=k}^\infty\binom{\ell}{n}\xi^{\ell-n}(1-\xi)^{n+1}=(1-\xi)\sum_{n=0}^{k-2}\left(\frac{u(1-\xi)}{\xi}\right)^n\sum_{\ell=k}^\infty\binom{\ell}{n}\xi^{\ell}.$$
Since
\begin{align*}
 \sum_{\ell=k}^\infty\binom{\ell}{n}\xi^{\ell}&=\sum_{\ell=n}^\infty\binom{\ell}{n}\xi^{\ell}-\sum_{\ell=n}^{k-1}\binom{\ell}{n}\xi^{\ell}=\frac{\xi^n}{(1-\xi)^{n+1}}-\binom{k-1}{n}\xi^{k-1}-\sum_{\ell=n}^{k-2}\binom{\ell}{n}\xi^{\ell},
\end{align*}
we deduce that
\begin{align*}
C_k(u,\xi)&=\sum_{n=0}^{k-2}u^n-(1-\xi)\left[\xi^{k-1}\sum_{n=0}^{k-2}\binom{k-1}{n}\left(\frac{u(1-\xi)}{\xi}\right)^n+\sum_{n=0}^{k-2}\left(\frac{u(1-\xi)}{\xi}\right)^n\sum_{\ell=n}^{k-2}\binom{\ell}{n}\xi^{\ell}\right]\\
 &=\frac{1-u^{k-1}}{1-u}-(1-\xi)\left[\left(\xi+u(1-\xi)\right)^{k-1}-\left(u(1-\xi)\right)^{k-1}+\sum_{\ell=0}^{k-2}\xi^\ell\sum_{n=0}^{\ell}\binom{\ell}{n}\left(\frac{u(1-\xi)}{\xi}\right)^n\right]\\
 &=\frac{1-u^{k-1}}{1-u}-(1-\xi)\left[\left(\xi+u(1-\xi)\right)^{k-1}-\left(u(1-\xi)\right)^{k-1}+\sum_{\ell=0}^{k-2}\left(\xi+u(1-\xi)\right)^\ell\right]\\
  &=\frac{1-u^{k-1}}{1-u}-(1-\xi)\left[\frac{1-\left(\xi+u(1-\xi)\right)^{k}}{1-\xi-u(1-\xi)}-\left(u(1-\xi)\right)^{k-1}\right].
\end{align*}
As a consequence, we obtain
\begin{align*}
 \sum_{k=2}^\infty p_k^\Lambda\, C_k(u,\xi)=&\frac{u-g_\Lambda(u)}{u(1-u)}-(1-\xi)\left[\frac{1-g_\Lambda(\xi+u(1-\xi))}{1-\xi-u(1-\xi)}-\frac{g_\Lambda(u(1-\xi))}{u(1-\xi)}\right].
\end{align*}
Integrating over $u\in(0,z)$ and making appropriate change of variables, we get
\begin{align*}
\int_0^z  \sum_{k=2}^\infty p_k^\Lambda\, C_k(u,\xi){\rm d}u
&=\int_0^z  \frac{u-g_\Lambda(u)}{u(1-u)}{\rm d}u-\int_{\xi}^{\xi+z(1-\xi)}  \frac{1-g_\Lambda(v)}{1-v}{\rm d}v+\int_{0}^{z(1-\xi)}\frac{g_\Lambda(v)}{v}{\rm d}v\\
&=\int_{z(1-\xi)}^z  \frac{u-g_\Lambda(u)}{u(1-u)}{\rm d}u-\int_{\xi}^{\xi+z(1-\xi)}  \frac{1-g_\Lambda(v)}{1-v}{\rm d}v+\int_{0}^{z(1-\xi)}\frac{1-g_\Lambda(v)}{1-v}{\rm d}v\\
&=\int_{z(1-\xi)}^z  \frac{u-g_\Lambda(u)}{u(1-u)}{\rm d}u-\int_{z(1-\xi)}^{\xi+z(1-\xi)}  \frac{1-g_\Lambda(v)}{1-v}{\rm d}v+\int_{0}^{\xi}\frac{1-g_\Lambda(v)}{1-v}{\rm d}v.
\end{align*}
The result follows.
\end{proof}
As a first application of the results obtained in this section we rediscover the geometric law arising in the Crow--Kimura model.
\begin{corollary}[The Crow--Kimura model]
If $\Lambda\equiv 0$, and $\theta_0>0$ or $\theta_1>\sigma$, then for all $z\in D$
 \begin{equation*}
  g_\Lambda(z)=\frac{(1-p)z}{1-pz},
 \end{equation*}
where $p$ is given in \eqref{pargeo}. In particular, $L_\infty^{\Lambda}\sim\textrm{Geom}(1-p)$.
\end{corollary}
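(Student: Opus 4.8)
The plan is to specialise Master equation~I (Proposition~\ref{MEI}) to the degenerate measure $\Lambda\equiv 0$ and then solve the resulting relation explicitly. Since $\Lambda\equiv 0$ forces $m_0=m_1=0$ and $\Lambda_0=0$, every coefficient $c_{n,k}$ vanishes, hence $c_k(z)\equiv 0$, and Proposition~\ref{MEI} collapses to
\[
\frac{\sigma z^2-(\sigma+\theta)z+\theta_1}{z(1-z)}\,g_\Lambda(z)=\theta_1\,p_1^\Lambda-\frac{\theta_0 z}{1-z},\qquad z\in D\setminus\{0\}.
\]
Multiplying by $z(1-z)$ and using that both sides extend analytically to $z=0$, I would upgrade this to the polynomial identity $(\sigma z^2-(\sigma+\theta)z+\theta_1)\,g_\Lambda(z)=\theta_1\,p_1^\Lambda\,z(1-z)-\theta_0 z^2$ on all of $D$. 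Thus $g_\Lambda$ is a rational function of $z$ depending only on the single unknown $p_1^\Lambda$, and the remaining work is to pin this constant down and to recognise the resulting function.

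The elementary fact I would isolate first is that the number $p$ in \eqref{pargeo} solves $\theta_1 p^2-(\sigma+\theta)p+\sigma=0$ and lies in $(0,1)$; this is checked separately in the two branches of \eqref{pargeo}, using $\theta_0>0$ when $\theta_1=0$ and $\theta_1>\sigma$ when $\theta_0=0$ to place $p$ strictly inside the unit interval (in the case $\theta_0,\theta_1>0$ the left-hand quadratic takes the value $-\theta_0<0$ at $w=1$, so $p$, the smaller root, is $<1$). Substituting the ansatz $g_\Lambda(z)=(1-p)z/(1-pz)$ into the polynomial identity and cancelling the factor $z$, the three coefficient equations (constant, linear and quadratic in $z$) all reduce to exactly $\theta_1 p^2-(\sigma+\theta)p+\sigma=0$, so the identity holds and simultaneously forces $p_1^\Lambda=1-p$. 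Because \eqref{frlm} --- equivalently \eqref{prlm}, equivalently Master equation~I together with $g_\Lambda(0)=0$ and $\sum_n p_n^\Lambda=1$ --- has a unique solution by \cite[Thm.~2.4]{BLW16}, this rational function must be $g_\Lambda$; expanding $(1-p)z/(1-pz)=\sum_{n\geq1}(1-p)p^{n-1}z^n$ then gives $p_n^\Lambda=(1-p)p^{n-1}$, i.e. $L_\infty^\Lambda\sim\mathrm{Geom}(1-p)$.

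I do not expect a genuine obstacle here: the argument is a specialisation followed by a verification. The only delicate points are the analytic continuation across $z=0$ when clearing denominators, the case split in \eqref{pargeo} needed to guarantee $p\in(0,1)$ (so that the geometric series converges on $D$), and quoting the correct uniqueness statement. In fact the cleanest route avoids the master equation altogether: with $\Lambda\equiv 0$ the recursion \eqref{frlm} reduces to $(\sigma+\theta)\,a_n^\Lambda=\sigma\,a_{n-1}^\Lambda+\theta_1\,a_{n+1}^\Lambda$ for $n\in\Nb$, which the candidate $a_n^\Lambda=p^n$ satisfies precisely because $\theta_1 p^2-(\sigma+\theta)p+\sigma=0$, while the boundary conditions $a_0^\Lambda=1$ and $a_n^\Lambda\to0$ are immediate from $p\in(0,1)$; uniqueness of the solution then yields $a_n^\Lambda=p^n$, hence $p_n^\Lambda=a_{n-1}^\Lambda-a_n^\Lambda=(1-p)p^{n-1}$ and $g_\Lambda(z)=\sum_{n\geq1}(1-p)p^{n-1}z^n=(1-p)z/(1-pz)$ on $D$.
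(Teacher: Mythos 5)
Your argument is correct, but it settles the key step differently from the paper. Both you and the paper reduce the master equation (Proposition~\ref{MEI}, equivalently \ref{MEII}, since for $\Lambda\equiv 0$ the extra integral term vanishes) to the identity $(\sigma z^2-(\sigma+\theta)z+\theta_1)g_\Lambda(z)=\theta_1 p_1^\Lambda z(1-z)-\theta_0 z^2$; the divergence is in how the unknown constant $p_1^\Lambda$ is eliminated. The paper determines it by a complex-analytic argument: for $\theta_1>0$ the denominator $\sigma z^2-(\sigma+\theta)z+\theta_1$ has exactly one root $z_0\in D$, and analyticity of $g_\Lambda$ forces the numerator to vanish there, giving $p_1^\Lambda=\theta_0 z_0/(\theta_1(1-z_0))$, after which the geometric form drops out by simplification (the case $\theta_1=0$ is immediate because $p_1^\Lambda$ disappears). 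You instead verify the geometric ansatz and appeal to uniqueness; your first route (substituting $(1-p)z/(1-pz)$ into the polynomial identity) leans on an asserted equivalence of the master equation plus normalisation with the recursion, which the paper only proves in one direction, but your closing ``cleanest route'' repairs this completely: with $\Lambda\equiv 0$, \eqref{frlm} is the two-term recursion $(\sigma+\theta)a_n^\Lambda=\sigma a_{n-1}^\Lambda+\theta_1 a_{n+1}^\Lambda$, the candidate $a_n^\Lambda=p^n$ satisfies it exactly because $\theta_1p^2-(\sigma+\theta)p+\sigma=0$, the boundary conditions hold since $p\in(0,1)$ (your case split on \eqref{pargeo} is the right check here), and the uniqueness statement quoted in Section~\ref{s2} from \cite[Thm.~2.4]{BLW16} finishes the proof. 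This verification-plus-uniqueness route is more elementary (no root location in $D$, no analytic continuation), at the price of having to guess the answer, whereas the paper's method derives it; the trade-off is genuine but both are sound. One small omission: before invoking \eqref{frlm} and its uniqueness, or even speaking of $g_\Lambda$ and $L_\infty^\Lambda$, you should record, as the paper does, that $\sigma_\Lambda=0$ and Lemma~\ref{posrec} gives positive recurrence precisely under the hypotheses $\theta_0>0$ or $\theta_1>\sigma$.
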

\begin{proof}
 In this case $\sigma_\Lambda=0$ and Lemma \ref{posrec} implies that, if $\theta_0>0$ or $\theta_1>\sigma$, the process $L^\Lambda$ is positive recurrent. Moreover, Proposition \ref{MEII} yields
  \begin{equation*}
  g_\Lambda(z)=\frac{z[\theta_1p_1^\Lambda (1-z)-\theta_0 z]}{\sigma z^2- (\sigma+\theta)z +\theta_1},\quad z\in D\setminus\{0\}.
\end{equation*}
The result for $\theta_1=0$ follows directly. For $\theta_1>0$, the map $z\mapsto \sigma z^2- (\sigma+\theta)z +\theta_1$ has exactly one root in $D$, which is given by $z_0:=(\sigma+\theta-\sqrt{(\sigma+\theta)^2-4\sigma \theta_1})/(2\sigma)$. Since $g_\Lambda$ is analytic in $D$, we conclude that $p_1^\Lambda= \theta_0 z_0/(\theta_1(1-z_0))$. Plugging this expression in the formula for $g_\Lambda$ yields the result.
\end{proof}
\begin{remark}
Note that the function $h_\Lambda$ encoding the ancestral type distribution in the $\Lambda$-Wright--Fisher model (see Remark \ref{catd}) is given by $h_\Lambda(x)=1-g_\Lambda(1-x)$, $x\in[0,1].$
\end{remark}
\section{Solving the Fearnhead recursion for the Wright--Fisher diffusion model}\label{s6}
In this section we assume that the measure $\Lambda$ is concentrated in $0$ with total mass $m_0:=\Lambda(\{0\})$, i.e. blocks merge according to the Kingman coalescent. In particular, $\sigma_\Lambda=\infty$, and therefore, the block counting process is positive recurrent for any $\sigma>0$. Note that \eqref{prlm} reads
\begin{equation}\label{prk}
\left(\frac{m_0(n+1)}{2}+\theta_1\right)p_{n+1}^\Lambda=\sigma p_{n}-\theta_0 \sum\limits_{k=n+1}^\infty p_k^\Lambda,\quad n\in\Nb.
\end{equation}
The boundary condition $a_0^\Lambda=1$ yields $\sum_{n=1}^\infty p_n^\Lambda=1$. The following is the main result of this section.
\begin{theorem}\label{mainWF}
For the Wright--Fisher diffusion model with selection parameter $\sigma>0$ and mutation parameters $\theta_0,\theta_1\geq 0$ the following holds
\begin{itemize}
 \item[(i)] If $\theta_0=0$, then 
 \begin{equation}\label{pmfWFu0}
p_n^\Lambda=\frac{1}{\pFq{1}{1}{1}{2+\frac{2\theta}{m_0}}{\frac{2\sigma}{m_0}}}\frac{\left(\frac{2\sigma}{m_0}\right)^{n-1}}{\left(2+\frac{2\theta}{m_0}\right)_{n-1}^{\uparrow}}, \quad n\in\Nb,
\end{equation}
where $\chf$ is the confluent hypergeometric function (see Appendix \ref{Ab}). In particular, we have
 \begin{equation}\label{pgfWFu0}
  g_\Lambda(z)=\frac{\pFq{1}{1}{1}{2+\frac{2\theta}{m_0}}{\frac{2\sigma z}{m_0}}}{\pFq{1}{1}{1}{2+\frac{2\theta}{m_0}}{\frac{2\sigma}{m_0}}},\quad z\in\Cb.
 \end{equation}
\item[(ii)] If $\theta_0>0$, then for all $z\in D$
\begin{equation}\label{pgfWF}
 g_\Lambda(z)=\frac{2\theta_0 \,I_0}{m_0(I_0-I_1)}e^{\frac{2\sigma }{m_0}\,z }z^{-\frac{2\theta_1}{m_0}}\,(1-z)^{-\frac{2\theta_0}{m_0}}\int_{0}^z\left(\frac{I_1}{I_0}-\xi\right)\,\xi^{\frac{2\theta_1}{m_0}}\,(1-\xi)^{\frac{2\theta_0}{m_0}-1} e^{-\frac{2\sigma}{m_0}\,\xi}\,{\rm d}\xi,
\end{equation}
where $I_i=\int_{0}^1 y^{\frac{2\theta_1}{m_0}+i}\,(1-y)^{\frac{2\theta_0}{m_0}-1}e^{-\frac{2\sigma}{m_0}\, y}\,{\rm d}y$, $i\in\{0,1\}$. Moreover,
\begin{equation}\label{pmfWF}
p_n^\Lambda=\frac{2\theta_0 }{(I_0-I_1)}\left[\frac{I_1}{2\theta_1+m_0}\,q_{n,1}-\frac{I_0}{2\theta_1+m_0}\,q_{n,2}\right],\quad n\in\Nb,
\end{equation}
 where $q_{1,1}:=1$, $q_{1,2}:=0$, and for $n\geq2$
\begin{align*}
q_{n,i}:=\sum\limits_{m=0}^{n-i}\frac{\left(\frac{2\sigma}{m_0}\right)^m}{\left(\frac{2\theta_1}{m_0}+i+1\right)_m^\uparrow}  \pFq{3}{2}{m+1;,1-\frac{2\theta_0}{m_0};,m-n+i}{\frac{2\theta_1}{m_0}+m+i+1;,1}{1}.
\end{align*}
\end{itemize}

\end{theorem}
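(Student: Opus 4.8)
\emph{Proof proposal.} The plan is to mirror the proof of Theorem~\ref{mainMM}, with the confluent hypergeometric function $\chf$ taking over the role played there by the Gauss function $\hf$. The starting point is Proposition~\ref{MEII} specialised to $\Lambda=m_0\delta_{0}$: here $\Lambda_0\equiv 0$ and $m_1=0$, so the integral term vanishes and the master equation collapses to the first-order linear ODE
\begin{equation*}
\tfrac{m_0}{2}\,g_\Lambda'(z)+\frac{\sigma z^2-(\sigma+\theta)z+\theta_1}{z(1-z)}\,g_\Lambda(z)=\bigl(\tfrac{m_0}{2}+\theta_1\bigr)p_1^\Lambda-\frac{\theta_0 z}{1-z},\qquad z\in D\setminus\{0\},
\end{equation*}
with boundary conditions $g_\Lambda(0)=0$ and $g_\Lambda(1)=1$ (the same equation also follows by multiplying \eqref{prk} by $z^{n+1}$ and summing over $n$, in the spirit of \eqref{edomm}).

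For part (i) the ODE is not needed: when $\theta_0=0$, \eqref{prk} reduces to the two-term recursion $(\tfrac{m_0(n+1)}{2}+\theta_1)p_{n+1}^\Lambda=\sigma p_n^\Lambda$, which iterates at once to $p_n^\Lambda=p_1^\Lambda\,(2\sigma/m_0)^{n-1}/(2+2\theta/m_0)_{n-1}^{\uparrow}$; the normalisation $\sum_{n\geq 1}p_n^\Lambda=1$ together with $\chf(1;c;x)=\sum_{k\geq 0}x^k/(c)_k^{\uparrow}$ identifies $p_1^\Lambda$ as the reciprocal of $\chf(1;2+2\theta/m_0;2\sigma/m_0)$, and summing $\sum_n p_n^\Lambda z^n$ yields \eqref{pgfWFu0}.

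For part (ii), first solve the ODE. After the partial-fraction identity $\frac{\sigma z^2-(\sigma+\theta)z+\theta_1}{z(1-z)}=-\sigma+\frac{\theta_1}{z}-\frac{\theta_0}{1-z}$, separation of variables in the homogeneous equation gives the basic solution $e^{2\sigma z/m_0}z^{-2\theta_1/m_0}(1-z)^{-2\theta_0/m_0}$, and variation of constants yields $g_\Lambda(z)=e^{2\sigma z/m_0}z^{-2\theta_1/m_0}(1-z)^{-2\theta_0/m_0}\int_0^z(\beta-\alpha\xi)\,\xi^{2\theta_1/m_0}(1-\xi)^{2\theta_0/m_0-1}e^{-2\sigma\xi/m_0}\,{\rm d}\xi$ with $\beta=(1+2\theta_1/m_0)p_1^\Lambda$ and $\alpha=\beta+2\theta_0/m_0$; the condition $g_\Lambda(0)=0$ is precisely what forces the lower limit of integration to be $0$. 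Letting $z\to 1$ and using that the integrand is integrable at $\xi=1$ (since $2\theta_0/m_0-1>-1$), finiteness of $g_\Lambda(1)=1$ forces $\beta I_0=\alpha I_1$, which determines $p_1^\Lambda$ and hence $\alpha,\beta$ in terms of $I_0,I_1$, giving \eqref{pgfWF}; as in the Moran case this identity is first checked on $(0,1)$ and then extended to all of $D$ by analyticity of both sides.

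The pmf \eqref{pmfWF} is obtained by extracting the Taylor coefficients of \eqref{pgfWF} at the origin. Substituting $\xi=zt$ cancels the singular factor $z^{-2\theta_1/m_0}$ against the integral and leaves $g_\Lambda(z)=z\,e^{2\sigma z/m_0}(1-z)^{-2\theta_0/m_0}\int_0^1(\beta-\alpha zt)\,t^{2\theta_1/m_0}(1-zt)^{2\theta_0/m_0-1}e^{-2\sigma zt/m_0}\,{\rm d}t$, which is manifestly analytic at $0$. Expanding $e^{2\sigma z/m_0}$, $(1-z)^{-2\theta_0/m_0}$, $(1-zt)^{2\theta_0/m_0-1}$ and $e^{-2\sigma zt/m_0}$ as power series, integrating in $t$ term by term, and collecting the coefficient of $z^n$ gives a multiple sum that consolidates, after summing out one index by a hypergeometric summation identity, into the $\HF(\,\cdot\,;1)$-expression in \eqref{pmfWF}. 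Equivalently, one may recognise the $t$-integral as a Humbert $\Phi_1$ function --- the confluent analogue of the Appell $F_1$ used in the Moran proof via \cite{A82} and Corollary~\ref{IvsF1} --- apply the corresponding transformation to absorb the factor $(1-z)^{-2\theta_0/m_0}$, and then expand. The main obstacle is exactly this last step: keeping track of the several summation indices and recognising the resulting hypergeometric structure; a minor additional point requiring care is the rigorous justification of the $z\to 1$ limit used to pin down $p_1^\Lambda$.
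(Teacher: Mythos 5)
Your proposal is correct in substance but, for part (ii), it follows a genuinely different route from the paper's. The paper proves (ii) by a discrete-to-continuum limit: after reducing to $m_0=2$, it specialises Theorem \ref{mainMM} to $s=\sigma/N$, $u_0=\theta_0/N$, $u_1=\theta_1/N$, shows $L_\infty^N\to L_\infty^\Lambda$ in distribution (Lemma \ref{m2k}, proved by induction in the Fearnhead recursions), and then lets $N\to\infty$ in \eqref{pgfMM} and \eqref{pmfMM} by dominated convergence, using $(N/\sigma)^{N+1+\theta_1+N\theta_0/(N+\sigma)}I_i^N\to I_i$ and $q_{n,i}^N\to q_{n,i}$; this buys the pmf formula essentially for free, since the Appell-$F_1$/${}_3F_2$ manipulations are inherited from the Moran computation and simply pass to the limit (the argument $1+s$ of the ${}_3F_2$ becoming $1$). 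You instead solve the ODE coming from Proposition \ref{MEII} directly, which is precisely the alternative the paper records in the remark containing \eqref{odeWF} but does not carry out: your partial-fraction decomposition, homogeneous solution, variation of constants with $\beta=(1+2\theta_1/m_0)p_1^\Lambda$ and $\alpha=\beta+2\theta_0/m_0$, and the use of $g_\Lambda(0)=0$ together with boundedness at $z=1$ to force $\beta I_0=\alpha I_1$ are all correct and reproduce \eqref{pgfWF}, and the substitution $\xi=zt$ is the right way to see analyticity on all of $D$. What remains schematic is exactly the step your route cannot avoid, namely extracting $p_n^\Lambda$ from \eqref{pgfWF}: your plan (term-by-term expansion consolidated by a summation identity, or equivalently the Humbert $\Phi_1$ representation and its Kummer-type transformation as the confluent analogue of Corollary \ref{IvsF1} and \cite[Eq.~(25)]{A82}) is the correct analogue of the paper's Moran calculation and should go through, but it is the technical heart of \eqref{pmfWF} and is not executed -- this is where the paper's limiting argument saves the work. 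A practical warning: if you do carry it out, the denominator multiplying $I_0\,q_{n,2}$ comes out as $2\theta_1+2m_0$ (consistent with letting $N\to\infty$ in \eqref{pmfMM}, where the two denominators $Nu_1+1$ and $Nu_1+2$ are distinct), so the printed \eqref{pmfWF}, which shows $2\theta_1+m_0$ in both places, appears to contain a typo in its second term; do not take a mismatch there as evidence against your computation. Part (i) of your argument coincides with the paper's.
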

\begin{remark}
In the case $\theta=0$, Eq. \eqref{pmfWFu0} implies that $L_\infty^\Lambda$ is a Poisson random variable with parameter $2\sigma/m_0$ conditioned to be strictly positive (see \cite{PP13}). This together with Proposition \ref{mdlwfm} permits to recover the classical result of Kimura \cite{Ki62}
$$P_x(X_\infty=1)=\frac{1-e^{-\frac{2\sigma}{m_0}x}}{1-e^{-\frac{2\sigma}{m_0}}},\quad x\in[0,1].$$
\end{remark}
\begin{remark}
 Note that Proposition \ref{MEII} yields 
  \begin{equation}\label{odeWF}
 \frac{m_0}{2} z(1-z)g_\Lambda'(z)+\left(\sigma z^2-(\sigma+\theta)z+\theta_1\right)g_\Lambda(z)=\left(\frac{m_0}{2} +\theta_1 \right)p_1^\Lambda z(1-z)-\theta_0 z^2, \quad z\in D_*.
 \end{equation}
 We can solve this ODE and show Theorem \ref{mainWF} following the proof of Theorem \ref{mainMM}. We provide here an alternative approach based on the results of Section \ref{s2} and the following lemma.
\end{remark}
\begin{lemma}\label{m2k}
If $s=\sigma/N$, $u_1=\theta_1/N$, $u_0=\theta_0/N$ and $m_0=2$  then
 $$L_\infty^N\xrightarrow[N\rightarrow\infty]{(d)}L_\infty^\Lambda.$$
\end{lemma}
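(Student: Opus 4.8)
The plan is to argue directly with the Fearnhead-type recursions of Section~\ref{s2}, using that under the stated scaling the Moran recursion \eqref{frmm} degenerates exactly into the Kingman recursion obtained from \eqref{frlm} for $\Lambda=2\delta_0$. Throughout I extend $a_n^N$ by $a_n^N:=0$ for $n\geq N$, so that for every $N$ the sequence $(a_n^N)_{n\in\Nb_0}$ lies in $[0,1]^{\Nb_0}$, is non-increasing, and satisfies $a_0^N=1$; likewise $p_n^N:=a_{n-1}^N-a_n^N$ and $p_n^N:=0$ for $n>N$.

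The first, and key, step is an a priori tail estimate uniform in $N$. Substituting $s=\sigma/N$, $u_1=\theta_1/N$, $u_0=\theta_0/N$, $u=\theta/N$ into \eqref{frmm} and multiplying by $N$ gives, for $2\leq n\leq N-1$,
\[
(n+\theta_1)\,a_n^N=\Bigl(n+\tfrac{N-n+1}{N}\sigma+\theta\Bigr)a_{n-1}^N-\tfrac{N-n+1}{N}\sigma\,a_{n-2}^N.
\]
Taking the difference of the $a_n^N$- and $a_{n-1}^N$-terms this rearranges to $(n+\theta_1)p_n^N=\tfrac{N-n+1}{N}\sigma\,p_{n-1}^N-\theta_0\,a_{n-1}^N\leq\sigma\,p_{n-1}^N$, whence $p_n^N\leq\tfrac{\sigma}{n+\theta_1}p_{n-1}^N$ for $2\leq n\leq N-1$; together with the elementary bound for $p_N^N$ coming from \eqref{pbcmm} this yields $p_n^N\leq\sigma^{n-1}/n!$ for all $n$ and $N$, hence $a_m^N\leq\sum_{k>m}\sigma^{k-1}/k!=:\varepsilon_m\to0$. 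In particular no mass escapes to infinity as $N\to\infty$.

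Next I would run a diagonal/compactness argument: along any subsequence there is a further subsequence $(N_j)$ with $a_n^{N_j}\to b_n$ for every $n$, where $(b_n)$ is non-increasing, $b_0=1$, and $0\leq b_n\leq\varepsilon_n$, so $b_n\to0$. Letting $N=N_j\to\infty$ in the displayed recursion (valid as soon as $N_j\geq n+1$) and using $\tfrac{N-n+1}{N}\to1$ shows that $(b_n)$ solves
\[
(n+\theta_1)\,b_n=(n+\sigma+\theta)\,b_{n-1}-\sigma\,b_{n-2},\qquad n\geq2,
\]
with $b_0=1$ and $\lim_n b_n=0$. A short computation shows that the tail probabilities $a_n^\Lambda$ for $\Lambda=2\delta_0$ satisfy exactly this system: in \eqref{frlm} only the $k=2$ term survives since $\lambda_{j,2}=2$, $\lambda_{j,k}=0$ for $k>2$, and $m_1=0$. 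It then remains to prove uniqueness of the decaying solution. If $e_n=b_n-b_n'$ is the difference of two solutions with $e_0=0$ and $e_n\to0$, then since the solution space with $e_0=0$ is one-dimensional it suffices to check that the fundamental solution $\hat e$ with $\hat e_0=0$, $\hat e_1=1$ does not vanish at infinity; setting $\delta_n=\hat e_n-\hat e_{n-1}$ one obtains $(n+\theta_1)\delta_n=\sigma\delta_{n-1}+\theta_0\hat e_{n-1}$, and an immediate induction gives $\delta_n\geq0$ and $\hat e_n\geq1$ for all $n\geq1$. Hence $e\equiv0$, so $b_n=a_n^\Lambda$. Since every subsequential limit equals $(a_n^\Lambda)$, we conclude $a_n^N\to a_n^\Lambda$ for each $n$, therefore $p_n^N\to p_n^\Lambda$ for each $n$, and as $\sum_n p_n^\Lambda=1$ this is convergence in distribution, i.e. $L_\infty^N\xrightarrow[N\rightarrow\infty]{(d)}L_\infty^\Lambda$.

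The main obstacle is the uniform tail bound: without it one cannot rule out that $(p_n^N)$ loses mass at infinity, in which case the subsequential limit need not be a probability measure and the identification with $(p_n^\Lambda)$ fails. The bound itself is short once one passes from \eqref{frmm} to the recursion for $p_n^N$, but spotting that reformulation is the crux; the uniqueness of the decaying solution of the limiting recursion is the second, more elementary, ingredient. As an alternative one could establish weak convergence of the time-rescaled chains $(L^N_{Nt})_{t\geq0}$ to $L^\Lambda$ through convergence of generators (the rescaled rates in \eqref{rmm} converge to those of $G_{L^\Lambda}$ with $\Lambda=2\delta_0$) and combine it with the same tail bound to pass to stationarity, but the recursion route is self-contained given the results of Section~\ref{s2}.
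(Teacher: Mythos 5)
Your proof is correct, but it follows a genuinely different route from the paper. The paper argues by induction on $n$ that $a_n^N\to a_n^\Lambda$: the case $n=0$ is trivial, the case $n=1$ is imported from an external result (\cite{KHB13}, Lemma~3, which gives convergence of $p_1^N$), and for $n\ge 2$ the two-step recursion \eqref{frmm} propagates the convergence forward, the limit being identified with $a_n^\Lambda$ because $(a_n^\Lambda)$ satisfies \eqref{frlm}. You avoid the external anchor at $n=1$ altogether: your uniform-in-$N$ bound $p_n^N\le \sigma^{n-1}/n!$ (which is exactly the scaled form of \eqref{prmm} plus \eqref{pbcmm}, and essentially the same bound the paper derives independently in the proof of Proposition~\ref{fmkm}) rules out mass escaping to infinity, a diagonal extraction plus passage to the limit in the scaled recursion shows every subsequential limit solves the Kingman system with $b_0=1$ and $b_n\to 0$, and your monotonicity argument for the fundamental solution $\hat e$ shows this system has a unique decaying solution, hence $b_n=a_n^\Lambda$. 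Note that this last uniqueness is in fact already part of the characterisation of $(a_n^\Lambda)$ quoted from \cite[Thm.~2.4]{BLW16} after \eqref{frlm}, so you could have cited it instead of reproving it; conversely, your direct proof makes the argument self-contained. In short: the paper's proof is shorter but leans on a cited convergence result for the first nontrivial tail probability, while yours trades that citation for a compactness-plus-uniqueness argument and, as a by-product, delivers the uniform tail estimate that the paper needs anyway for moment convergence.
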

\begin{proof}
It suffices to show that $a_n^N\to a_n^\Lambda$ as $N\to\infty$ for all $n\in\Nb_0$. We do this by induction on $n\in\Nb$. Since $a_0^N=1=a_0^\Lambda$, the assertion is true for $n=0$. The case $n=1$ follows from \cite[Lemma~3]{KHB13}. Assume that the assertion holds for all $k<n$. Then \eqref{frmm} implies that the limit of $a_n^N$ exists and is related to $a_{n-1}^\Lambda$ and $a_{n-2}^\Lambda$ via \eqref{frlm}. Therefore, $\lim_{N\rightarrow\infty}a_n^N=a_n^\Lambda$.
\end{proof}
\begin{proof}[Proof of Theorem \ref{mainWF}] 
{{\bf (i)}:} Identity \eqref{pmfWFu0} is obtained by iteration of \eqref{prk} and imposing $\sum_{n\in\Nb}p_n^\Lambda=1$. Formula \eqref{pgfWFu0} is a direct consequence of Eq. \eqref{pmfWFu0}.

{{\bf (ii)}:} Since $L_\infty^\Lambda(\sigma,\theta_0,\theta_1,m_0)$ is distributed as $L_\infty^\Lambda(2\sigma/m_0,2\theta_0/m_0, 2\theta_1/m_0,2)$, we assume without loss of generality that $m_0=2$. For the Moran model with parameters $s=\sigma/N$, $u_1=\theta_1/N$ and $u_0=\theta_0/N$ Theorem \ref{mainMM} yields
\begin{equation}\label{pn2pk}
 g_N(z)=\frac{\theta_0 \,I_0^N}{(I_0^N-I_1^N)}\frac{\left(1+\frac{\sigma z}{N}\right)^{N+\theta_1+ \frac{N\theta_0}{N+\sigma}}}{z^{\theta_1}\,(1-z)^{\frac{N\theta_0}{N+\sigma}}}\int_{0}^z\frac{(\frac{I_1^N}{I_0^N}-\xi)\,\xi^{\theta_1}\,(1-\xi)^{\frac{N\theta_0}{N+\sigma}-1}}{\left(1+\frac{\sigma z}{N}\right)^{N+1+\theta_1+ \frac{N\theta_0}{N+\sigma}}}\,{\rm d}\xi.
\end{equation}
Lemma \ref{m2k} implies that $g_N(z)\to g_\Lambda(z)$ as $N\to\infty$. In addition, by dominated convergence we get  
$$\left(N/\sigma\right)^{N+1+\theta_1+ \frac{N\theta_0}{N+\sigma}}I_i^N\xrightarrow[N\rightarrow\infty]{}I_i,\quad i\in\{0,1\}.$$
Hence, letting $N\to\infty$ in \eqref{pn2pk} and using dominated convergence yields \eqref{pgfWF}. Moreover, a straightforward calculation shows that $\lim_{N\to\infty}q_{n,i}^N=q_{n,i}$, $i\in\{0,1\}$. Thus, \eqref{pmfWF} follows by letting $N\to\infty$ in \eqref{pmfMM}.  
\end{proof}
\begin{proposition}\label{fmkm}
The random variable $L_\infty^\Lambda$ has mean
 $$E\left[L_\infty^\Lambda\right]=\frac{2(\sigma+\theta_0-\theta_1)+(m_0+2\theta_1)p_1}{m_0+2\theta_0}.$$
Moreover, $L_\infty^\Lambda$ has factorial moments of all orders and they satisfy
\begin{equation}\label{fmWF}
 ((n+1)m_0+2\theta_0)\,E\left[(L_\infty^\Lambda)_{n+1}^{\downarrow}\right]=2(n+1)\sigma\,E\left[(L_\infty^\Lambda)_{n}^{\downarrow}\right]- 2(n+1)\theta_1\,E\left[(L_\infty^\Lambda-1)_n^{\downarrow}\right],\quad n\in\Nb.
\end{equation}
In addition,
\begin{enumerate}
 \item if $\theta_0=0$, then
  \begin{equation*}
E[(L_\infty^\Lambda)_k^{\downarrow}]=k!\left( \pFq{1}{1}{k+1}{k+2+\frac{2\theta}{m_0}}{\frac{2\sigma}{m_0}}p_{k+1}^\Lambda+ \pFq{1}{1}{k}{k+1+\frac{2\theta}{m_0}}{\frac{2\sigma}{m_0}}p_{k}^\Lambda\right),\quad k\in\Nb.
\end{equation*}
\item if $\theta_1=0$, then
$$E\left[(L_\infty^\Lambda)_{n}^{\downarrow}\right]=\frac{n!}{\left(2+\frac{2\theta}{m_0}\right)_{n-1}^{\uparrow}}\left(\frac{2\sigma}{m_0}\right)^{n-1}E[L_\infty^\Lambda], \quad n\in\Nb.$$
 \end{enumerate}
\end{proposition}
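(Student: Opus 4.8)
The strategy is to mirror the treatment of the Moran model in Section~\ref{s4}, working now from the ordinary differential equation \eqref{odeWF} for $g_\Lambda$, which is the specialisation of Master equation~II (Proposition~\ref{MEII}) to $\Lambda=m_0\delta_0$. First I would recast \eqref{odeWF} in a form analogous to \eqref{pp}: using the algebraic identity $\sigma z^2-(\sigma+\theta)z+\theta_1=-\sigma z(1-z)+\theta_1(1-z)-\theta_0 z$ and dividing \eqref{odeWF} by $z(1-z)$, one gets for $z\in D\setminus\{0\}$
$$\frac{m_0}{2}\,g_\Lambda'(z)=\sigma g_\Lambda(z)-\frac{\theta_1}{z}\,g_\Lambda(z)-\theta_0\,\frac{1-g_\Lambda(z)}{1-z}+\big(\tfrac{m_0}{2}+\theta_1\big)p_1^\Lambda+\theta_0 .$$
Letting $z\uparrow 1$ and using $g_\Lambda(1)=1$, $\lim_{z\uparrow1}g_\Lambda'(z)=E[L_\infty^\Lambda]$ and $\lim_{z\uparrow1}\big(1-g_\Lambda(z)\big)/(1-z)=E[L_\infty^\Lambda]$ (l'H\^{o}pital's rule, since $g_\Lambda(1)=1$) turns this into a linear equation for $E[L_\infty^\Lambda]$, whose solution is the asserted formula for the mean.

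For the factorial moments one must first know that $L_\infty^\Lambda$ has finite factorial moments of every order: when $\theta_0=0$ this is immediate from \eqref{pmfWFu0} (a Poisson law conditioned to be positive), and when $\theta_0>0$ it follows from \eqref{pgfWF}, since the normalisation $g_\Lambda(1)=1$ forces the integral appearing there to vanish at $z=1$, so that the $(1-z)^{-2\theta_0/m_0}$ prefactor is cancelled and $g_\Lambda$ in fact extends analytically across $z=1$. Granting this, I would differentiate the displayed identity $n$ times by the general Leibniz rule, exactly as in the passage from \eqref{pp} to \eqref{dnp}: the constant terms drop out for $n\ge1$, the term $\theta_1 g_\Lambda(z)/z$ contributes $\theta_1\sum_{k=0}^n\binom{n}{k}g_\Lambda^{(n-k)}(z)(-1)^k k!\,z^{-k-1}$, and the term $\theta_0\big(1-g_\Lambda(z)\big)/(1-z)$ contributes the $n$-th derivative of $\big(1-g_\Lambda(z)\big)/(1-z)$. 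Letting $z\uparrow1$ and invoking the two auxiliary limits \eqref{l1} and \eqref{l2} verbatim (with $g_N$ replaced by $g_\Lambda$) --- the first producing $E[(L_\infty^\Lambda-1)_n^{\downarrow}]$, the second, after an application of l'H\^{o}pital's rule to the quotient over $(z-1)^{n+1}$, producing a multiple of $E[(L_\infty^\Lambda)_{n+1}^{\downarrow}]$ --- yields, after clearing denominators (multiplying through by $2(n+1)$), exactly the recursion \eqref{fmWF}.

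For the two explicit cases I would argue as in Proposition~\ref{fmMM}. When $\theta_1=0$, \eqref{fmWF} reduces to the one-term telescoping relation $((n+1)m_0+2\theta_0)\,E[(L_\infty^\Lambda)_{n+1}^{\downarrow}]=2(n+1)\sigma\,E[(L_\infty^\Lambda)_{n}^{\downarrow}]$, whose iteration from $n=1$ gives the closed form with the rising factorial $(2+2\theta/m_0)_{n-1}^{\uparrow}$ in the denominator. When $\theta_0=0$, I would instead compute $E[(L_\infty^\Lambda)_k^{\downarrow}]=\sum_{n\ge k}(n)_k^{\downarrow}p_n^\Lambda$ directly from \eqref{pmfWFu0}, identify the resulting (absolutely convergent) series as $p_1^\Lambda\,x^{k-1}f^{(k)}(x)$ at $x=2\sigma/m_0$ with $f(x):=x\,\chf(1;2+2\theta/m_0;x)$, and use the derivative identity for $\chf$ (cf.\ \cite[Eq.~(9.2.3)]{leb}) to express $f^{(k)}$ through $\chf(k+1;\cdot;\cdot)$ and $\chf(k;\cdot;\cdot)$, which is the stated formula. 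An alternative to the whole ODE route is to let $N\to\infty$ in Corollary~\ref{mMM} and in the first assertion of Proposition~\ref{fmMM} along the scaling of Lemma~\ref{m2k}, but then one has to justify separately that the factorial moments of $L_\infty^N$ converge, which the argument above avoids.

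The main obstacle is the step of legitimising the termwise differentiation and the repeated l'H\^{o}pital limits when $\theta_0>0$, i.e.\ the fact that $L_\infty^\Lambda$ has finite factorial moments of all orders and that $g_\Lambda$ together with its derivatives is well behaved as $z\uparrow1$. Since $z=1$ lies on the boundary of the disc $D$ on which $g_\Lambda$ is a priori only known to be analytic, this is not a soft matter and genuinely relies on the explicit representation of Theorem~\ref{mainWF}(ii); once it is in place, everything else is a transcription of the computations of Section~\ref{s4}.
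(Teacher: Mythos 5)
Your argument is correct, but it follows a genuinely different route from the paper for the mean and for the recursion \eqref{fmWF}. The paper never touches the ODE \eqref{odeWF}: it obtains both statements by letting $N\to\infty$ in Corollary \ref{mMM} and Proposition \ref{fmMM} for the Moran model with $s=\sigma/N$, $u_i=\theta_i/N$, the convergence of the factorial moments of $L_\infty^N$ being justified by the uniform bound $p_n^N\le\sigma^{n-1}/(2+\theta_1)_{n-1}^{\uparrow}$ read off from \eqref{prmm}, together with Lemma \ref{m2k} and dominated convergence; finiteness of the moments of $L_\infty^\Lambda$ comes from the same bound applied to \eqref{prk}. You instead transcribe the Section \ref{s4} computations directly at the level of $g_\Lambda$, and the computations check out: your rewriting of \eqref{odeWF} is the exact analogue of \eqref{pp}, the limit $z\uparrow1$ gives the stated mean, and the $n$-fold Leibniz differentiation plus the analogues of \eqref{l1}--\eqref{l2} (the latter giving $E[(L_\infty^\Lambda)_{n+1}^{\downarrow}]/(n+1)$) yields \eqref{fmWF} after multiplying through by $2(n+1)$; your treatments of cases (1) and (2) coincide with the paper's. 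What each approach buys: yours avoids having to pass moments to the limit (the separate justification you correctly flag as needed for the $N\to\infty$ route), while the paper's avoids any boundary analysis at $z=1$. On that point you make life harder than necessary: the analytic continuation of $g_\Lambda$ across $z=1$ via \eqref{pgfWF} is true (the integral vanishes at $z=1$, and expanding the analytic part of the integrand around $\xi=1$ and integrating termwise shows $(1-z)^{-2\theta_0/m_0}\int_z^1(\cdots)$ is a power series in $1-z$), but it deserves that extra line and is in fact superfluous, since \eqref{prk} immediately gives $p_{n+1}^\Lambda\le \tfrac{2\sigma}{(n+1)m_0+2\theta_1}\,p_n^\Lambda$, hence factorial decay of $(p_n^\Lambda)$, so $g_\Lambda$ is entire and every limit and termwise differentiation at $z=1$ is trivially legitimate --- this simple bound is exactly what the paper uses. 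A small slip: the derivative identity needed for $\chf$ in case (1) is \cite[Eq.~(9.9.5)]{leb}, not Eq.~(9.2.3), which is the $\hf$ identity used in the Moran case.
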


\begin{proof}
Without loss of generality we assume that $m_0=2$. First note that \eqref{prk} implies that $p_n^\Lambda\leq \sigma^{n-1}/(2+\theta_1)_{n-1}^{\uparrow}$, $n\in\Nb$. Thus, $L_\infty^\Lambda$ admits moments of all orders. Similarly, using 
\eqref{prmm} with $s=\sigma/N$, $u_1=\theta_1/N$, $u_0=\theta_0/N$, we  get $ p_n^N\leq \sigma^{n-1}/(2+\theta_1)_{n-1}^{\uparrow}$. Thus, by dominated convergence and Lemma \ref{m2k} we conclude that
$$E[(L_\infty^N)_k^{\downarrow}]=\sum_{n=k}^N p_n^N (n)_k^{\downarrow}\xrightarrow[N\rightarrow\infty]{}\sum_{n=k}^\infty p_n^\Lambda\,(n)_k^{\downarrow}=E[(L_\infty^\Lambda)_k^{\downarrow}].$$
The formula for the mean of $L_\infty^\Lambda $ and the recursion \eqref{fmWF} follow by letting $N\to\infty $ in Corollary \ref{mMM} and Proposition \ref{fmMM}, respectively. Now, we prove assertion (1). Note that \eqref{pmfWFu0} yields
$$E[(L_\infty^\Lambda)_k^{\downarrow}]=p_1\sum\limits_{n=k}^N\frac{\left(\frac{2\sigma}{m_0}\right)^{n-1}}{\left(2+\frac{2\theta}{m_0}\right)_{n-1}^{\uparrow}}(n)_k^{\downarrow}=p_1^\Lambda\left(\frac{2\sigma}{m_0}\right)^{k-1} f^{(k)}\left(\frac{2\sigma}{m_0}\right),$$
where $f(x)=\sum_{n=1}^\infty x^n/\left(2+\frac{2\theta}{m_0}\right)_{n-1}^{\uparrow} = \pFq{1}{1}{1}{2+\frac{2\theta}{m_0}}{x}\, x$. The result follows from \cite[p.~261, Eq.~(9.9.5)]{leb}. Assertion (2) follows iterating \eqref{fmWF} with $\theta_1=0$.
\end{proof}
\section{Solving the Fearnhead-type recursion for the star-shaped model}\label{s7}
In this section we assume that the measure $\Lambda$ is concentrated in $1$ with total mass $m_1:=\Lambda(\{1\})$, i.e. blocks merge according to the star-shaped coalescent. In particular, $\sigma_\Lambda=\infty$, and therefore, the block counting process is positive recurrent for any $\sigma>0$. Note that \eqref{frlm} reads
\begin{equation}\label{arss}
\left(\frac{m_1}{n}+\theta+\sigma\right)a_n^\Lambda =\sigma a_{n-1}^\Lambda+\theta_1 a_{n+1}^\Lambda, \quad n\in\Nb.
\end{equation}
In addition, $a_0^\Lambda=\sum_{n=1}^\infty p_n^\Lambda=1$. The following is the main result of this section.
\begin{theorem}\label{mainSS}
For the star-shaped model with selection parameter $\sigma>0$ and mutation parameters $\theta_0,\theta_1\geq 0$ the following holds
\begin{itemize}
 \item[(i)] If $\theta_1=0$, then 
 \begin{equation}\label{pmfSSu0}
p_n^\Lambda=\left(\frac{n\theta_0+m_1}{n(\sigma+\theta_0)+m_1}\right)\frac{(n-1)!}{\left(1+\frac{m_1}{\sigma+\theta_0}\right)_{n-1}^{\uparrow}}\left(\frac{\sigma}{\sigma+\theta_0}\right)^{n-1}, \quad n\in\Nb,
\end{equation}
and
 \begin{equation}\label{pgfSSu0}
  g_\Lambda(z)=1-(1-z)\,\pFq{2}{1}{1;,1}{1+\frac{m_1}{\sigma+\theta_0}}{\frac{\sigma z}{\sigma+\theta_0}},\quad z\in D.
 \end{equation}
\item[(ii)] If $\theta_1>0$, then for all $z\in D\setminus\{x_-\}$
\begin{equation}\label{pgfSS}
 g_\Lambda(z)=z\left(1-\frac{\sigma(1-z)}{\sigma z^2-(\sigma+\theta)z+\theta_1}\left(\frac{1-\frac{z}{x_+}}{1-\frac{z}{x_-}}\right)^{\frac{m_1}{d}}\int_z^{x_-}\left(\frac{1-\frac{u}{x_-}}{1-\frac{u}{x_+}}\right)^{\frac{m_1}{d}}{\rm d}u\right),
\end{equation}
where $d:=\sqrt{(\sigma+\theta)^2-4\sigma\theta_1}$, $x_-:=(\sigma+\theta-d)/(2\sigma)\in(0,1)$ and $x_+:=(\sigma+\theta+d)/(2\sigma)>1$. In particular,
$$p_1^\Lambda=1-\frac{\sigma}{\theta_1}\int_0^{x_-}\left(\frac{1-\frac{u}{x_-}}{1-\frac{u}{x_+}}\right)^{\frac{m_1}{d}}{\rm d}u.$$
\end{itemize}
\end{theorem}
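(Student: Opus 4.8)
The plan is to follow the template of Theorems \ref{mainMM} and \ref{mainWF}: rewrite the Fearnhead-type recursion as an equation for the probability generating function $g_\Lambda$, solve it, and recover the constants from the boundary data. The simplifying feature for $\Lambda=m_1\delta_1$ is that $\lambda_{k,j}=0$ whenever $j<k$, so $m_0=0$ and all coefficients $c_{n,k}$, hence the polynomials $c_k(z)$, of Section \ref{s5} vanish (the star-shaped mergers are entirely encoded by the $m_1/n$ term). Specialising Proposition \ref{MEI} to this case leaves the first-order linear integro-differential equation
\begin{equation*}
m_1\int_0^z\frac{u-g_\Lambda(u)}{u(1-u)}\,{\rm d}u+\frac{Q(z)}{z(1-z)}\,g_\Lambda(z)=\theta_1 p_1^\Lambda-\frac{\theta_0 z}{1-z},\qquad Q(z):=\sigma z^2-(\sigma+\theta)z+\theta_1,
\end{equation*}
on $D\setminus\{0\}$, with $g_\Lambda(0)=0$ and $g_\Lambda(1)=1$. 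Differentiating once removes the integral and the unknown $p_1^\Lambda$ and yields a first-order linear ODE for $g_\Lambda$ on $D$ whose singular points are $0$, $1$ and the two roots of $Q$.

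For part (ii), $\theta_1>0$: the quadratic factors as $Q(z)=\sigma(z-x_-)(z-x_+)$ with $x_-\in(0,1)$, $x_+>1$, $x_-x_+=\theta_1/\sigma$ and $x_+-x_-=d/\sigma$. A partial-fraction computation gives $\int{\rm d}z/Q(z)=-d^{-1}\log\bigl((z-x_-)/(z-x_+)\bigr)$, so the homogeneous equation is solved by $g_h(z):=\frac{z(1-z)}{Q(z)}\bigl((1-z/x_+)/(1-z/x_-)\bigr)^{m_1/d}$. I would then verify directly that, for any lower limit $c$, the function $z\mapsto z-\sigma\,g_h(z)\int_z^{c}\bigl((1-u/x_-)/(1-u/x_+)\bigr)^{m_1/d}{\rm d}u$ solves the inhomogeneous ODE; the algebraic identity behind this is $(1-z)Q'(z)+Q(z)+\sigma(1-z)^2=-\theta_0$. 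The integration constant is fixed by analyticity: $g_\Lambda$ is a power series of radius of convergence $\ge1$, hence analytic at the interior root $x_-$, whereas $g_h(z)\sim(z-x_-)^{-1-m_1/d}$ there, so only $c=x_-$ makes the integral vanish to the matching order. The condition $g_\Lambda(0)=0$ is then automatic; $g_\Lambda(1)=1$ follows from $g_h(1)=0$ using $Q(1)=-\theta_0$ (valid when $\theta_0>0$; the degenerate sub-case $\theta_0=0$, where a root of $Q$ equals $1$, is handled as a limit or separately); and letting $z\to0$, with $Q(0)=\theta_1$, gives the stated formula for $p_1^\Lambda$.

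For part (i), $\theta_1=0$: the recursion \eqref{arss} degenerates to the two-term relation $(m_1/n+\sigma+\theta_0)\,a_n^\Lambda=\sigma\,a_{n-1}^\Lambda$, which I would iterate from $a_0^\Lambda=1$ to get $a_n^\Lambda=\prod_{j=1}^{n}j\sigma/(m_1+j(\sigma+\theta_0))$; since the ratio tends to $\sigma/(\sigma+\theta_0)<1$ we have $a_n^\Lambda\to0$, so $\sum_n p_n^\Lambda=1$ automatically. Then $p_n^\Lambda=a_{n-1}^\Lambda-a_n^\Lambda$ gives \eqref{pmfSSu0}, and summing $A(z):=\sum_{n\ge0}a_n^\Lambda z^n$ identifies it (via $(1)_n^\uparrow=n!$) with the Gauss hypergeometric function $\hf(1,1;1+m_1/(\sigma+\theta_0);\sigma z/(\sigma+\theta_0))$, whence $g_\Lambda(z)=1-(1-z)A(z)$ is \eqref{pgfSSu0}.

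The main obstacle is the constant-fitting step in part (ii): one must argue cleanly that holomorphy of $g_\Lambda$ on $D$ — in particular at the branch/pole point $x_-$ of the closed-form expression — singles out the unique admissible solution of the differentiated ODE, and that this solution does reproduce the normalisation $g_\Lambda(1)=1$ that was lost upon differentiation. Some care is also needed with the branch cut of $\bigl((1-u/x_-)/(1-u/x_+)\bigr)^{m_1/d}$ when $m_1/d\notin\Nb$, which is why the statement restricts the formula to $D\setminus\{x_-\}$.
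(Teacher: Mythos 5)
Your proposal is correct and follows essentially the same route as the paper: specialise the master equation (the $c_{n,k}$ indeed vanish for $\Lambda=m_1\delta_1$), differentiate to kill the constant $\theta_1 p_1^\Lambda$, solve the resulting first-order linear ODE with homogeneous solution proportional to $\frac{z(1-z)}{\sigma z^2-(\sigma+\theta)z+\theta_1}\bigl(\tfrac{1-z/x_+}{1-z/x_-}\bigr)^{m_1/d}$, fix the integration constant by analyticity of $g_\Lambda$ at the interior singularity $x_-$, and read off $p_1^\Lambda$ by letting $z\to0$; part (i) is the same direct iteration of the two-term recursion. The only cosmetic difference is that the paper first substitutes $f(z)=(z-g_\Lambda(z))/(z(1-z))$ and uses variation of constants, whereas you work with $g_\Lambda$ directly and verify the particular solution via the (correct) identity $(1-z)Q'(z)+Q(z)+\sigma(1-z)^2=-\theta_0$.
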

\begin{proof}
 {{\bf (i)}:} In this case, Eq. \eqref{arss} takes the form $(m_1/n+\theta+\sigma)a_n^\Lambda =\sigma a_{n-1}^\Lambda$, $n\in\Nb$, with obvious solution
 \begin{equation}\label{anss}
  a_n^\Lambda=\frac{n!}{\left(1+\frac{m_1}{\sigma+\theta_0}\right)_{n}^{\uparrow}}\left(\frac{\sigma}{\sigma+\theta_0}\right)^{n}, \quad n\in\Nb_0.
 \end{equation}
 Plugging this expression in $p_n^\Lambda=a_{n-1}^\Lambda-a_n^\Lambda$ yields \eqref{pmfSSu0}. Moreover, \eqref{pgfSSu0} follows from \eqref{anss} and the identity $\sum_{n=0}^\infty a_n^\Lambda z^n=(1-g_\Lambda(z))/(1-z)$. 
 
  {{\bf (ii)}:} First note that Proposition \ref{MEII} yields
  \begin{equation}\label{DESS}
 m_1z(1-z)\int_0^z\frac{u-g_\Lambda(u)}{u(1-u)}{\rm d}u+\left(\sigma z^2-(\sigma+\theta)z+\theta_1\right)g_\Lambda(z)=\theta_1 p_1^\Lambda z(1-z)-\theta_0 z^2, \quad z\in D\setminus\{0\}.
\end{equation}
Hence, the function $f:D\setminus\{0\}\to\Cb$ defined via $f(z):=(z-g_\Lambda(z))/(z(1-z))$ satisfies
  $$m_1 \int_0^z f(u){\rm d}u -(\sigma z^2 -(\sigma+\theta)z+\theta_1)f(z)=\theta_1 (p_1^\Lambda-1)+\sigma z.$$
 Differentiating this equation leads to the following first order differential equation
 \begin{equation}\label{aux}
  (m_1+\sigma+\theta-2\sigma z) f(z)-(\sigma z^2 -(\sigma+\theta)z+\theta_1)f^{\prime}(z)=\sigma.
 \end{equation}
 The solution of the homogeneous differential equation $(m_1+\sigma+\theta-2\sigma z) f_0(z)=(\sigma z^2 -(\sigma+\theta)z+\theta_1)f_0^\prime(z),$
 is, up to a multiplicative constant, given by 
 $$f_0(z)= \frac{1}{\sigma z^2-(\sigma+\theta)z+\theta_1}\left(\frac{1-\frac{z}{x_+}}{1-\frac{z}{x_-}}\right)^{\frac{m_1}{d}},\quad z\in D\setminus\{x_-\},$$
 where $d:=\sqrt{(\sigma+\theta)^2-4\sigma\theta_1}$, $x_-:=(\sigma+\theta-d)/(2\sigma)$ and $x_+:=(\sigma+\theta+d)/(2\sigma)$ ($x_-$ and $x_+$ are the roots of the polynomial $z\mapsto\sigma z^2 -(\sigma+\theta)z+\theta_1$). Therefore, the solution of the inhomogeneous differential equation \eqref{aux} is of the form
 $$f(z)=f_0(z)\left(C-\sigma \int_0^{z}\left(\frac{1-\frac{u}{x_-}}{1-\frac{u}{x_+}}\right)^{\frac{m_1}{d}}{\rm d}u\right), \quad z\in D\setminus\{x_-\}.$$
 Since $f_0$ has a singularity at $z=x_-$, but $f$ is analytic in $D\setminus\{0\}$, we get $C=\sigma\int_0^{x_-}\left(\frac{1-u/x_-}{1-u/x_+}\right)^{\frac{m_1}{d}}{\rm d}u$. Plugging this value of $C$ into the previous formula for $f$ yields 
 \begin{equation}\label{lastf}
  f(z)=\sigma f_0(z)\int_z^{x_-}\left(\frac{1-\frac{u}{x_-}}{1-\frac{u}{x_+}}\right)^{\frac{m_1}{d}}{\rm d}u,\quad z\in D\setminus\{x_-\}.
 \end{equation}
Since $g_\Lambda(z)=z(1-(1-z)f(z))$, \eqref{pgfSS} follows. Letting $z\to 0$ in \eqref{lastf} yields the expression for $p_1^\Lambda$. 
\end{proof}

\begin{remark}
Making the substitution $y=(x_{-}-u)/(x_-z)$ in \eqref{lastf} and applying \eqref{irf21} we obtain
$$f(z)=\frac{d}{(m_1+d)(x_+-x_-)}\left(\frac{x_+-z}{x_+-x_-}\right)^{\gamma-1} \pFq{2}{1}{\frac{m_1}{d};,\frac{m_1}{d}+1}{\frac{m_1}{d}+2}{\frac{z-x_-}{x_+-x_-}},$$
for $z\in B:=\{w\in D: |z-x_-|<x_+-x_-\}$. Moreover, from \cite[p.~247, Eqs.~(9.5.1)~and~(9.5.2)]{leb}, the previous identity translates into
$$f(z)=\frac{d}{(m_1+d)(x_+-x_-)}\pFq{2}{1}{2;,1}{\frac{m_1}{d}+2}{\frac{z-x_-}{x_+-x_-}}, \quad z\in B.$$
From this expression one can easily obtain the coefficients of the series expansion of $f$ around $x_-$. However, a series expansion for $f$ around $0$ using this formula is only possible if $2x_-<x_+$ (i.e. $(\sigma+\theta)^2> 9\theta_1 \sigma/2 $). In this case, using \cite[p.~241, Eq.~(9.2.3)]{leb} we deduce that $f(z)=\sum_{k=0}^\infty f_k z^k$, where
$$f_k=\frac{d}{(m_1+d)}\frac{(2)_k^{\uparrow}\,(x_+-x_-)^{-k+1}}{(\frac{m_1}{d}+2)_k^{\uparrow}}\pFq{2}{1}{2+k;,1+k}{\frac{m_1}{d}+2+k}{\frac{-x_-}{x_+-x_-}}.$$
The coefficients $(p_k^\Lambda)_{k\in\Nb}$ are obtained by setting $p_1^\Lambda=1-f_0$ and $p_{k+1}^\Lambda=f_{k-1}-f_k$, $k\in\Nb$.

In the case, where $2x_-\geq x_+ $, we can proceed as follows. We set $a_1^\Lambda=1-p_1^\Lambda$. Then using $a_0^\Lambda=0$, we obtain the values $a_2^\Lambda, a_3^\Lambda,...$ by successive substitution in \eqref{arss}. Finally we set $p_n^\Lambda=a_{n-1}^\Lambda-a_n^\Lambda$.
\end{remark}

\section{Some further comments and results for the Bolthausen--Sznitman model}\label{s8}
\subsection{Solving the Fearnhead-type recursion for the Bolthausen--Sznitman model}
Let us assume that $\Lambda$ is the uniform measure on $[0,1]$, i.e. blocks merge according to the Bolthausen--Sznitman coalescent. Since in this case $\sigma_\Lambda=\infty$, then $L^\Lambda$ is positive recurrent for any $\sigma>0$. Moreover, we have shown in Section \ref{s2} that $L_\infty^\Lambda\sim\textrm{Geom}(1-\rho)$, where $\rho$ is the unique solution to Eq. \eqref{cg3b} (see Corollary \ref{geomlawBS}). In this section we would like to relate this result with the results obtained in Section \ref{s5}.
\begin{lemma}[Carleman integral equation]\label{MEBS}
The function $\rho_\Lambda$ defined via $\rho_\Lambda(x):=g_\Lambda(x)/x$ , $x\in (0,1)$, is a solution of the Carleman singular integral equation
\begin{equation}\label{CE}
 \alpha(x) \rho_\Lambda(x)-\vp_0^1 \frac{\rho_\Lambda(t)}{t-x}{\rm d}t=f(x),\quad x\in(0,1),
\end{equation}
where $\alpha(x):=\sigma+\log(1-x)-\log(x)-\frac{\theta_1}{x}+\frac{\theta_0}{1-x}$, $f(x):=\frac{\theta_0}{1-x}-\frac{\theta_1 p_1^\Lambda}{x}$ and $\vp_a^b h(t){\rm d}t$ denotes the Cauchy principal value of a function $h$ (provided this value exists).
\end{lemma}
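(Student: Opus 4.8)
The plan is to specialise the first master equation, Proposition \ref{MEI}, to the uniform measure and to recognise the sum $\sum_{k\ge 2}p_k^\Lambda c_k(z)$ occurring there as a Cauchy principal value integral. For $\Lambda$ uniform on $[0,1]$ we have $m_0=m_1=0$ and $\Lambda_0({\rm d}\xi)={\rm d}\xi$ on $(0,1)$; moreover $\sigma_\Lambda=\infty$, so $L^\Lambda$ is positive recurrent, $g_\Lambda$ is analytic on $D$, and $\rho_\Lambda(x)=g_\Lambda(x)/x$ is real-analytic and bounded on $(0,1)$. These regularity facts, together with $E[L_\infty^\Lambda]<\infty$ (Corollary \ref{geomlawBS}), will be used to justify the analytic manipulations below.

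First I would evaluate the coefficients $c_{n,k}$. For $1\le n\le k-1$ and $\ell\ge k+1$ one has $\int_{(0,1)}\xi^{\ell-n-2}(1-\xi)^n\,{\rm d}\xi=B(\ell-n-1,n+1)$ (legitimate since $\ell\ge n+2$), and combining this with $\binom{\ell-1}{\ell-n}=\binom{\ell-1}{n-1}$ the summand in the definition of $c_{n,k}$ collapses to $n/[(\ell-n)(\ell-n-1)]$. The telescoping series $\sum_{\ell\ge k+1}\big(\tfrac1{\ell-n-1}-\tfrac1{\ell-n}\big)$ then yields $c_{n,k}=1/(k-n)$, hence $c_k(z)=\sum_{n=1}^{k-1}z^n/(k-n)$. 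Writing $1/(k-n)=\int_0^1 t^{k-n-1}\,{\rm d}t$ and summing the finite geometric series in $t$, I obtain the closed form $c_k(z)=z\int_0^1\frac{t^{k-1}-z^{k-1}}{t-z}\,{\rm d}t$, the integrand being the polynomial $\sum_{i=0}^{k-2}t^{i}z^{k-2-i}$, so no singularity at $t=z$ is created.

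Next I would interchange summation and integration --- legitimate from $|t^{k-1}-z^{k-1}|\le(k-1)|t-z|$ on $[0,1]$ and $\sum_k(k-1)p_k^\Lambda=E[L_\infty^\Lambda]<\infty$ --- and use $\sum_{k\ge 2}p_k^\Lambda t^{k-1}=\rho_\Lambda(t)-p_1^\Lambda$ to get
\[
\sum_{k\ge 2}p_k^\Lambda c_k(z)=z\int_0^1\frac{\rho_\Lambda(t)-\rho_\Lambda(z)}{t-z}\,{\rm d}t .
\]
Taking $z=x\in(0,1)$, the difference quotient extends continuously across $t=x$, so the integral splits into principal values; using $\vp_0^1\frac{{\rm d}t}{t-x}=\log(1-x)-\log x$ this gives $\sum_{k\ge 2}p_k^\Lambda c_k(x)=x\,\vp_0^1\frac{\rho_\Lambda(t)}{t-x}\,{\rm d}t-x\big(\log(1-x)-\log x\big)\rho_\Lambda(x)$.

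Finally I would substitute this into Proposition \ref{MEI} taken with $m_0=m_1=0$ and $z=x$, divide through by $x$, and rearrange. The only nontrivial algebraic input is the identity
\[
-\,\frac{\sigma x^2-(\sigma+\theta)x+\theta_1}{x(1-x)}=\sigma+\frac{\theta_0}{1-x}-\frac{\theta_1}{x},
\]
which makes the coefficient of $\rho_\Lambda(x)$ equal to exactly $\alpha(x)$; the remaining terms reassemble into $-\vp_0^1\frac{\rho_\Lambda(t)}{t-x}\,{\rm d}t$ on the left and $\theta_0/(1-x)-\theta_1 p_1^\Lambda/x=f(x)$ on the right, which is \eqref{CE}. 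One could alternatively start from Proposition \ref{MEII}, carry out the $\xi$-integration by Fubini, and fuse the resulting integrals over $(0,z)$ and $(z,1)$ into a single principal value; this is more laborious but leads to the same equation and can serve as a cross-check. I expect the main difficulty to be bookkeeping rather than conceptual: the delicate points are the interchange of the series with the integral and the passage to principal-value integrals across the singularity at $t=x$, both of which are controlled by the analyticity of $g_\Lambda$ on $D\supset(0,1)$ and the finiteness of $E[L_\infty^\Lambda]$.
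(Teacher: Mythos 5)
Your proposal is correct and follows essentially the same route as the paper: compute $c_{n,k}=1/(k-n)$, represent $c_k(z)$ as $z\int_0^1\frac{t^{k-1}-z^{k-1}}{t-z}\,{\rm d}t$, sum against $(p_k^\Lambda)$ to get $z\int_0^1\frac{\rho_\Lambda(t)-\rho_\Lambda(z)}{t-z}\,{\rm d}t$, substitute into Proposition \ref{MEI} with $m_0=m_1=0$, and split off the principal value using $\vp_0^1\frac{{\rm d}t}{t-x}=\log(1-x)-\log(x)$. The only differences are that you explicitly verify $c_{n,k}=1/(k-n)$ via the Beta integral and justify the sum--integral interchange, details the paper takes for granted.
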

\begin{proof}
 Since $c_{n,k}=1/(k-n)$, we have 
 $$c_k(x)=\sum\limits_{n=1}^{k-1}\frac{x^n}{k-n}=\int_0^1 u^{k-1}\sum_{n=1}^{k-1}\left(\frac{x}{u}\right)^n {\rm d}u=x\int_0^1\frac{u^{k-1}-x^{k-1}}{u-z}{\rm d}u,$$
and hence
 $$\sum_{k=2}^{\infty}p_k^\Lambda c_k(x)=x\int_0^1\frac{\rho_\Lambda(u)-\rho_\Lambda(x)}{u-x}{\rm d}u.$$
Combining this with Proposition \ref{MEI} we obtain 
\begin{equation}\label{ieBS}
\left(\frac{\theta_1}{x}-\frac{\theta_0}{1-x}-\sigma \right) \rho_\Lambda(x)+ \int_0^1\frac{\rho_\Lambda(x)-\rho_\Lambda(t)}{x-t}{\rm d}t=\frac{\theta_1 p_1^\Lambda}{x}-\frac{\theta_0}{1-x},\quad x\in(0,1).
 \end{equation}
The result follows using that $\int_0^1\frac{\rho_\Lambda(x)-\rho_\Lambda(t)}{x-t}{\rm d}t=\vp_0^1 \frac{\rho_\Lambda(t)}{t-x}{\rm d}t-\rho_\Lambda(x)\left(\log(1-x)-\log(x)\right)$.
\end{proof}
The solution of \eqref{CE} with boundary conditions $\lim_{x\to 0}\rho_\Lambda(x)=p_1^\Lambda$ and $\lim_{x\to 0}\rho_\Lambda(x)=1$ can be derived via the method described in \cite[Eq.~(2.1)]{EK87} (see also \cite[Sect.~4.4]{Tri}). This approach involves quite technical calculations and leads to rather complicated formulas for $\rho_\Lambda$ and $p_1^\Lambda$, from which it seems not straightforward to infer that the underlying distribution is geometric. However, knowing that $L_\infty^\Lambda\sim\textrm{Geom}(1-\rho)$ for some $\rho\in(0,1)$, we can deduce the value of $\rho$ from Lemma \ref{MEBS}. Indeed, in this case
  $$p_1^\Lambda=1-\rho\quad\textrm{and}\quad \rho_\Lambda(x)=\frac{1-\rho}{1-\rho x},\quad x\in(0,1).$$
Moreover, one can check that
$$\vp_0^1 \frac{\rho_\Lambda(t)}{t-x}{\rm d}t=\rho_{\Lambda}(x)(\log(1-x)-\log(x)-\log(1-\rho)),$$
and therefore
$$\alpha(x) \rho_\Lambda(x)-\vp_0^1 \frac{\rho_\Lambda(t)}{t-x}{\rm d}t=\rho_\Lambda(x)\left(\sigma-\frac{\theta_1}{x}+\frac{\theta_0}{1-x}+\log(1-\rho)\right).$$
In addition, we have
$$f(x)=\rho_{\Lambda}(x)\left(\frac{\theta_0}{1-x}+\frac{\theta_0\rho}{1-\rho}-\frac{\theta_1}{x}+\theta_1\rho\right).$$
Since $\rho_\Lambda$ satisfies \eqref{CE}, we infer that $(\sigma +\log(1-\rho)-\theta_1\rho)(1-\rho)+\theta_0\rho=0$, and therefore $\rho$ is the unique solution to Eq. \eqref{cg3b} (see Lemma \ref{uniquerho}).
\subsection{Solving the moments of the stationary distribution for the Bolthausen--Sznitman model}
Let us assume that $\theta_0,\theta_1>0$. In this section we aim to obtain an explicit expression for the generating function of the coefficients $w_n:=P_n(R_\infty=0)=E[(1-X_\infty)^n]$, $n\in\Nb_0$, for the Bolthausen--Sznitman model. Note that $w_0=1$. Moreover, in this case
\[
\binom{n}{n-\ell+1}\lambda_{n,n-\ell+1}\ =\ \frac{n}{(n-\ell)(n-\ell+1)},
\qquad \ell\in[n-1].
\]
Thus, the characteristic equations \eqref{momrec} for the sequence $(w_n)_{n\ge 0}$ take the form
\begin{equation} \label{wnchar}
   \bigg(\theta+\sigma+1-\frac{1}{n}\bigg)w_n
   \ =\ \theta_1w_{n-1} + \sigma w_{n+1}
   + \sum_{\ell=1}^{n-1} \frac{w_\ell}{(n-\ell)(n-\ell+1)},\quad n\in\Nb.
\end{equation}
In order to solve these characteristic equations we introduce the generating function $w(s):=\sum_{n\ge 1}w_ns^n$. From $w_n\in [0,1]$ we conclude that the function $w$ has at least radius of convergence $1$. In the following we use a particular convolution property of the Bolthausen--Sznitman model. The same factorization property has been successfully used to determine the so-called hitting probabilities for the Bolthausen--Sznitman coalescent \cite{M14} and more generally (see \cite[Eq.~(4.6)]{M14b}) for the $\beta(2-\alpha,\alpha)$-coalescent with parameter $0<\alpha<2$.

Multiplying the last sum in (\ref{wnchar}) with $s^n$ and summing over all $n\ge 2$ leads to the factorization
\begin{eqnarray*}
   \sum_{n= 2}^\infty s^n\sum_{\ell=1}^{n-1}\frac{w_\ell}{(n-\ell)(n-\ell+1)}
   & = & \sum_{\ell= 1}^\infty w_\ell s^\ell \sum_{n=\ell+1}^\infty \frac{s^{n-\ell}}{(n-\ell)(n-\ell+1)}\\
   & = & \sum_{\ell=1}^\infty w_\ell s^{\ell} \sum_{k=1}^\infty\frac{s^k}{k(k+1)}
   \ = \ w(s)\varphi(s),
\end{eqnarray*}
where $\varphi$ is the probability generating function of a random variable $\eta$ with distribution $p_k:=P(\eta=k)=1/(k(k+1))$, $k\in\Nb$, i.e.
\[
\varphi(s)\ :=\ \sum_{k= 1}^\infty\frac{s^k}{k(k+1)}\ =\ 1 + \frac{(1-s)\log(1-s)}{s}.
\]
The occurrence of the generating function $\varphi$ is typical for the
Bolthausen--Sznitman model (see, for example, Drmota et al. \cite[p.~1409]{DIMR07} or H\'enard \cite[p.~3016]{He15} and comes from the fact that the (Siegmund dual) fixation line is a continuous-time branching process with offspring distribution $(p_k)_{k\ge 1}$.
Thus, multiplying (\ref{wnchar}) with $s^n$ and summing over all $n\in\Nb$ leads to
\[
   (\theta+\sigma+1)w(s) - \int_0^s \frac{w(t)}{t}\,{\rm d}t\\
   \ =\ \theta_1s(w(s)+1) + \sigma\bigg(\frac{w(s)}{s}-w_1\bigg)
+ \varphi(s)w(s).
\]
Taking the derivative with respect to $s$ shows that the generating function $w$ satisfies the inhomogeneous first order differential equation
\begin{equation*}
(\theta+\sigma+1)w'(s) - \frac{w(s)}{s}= \theta_1\big(w(s)+sw'(s)+1\big) + \sigma\bigg(\frac{w'(s)}{s}-\frac{w(s)}{s^2}\bigg)
   + \varphi'(s)w(s) + \varphi(s)w'(s).
\end{equation*}
Resorting leads to
\[
\bigg(\theta+\sigma+1-\theta_1s -\frac{\sigma}{s}-\varphi(s)\bigg)w'(s)\ =\ \bigg(\frac{1}{s}+\theta_1-\frac{\sigma}{s^2}+\varphi'(s)\bigg)w(s) + \theta_1.
\]
Plugging in $\varphi(s)=1+\frac{1-s}{s}\log(1-s)$ and $\varphi'(s)=-\frac{1}{s}-\frac{\log(1-s)}{s^2}$ yields
\[
   \bigg(
            \theta-\theta_1s-\sigma\frac{1-s}{s}-\frac{(1-s)\log(1-s)}{s}
         \bigg)w'(s)
   \ = \ \bigg(\theta_1-\frac{\sigma+\log(1-s)}{s^2}\bigg)w(s) + \theta_1
\]
or, in standard form,
\begin{equation} \label{dgl}
   w'(s)\ =\ a(s)w(s)+b(s),
\end{equation}
where
\[
a(s)\ :=\ \frac{\theta_1s^2-\sigma-\log(1-s)}{s\big(\theta s-\theta_1s^2-\sigma
(1-s)-(1-s)\log(1-s)\big)}
\]
and
\[
b(s)\ :=\ \frac{\theta_1s}{\theta s-\theta_1s^2-\sigma(1-s)-(1-s)\log(1-s)}.
\]
The function $a(\cdot)$ has a singularity
at $s_1:=0$ and at another point $s_2\in (0,1)$ being the root of the map
$h(s):=\theta s-\theta_1s^2-\sigma(1-s)-(1-s)\log(1-s)$ satisfying
$h(0)=-\sigma<0$ and $h(1)=\theta-\theta_1>0$. We can therefore choose some
fixed $s_0\in (0,s_2)$ and write a particular solution $w_0(\cdot)$ of the homogeneous differential equation $w'(s)=a(s)w(s)$ in the form
\[
w_0(s)\ =\ \exp\bigg(\int_{s_0}^s a(t){\rm d}t\bigg),\qquad s\in (0,s_2).
\]
The solution of the differential equation (\ref{dgl})
with initial value $w(0)=0$ is hence
\[
w(s)\ =\ w_0(s)\int_0^s \frac{b(t)}{w_0(t)}{\rm d}t,\qquad s\in (0,s_2).
\]
\begin{remark}Note that the Stieltjes's transform of the law of $1-X_\infty$ is expressed in terms of $w$ as 
$$\Ss(t):=E\left[\frac{1}{t-(1-X_\infty)}\right]=\frac{w\left(\frac{1}{t}\right)-1}{t},\quad t>1/s_2.$$
\end{remark}
\section{\texorpdfstring{A remark on the $\beta(3,1)$-model}{A remark on the beta(3,1)-model }}\label{s9}
There is another instance where the general method in Section \ref{s5} leads to a simple ordinary differential equation. This is given by the $\beta(3,1)$-model, i.e. the $\Lambda$-Wright--Fisher model with $\Lambda({\rm d}x)=3x^2 {\rm d}x$. Indeed, in this case
$c_{n,k}=3/(k+1)$, and hence $c_k(z)=\frac{3}{k+1}\frac{z-z^k}{1-z}$. Therefore,
$$\sum\limits_{k=2}^\infty p_k^\Lambda c_k(z)=\frac{3}{1-z}\left[z\sum\limits_{k=2}^\infty\frac{p_k^\Lambda}{k+1}-\sum\limits_{k=2}^\infty\frac{p_k^\Lambda}{k+1}z^k\right]=\frac{3}{1-z}\left[z\sum\limits_{k=1}^\infty\frac{p_k^\Lambda}{k+1}-\frac{1}{z}\int_0^z g_\Lambda(u){\rm d}u\right].$$
In addition, using Eq. \eqref{prlm} for $n=1$ we get
$$3\sum\limits_{k=1}^\infty\frac{p_k^\Lambda}{k+1}=\left(\frac{3}{2}+ \sigma +\theta_0\right)p_1^\Lambda-\theta_1 p_2^\Lambda-\theta_0.$$
Thus, Proposition \ref{MEI} yields
$$3\int_0^z g_\Lambda(u){\rm d}u-(\sigma z^2-(\sigma+\theta)z+\theta_1)g_\Lambda(z)=\left[\left(\left(\frac32+\sigma+\theta\right)p_1^\Lambda -\theta_1 p_2^\Lambda\right)z-\theta_1p_1^\Lambda\right]z.$$
Differentiating this equation, we deduce that $g_\Lambda$ solves the ordinary differential equation
$$(\sigma z^2-(\sigma+\theta)z+\theta_1)g_\Lambda'(z)+(2\sigma z-\sigma-\theta-3)g_\Lambda(z)=\theta_1 p_1^\Lambda-((3+2(\sigma+\theta))p_1^\Lambda-2\theta_1 p_2^\Lambda)z, \quad z\in D_*.$$
Explicit formulas for $g_\Lambda$ and $p_1^\Lambda$ can be obtained solving this equation with the boundary conditions $g_\Lambda(0)=0$ and $g_\Lambda(1)=1$. We leave the details to the reader.
\appendix
\section{Some special functions}\label{Ab}
The \textit{rising and falling factorials} $()^\uparrow$ and $()^\downarrow$ are defined as
$$(\alpha)_n^\uparrow:=\alpha(\alpha+1)\cdots(\alpha+n-1)\quad\textrm{and}\quad (\alpha)_n^\downarrow:=\alpha(\alpha-1)\cdots(\alpha-n+1),\quad n\in \Nb,$$
and $(\alpha)_0^\uparrow:=1=:(\alpha)_0^\downarrow$. 
The \textit{Gauss hypergeometric function} $\hf$ is the absolutely convergent power series
\begin{equation}\label{def21}
\pFq{2}{1}{\alpha;,\beta}{\gamma}{z}:=\sum\limits_{k=0}^\infty \frac{(\alpha)_k^\uparrow (\beta)_k^\uparrow}{(\gamma)_k^\uparrow}\frac{z^k}{k!},\qquad z\in D,
\end{equation}
where $\alpha,\beta,\gamma$ are parameters which can take real or complex values (provided that $\gamma\notin -\Nb_0$). The function $\hf$ admits the integral representation (see \cite[p.~239, Eq.~(9.1.4)]{leb})
\begin{equation}\label{irf21}
 \pFq{2}{1}{\alpha;,\beta}{\gamma}{z}=\frac{\Gamma(\gamma)}{\Gamma(\beta)\Gamma(\gamma-\beta)}\int_0^1 t^{\beta-1} (1-t)^{\gamma-\beta-1} (1-zt)^{-\alpha}{\rm d}t,\qquad \textrm{Re}(\gamma)> \textrm{Re}(\beta)>0.
\end{equation}
The \textit{confluent hypergeometric function} $\chf$ is the absolutely convergent power series
\begin{equation}\label{def11}
\pFq{1}{1}{\alpha}{\gamma}{z}:=\sum\limits_{k=0}^\infty \frac{(\alpha)_k^\uparrow}{(\gamma)_k^\uparrow}\frac{z^k}{k!},\qquad z\in \Cb,
\end{equation}
where $\alpha,\gamma$ are parameters which can take real or complex values (provided that $\gamma\notin -\Nb_0$). The function $\chf$ admits the integral representation (see \cite[p.~266, Eq.~(9.11.1)]{leb}).
\begin{equation}\label{irf11}
 \pFq{1}{1}{\alpha,}{\gamma}{z}=\frac{\Gamma(\gamma)}{\Gamma(\alpha)\Gamma(\gamma-\alpha)}\int_0^1 e^{tz}t^{\alpha-1} (1-t)^{\gamma-\alpha-1}{\rm d}t,,\qquad \textrm{Re}(\gamma)> \textrm{Re}(\alpha)>0.
\end{equation}
Similarly, the \textit{generalised hypergeometric function} $\HF$ is the power series
\begin{equation}\label{def31}
\pFq{3}{2}{\alpha;,\beta;, \gamma}{\delta;,\rho}{z}:=\sum\limits_{k=0}^\infty \frac{(\alpha)_k^\uparrow (\beta)_k^\uparrow (\gamma)_k^\uparrow}{(\delta)_k^\uparrow (\rho)_k^\uparrow}\frac{z^k}{k!},\qquad z\in D,
\end{equation}
where $\delta,\rho\notin -\Nb_0$. The functions $\hf$ and $\HF$ can be defined outside the disk $D$ by using analytic continuation. Moreover, when $\alpha$ or $\beta$ are nonpositive integers, $\hf$ reduces to a polynomial, and therefore, is well defined in the whole complex plane. The same holds for $\HF$ when $\alpha,\beta$ or $\gamma$ are nonpositive integers.

A natural two variables generalisation of the Gauss hypergeometric function is given by the \textit{Appell function} $F_1$ (see \cite{A82}), which is given by
\begin{equation}\label{def1}
\pFq{}{1}{a\,;,b\,;,c}{d}{z\,;,w}:=\sum\limits_{m=0}^\infty\sum\limits_{n=0}^\infty \frac{(a)_{m+n}^\uparrow (b)_m^{\uparrow} (c)_n^\uparrow}{(d)_{m+n}^\uparrow} \frac{z^m}{m!}\frac{w^n}{n!},\qquad z,w\in D,
\end{equation}
where $d$ is a non-positive integer. There are four types of Appell functions, but we focus here only on $F_1$. The function $F_1$ can be expressed in terms of $\hf$ functions as follows
$$\pFq{}{1}{a;,b;,c}{d}{z;,w}=\sum\limits_{m=0}^\infty\frac{(a)_{m}^\uparrow (b)_m^{\uparrow}}{(d)_{m}^\uparrow}\frac{z^m}{m!}\pFq{2}{1}{a+m;,c}{d+m}{w}.$$
The function $F_1$ admits the integral representation (see \cite[Eq.~(24)]{A82})
\begin{equation}\label{irf1}
 \pFq{}{1}{a;,b;,c}{d}{z;,w}=\frac{\Gamma(d)}{\Gamma(a)\Gamma(d-a)}\int_0^1 t^{a-1} (1-t)^{d-a-1} (1-zt)^{-b}(1-wt)^{-c}{\rm d}t,\quad\textrm{Re$(d)>$ Re$(a)>0$}.
\end{equation}
\section{Some integral identities}\label{Aa}
For $\alpha,\beta,\gamma,\nu>0$, we define $I(\alpha,\beta,\gamma,\nu;z):=\int_0^z y^\alpha(1-y)^\beta {(y+\nu)}^{-\gamma}{\rm d}y$, $z\in \Cb\setminus\Rb_-.$

\begin{lemma}\label{lA1} For every $z\in\Cb\setminus\Rb_-$, we have
$$I(\alpha,\beta,\gamma,\nu;z)= \nu^{\alpha-\gamma+1}\left(\frac{z}{z+\nu}\right)^{1+\alpha}\int_0^1 t^\alpha\left(1-\frac{z}{z+\nu}\,t\right)^{\gamma-\alpha-\beta-2}\left(1-\frac{(1+\nu)z}{z+\nu}\,t\right)^\beta {\rm d}t.$$ 
\end{lemma}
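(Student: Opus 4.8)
The plan is to evaluate $I(\alpha,\beta,\gamma,\nu;z)$ by the M\"obius substitution
\[
y\;=\;\frac{\nu z\,t}{(z+\nu)-z t},\qquad t\in[0,1],
\]
whose inverse is $t=(z+\nu)y/\bigl(z(y+\nu)\bigr)$. This map fixes $0$ and sends $t=1$ to $y=z$, so it carries $[0,1]$ onto a path from $0$ to $z$, namely the circular arc through $0$, $z$ and the point $-\nu=y(\infty)$ (a line segment when $z$ is real). First I would check that, for $z\in\Cb\setminus\Rb_-$, this arc avoids the branch cuts of the integrand $y^{\alpha}(1-y)^{\beta}(y+\nu)^{-\gamma}$, so that it is an admissible path for $\int_0^z$ and the principal fractional powers occurring after the substitution agree with the ones intended in the definition of $I$. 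This is immediate when $z\in D_*$ and $\nu=1/s>0$, the regime actually used in Theorems \ref{mainMM} and \ref{mainWF}, since there the segment from $0$ to $z$ already lies in $D\setminus\Rb_-$.

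Next I would record the pullbacks under this substitution. A short computation gives
\[
y+\nu=\frac{\nu(z+\nu)}{(z+\nu)-zt},\qquad
1-y=\frac{1-\frac{(1+\nu)z}{z+\nu}\,t}{1-\frac{z}{z+\nu}\,t},\qquad
\frac{{\rm d}y}{{\rm d}t}=\frac{\nu z}{(z+\nu)\bigl(1-\frac{z}{z+\nu}\,t\bigr)^{2}},
\]
together with $(z+\nu)-zt=(z+\nu)\bigl(1-\frac{z}{z+\nu}t\bigr)$, hence $y=\nu z\,t\big/\bigl((z+\nu)(1-\frac{z}{z+\nu}t)\bigr)$. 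Plugging these into $I=\int_0^1 y^{\alpha}(1-y)^{\beta}(y+\nu)^{-\gamma}\,({\rm d}y/{\rm d}t)\,{\rm d}t$ and collecting terms: the powers of $\nu$ combine to $\nu^{\alpha}\cdot\nu^{-\gamma}\cdot\nu=\nu^{\alpha-\gamma+1}$; the powers of $z$ and of $z+\nu$ combine to $z^{\alpha+1}(z+\nu)^{-\alpha-1}=\bigl(z/(z+\nu)\bigr)^{\alpha+1}$; the exponents of the factor $1-\frac{z}{z+\nu}t$ add up to $-\alpha-\beta+\gamma-2$; and what remains inside the integral is $t^{\alpha}\bigl(1-\frac{(1+\nu)z}{z+\nu}t\bigr)^{\beta}$. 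Integrating in $t$ over $[0,1]$ then yields exactly the claimed identity.

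The only genuinely delicate point is the branch and path bookkeeping of the first step; the algebraic identities for the pullbacks and the collection of the exponents are routine. I would therefore carry out the argument for $z\in D_*$, where every factor is manifestly unambiguous, and obtain the general statement by analytic continuation in $z$ along paths avoiding the cuts, wherever both sides are defined. Finally, comparing the resulting $[0,1]$-integral with the Euler-type integral representation \eqref{irf1} of the Appell function $F_1$ (with parameters $a=\alpha+1$, $b=\alpha+\beta+2-\gamma$, $c=-\beta$, $d=\alpha+2$, so that the $(1-t)$-factor drops out) rewrites $I$ in terms of $F_1$, which is essentially the content of Corollary \ref{IvsF1}.
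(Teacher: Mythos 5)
Your proposal is correct and follows essentially the same route as the paper, which proves Lemma \ref{lA1} precisely by the change of variable $t=(z+\nu)y/(z(y+\nu))$ (your substitution $y=\nu z t/((z+\nu)-zt)$ is its inverse), and your pullback computations and exponent bookkeeping check out. The extra care you take with the image path and branch cuts, and the final remark linking the result to \eqref{irf1} and Corollary \ref{IvsF1}, are sensible additions but not a different method.
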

\begin{proof}
This follows directly by making the change of variable $t=(z+\nu)y/(z(y+\nu))$.
\end{proof}
\begin{corollary}\label{IvsF21}
We have
$$I(\alpha,\beta,\gamma,\nu;1)=\frac{\nu^{1+\alpha-\gamma}}{(1+\nu)^{1+\alpha}}\frac{\Gamma(1+\alpha)\Gamma(1+\beta)}{\Gamma(2+\alpha+\beta)}\,\pFq{2}{1}{2+\alpha+\beta-\gamma;,1+\alpha}{2+\alpha+\beta}{\frac{1}{1+\nu}  }.$$
\end{corollary}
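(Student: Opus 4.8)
The plan is to obtain this as an immediate specialization of Lemma \ref{lA1} at $z=1$, followed by recognition of the resulting one-dimensional integral as an instance of the Euler integral representation \eqref{irf21} of $\hf$.

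First I would put $z=1$ in the identity of Lemma \ref{lA1}. The prefactor $\nu^{\alpha-\gamma+1}\big(\tfrac{z}{z+\nu}\big)^{1+\alpha}$ becomes $\nu^{1+\alpha-\gamma}(1+\nu)^{-(1+\alpha)}$. Inside the integrand, the factor $\big(1-\tfrac{(1+\nu)z}{z+\nu}t\big)^{\beta}$ collapses, since $\tfrac{(1+\nu)\cdot 1}{1+\nu}=1$, to $(1-t)^{\beta}$, while $\big(1-\tfrac{z}{z+\nu}t\big)^{\gamma-\alpha-\beta-2}$ becomes $\big(1-\tfrac{t}{1+\nu}\big)^{\gamma-\alpha-\beta-2}$. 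This yields
$$
I(\alpha,\beta,\gamma,\nu;1)=\frac{\nu^{1+\alpha-\gamma}}{(1+\nu)^{1+\alpha}}\int_0^1 t^{\alpha}(1-t)^{\beta}\Big(1-\tfrac{t}{1+\nu}\Big)^{\gamma-\alpha-\beta-2}{\rm d}t.
$$

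Next I would match the integral against \eqref{irf21}: comparing exponents gives $b-1=\alpha$, $c-b-1=\beta$ and $-a=\gamma-\alpha-\beta-2$, i.e. $b=1+\alpha$, $c=2+\alpha+\beta$, $a=2+\alpha+\beta-\gamma$, with argument $w=\tfrac{1}{1+\nu}$. Since $\alpha,\beta>0$ we have $\operatorname{Re}(c)>\operatorname{Re}(b)>0$, so \eqref{irf21} applies and produces the normalizing constant $\Gamma(1+\alpha)\Gamma(1+\beta)/\Gamma(2+\alpha+\beta)$; multiplying back by the prefactor gives exactly
$$
I(\alpha,\beta,\gamma,\nu;1)=\frac{\nu^{1+\alpha-\gamma}}{(1+\nu)^{1+\alpha}}\,\frac{\Gamma(1+\alpha)\Gamma(1+\beta)}{\Gamma(2+\alpha+\beta)}\,\pFq{2}{1}{2+\alpha+\beta-\gamma;,1+\alpha}{2+\alpha+\beta}{\frac{1}{1+\nu}}.
$$

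There is essentially no obstacle: the argument is purely a change-of-variables/pattern-matching computation. The only point worth a line is that the series \eqref{def21} is being evaluated at $w=\tfrac{1}{1+\nu}\in(0,1)$, which lies inside the disk of convergence because $\nu>0$; hence no appeal to analytic continuation is needed. (Were one to want the formula for complex $\nu$ with $1+\nu\notin(-\infty,0]$, the same steps combined with analytic continuation of $\hf$ would suffice, but the stated hypotheses make this unnecessary.)
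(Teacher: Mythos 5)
Your proposal is correct and follows the paper's own route exactly: specialize Lemma \ref{lA1} at $z=1$ and identify the resulting integral with the Euler representation \eqref{irf21}, with parameters $a=2+\alpha+\beta-\gamma$, $b=1+\alpha$, $c=2+\alpha+\beta$ and argument $1/(1+\nu)$. The remark that $\alpha,\beta>0$ guarantees $\mathrm{Re}(c)>\mathrm{Re}(b)>0$ and that the argument lies in $(0,1)$ is a correct (if implicit in the paper) verification of the hypotheses of \eqref{irf21}.
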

\begin{proof}
This follows directly from Lemma \ref{lA1} and  Eq. \eqref{irf21}.
\end{proof}
Let $D_\nu:=\{z\in D_*: |z|<\nu/\sqrt{\nu^2+2\nu}\}.$ One can easily check that for all $z\in D_\nu$, $(1+\nu)z/(z+\nu)\in D$. 
\begin{corollary}\label{IvsF1}
 For all $z\in D_\nu$, we have
 $$I(\alpha,\beta,\gamma,\nu;z)= \frac{\nu^{\alpha-\gamma+1}}{1+\alpha}\left(\frac{z}{z+\nu}\right)^{1+\alpha}\,\pFq{}{1}{1+\alpha\,;,2+\alpha+\beta-\gamma\,;,-\beta}{2+\alpha}{\frac{z}{z+\nu}\,;,\frac{(1+\nu)z}{z+\nu}}.$$
\end{corollary}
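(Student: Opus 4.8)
The plan is to derive Corollary \ref{IvsF1} from Lemma \ref{lA1} together with the Euler-type integral representation \eqref{irf1} of the Appell function $F_1$, following the same strategy as in the proof of Corollary \ref{IvsF21}. First I would invoke Lemma \ref{lA1} to write, for $z\in D_\nu$,
\[
I(\alpha,\beta,\gamma,\nu;z)=\nu^{\alpha-\gamma+1}\Big(\tfrac{z}{z+\nu}\Big)^{1+\alpha}\int_0^1 t^\alpha\Big(1-\tfrac{z}{z+\nu}\,t\Big)^{\gamma-\alpha-\beta-2}\Big(1-\tfrac{(1+\nu)z}{z+\nu}\,t\Big)^\beta\,{\rm d}t .
\]
Setting $Z:=z/(z+\nu)$ and $W:=(1+\nu)z/(z+\nu)$, the remaining integral is exactly of the shape $\int_0^1 t^{a-1}(1-t)^{d-a-1}(1-Zt)^{-b}(1-Wt)^{-c}\,{\rm d}t$ appearing in \eqref{irf1}, with the parameter identification $a=1+\alpha$, $d=2+\alpha$ (so that the exponent $d-a-1$ vanishes and the absent factor $(1-t)^{d-a-1}$ is $1$), $b=2+\alpha+\beta-\gamma$ and $c=-\beta$.

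Since $\alpha>0$, the hypothesis $\mathrm{Re}(d)>\mathrm{Re}(a)>0$ of \eqref{irf1} holds, so \eqref{irf1} applies and yields
\[
\int_0^1 t^\alpha(1-Zt)^{\gamma-\alpha-\beta-2}(1-Wt)^\beta\,{\rm d}t=\frac{\Gamma(1+\alpha)\Gamma(1)}{\Gamma(2+\alpha)}\;\pFq{}{1}{1+\alpha\,;,2+\alpha+\beta-\gamma\,;,-\beta}{2+\alpha}{Z\,;,W}=\frac{1}{1+\alpha}\;\pFq{}{1}{1+\alpha\,;,2+\alpha+\beta-\gamma\,;,-\beta}{2+\alpha}{Z\,;,W},
\]
using $\Gamma(1+\alpha)/\Gamma(2+\alpha)=1/(1+\alpha)$ and $\Gamma(1)=1$. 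Multiplying through by $\nu^{\alpha-\gamma+1}Z^{1+\alpha}$ and unfolding the definitions of $Z$ and $W$ gives precisely the asserted formula.

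The only point that requires attention is the applicability of \eqref{irf1}, i.e. the convergence of the defining series \eqref{def1} of $F_1$, which demands that both arguments $Z$ and $W$ lie in the open unit disk $D$; this is exactly why the statement is confined to $z\in D_\nu$. As already observed in the text immediately preceding the corollary, $z\in D_\nu$ forces $W=(1+\nu)z/(z+\nu)\in D$, and since $|Z|=|W|/(1+\nu)<|W|<1$ we obtain $Z\in D$ for free. The power $(z/(z+\nu))^{1+\alpha}$ is taken with the branch already fixed in Lemma \ref{lA1}, which is unambiguous on $D_*\supset D_\nu$. Beyond this bookkeeping the argument is a pure substitution, so I do not expect any genuine obstacle.
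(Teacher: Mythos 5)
Your proposal is correct and follows exactly the route of the paper: apply Lemma \ref{lA1} and then recognise the resulting integral as the Euler-type representation \eqref{irf1} of $F_1$ with $a=1+\alpha$, $d=2+\alpha$, $b=2+\alpha+\beta-\gamma$, $c=-\beta$. Your additional remarks on the condition $\mathrm{Re}(d)>\mathrm{Re}(a)>0$ and on both arguments lying in $D$ for $z\in D_\nu$ are accurate bookkeeping that the paper leaves implicit.
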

\begin{proof}
This follows directly from Lemma \ref{lA1} and Eq. \eqref{irf1}.
\end{proof}
\footnotesize{{\bf{Acknowledgments}} The first author gratefully acknowledges financial support from the Deutsche Forschungsgemeinschaft via CRC 1283 Taming Uncertainty, Project C1.}


\bibliographystyle{abbrv}
\bibliography{reference}
\end{document}